\numberwithin{equation}{section}
\newcommand{\xb}{\bar{x}}
\newcommand{\yb}{\bar{y}}
\newcommand{\zb}{\bar{z}}
\newcommand{\wb}{\bar{w}}
\newcommand{\zetab}{\bar{\zeta}}
\newcommand{\fb}{\bar{f}}
\newcommand{\hb}{\bar{h}}
\newcommand{\ub}{\bar{u}}
\newcommand{\Xb}{\bar{X}}
\newcommand{\Yb}{\bar{Y}}
\newcommand{\Zb}{\bar{Z}}
\newcommand{\Wb}{\bar{W}}
\newcommand{\Db}{\bar{D}}
\newcommand{\Jb}{\bar{\J}}
\newcommand{\Gammab}{\bar{\Gamma}}
\newcommand{\p}{\mathfrak{p}}
\newcommand{\J}{\mathcal{J}}
\newcommand{\ZZ}{\mathbb{Z}}
\newcommand{\FF}{\mathbb{F}}
\newcommand{\QQ}{\mathbb{Q}}
\newcommand{\OO}{\mathcal O}
\newcommand{\Kb}{{K^\alg}}
\newcommand{\et}{{\rm et}}
\newcommand{\alg}{{\rm alg}}
\newcommand{\nr}{{\rm ur}}
\newcommand{\reg}{{\rm reg}}
\newcommand{\alphab}{\bar{\alpha}}
\newcommand{\lambdab}{\bar{\lambda}}
\newcommand{\phib}{\bar{\phi}}
\newcommand{\liso}{\stackrel{\sim}{\to}}
\newcommand{\lcm}{\mathop{\rm lcm}}
\begin{document}

\title[$L$-functions and semistable reduction]{Computing $L$-functions and semistable reduction of superelliptic
  curves}

\author{Irene I.\ Bouw, Stefan Wewers}

\begin{abstract}
  We give an explicit description of the stable reduction of superelliptic
  curves of the form $y^n=f(x)$ at primes $\p$ whose residue characteristic is
  prime to the exponent $n$. We then use this description to compute the local
  $L$-factor  and the exponent of conductor at $\p$ of the curve. 

\noindent 2010 {\em Mathematics subject Classification}. Primary
11G40. Secondary: 14G10, 11G20.
\end{abstract}

\maketitle

%%%%%%%%%%%%%%%%%%%%%%%%%%%%%%%%%
\section{Introduction}
%%%%%%%%%%%%%%%%%%%%%%%%%%%%%%%%%

\subsection{}

Let $Y$ be a smooth projective curve of genus $g\geq 2$ over a number field
$K$. The {\em $L$-function} of $Y$ is defined as an Euler product
\[
    L(Y,s):=\prod_\p L_\p(Y,s),
\]
where $\p$ ranges over the prime ideals of $K$. The local $L$-factor
$L_\p(Y,s)$ is defined as follows. Choose a decomposition group
$D_\p\subset\Gal(\Kb/K)$ of $\p$. Let $I_\p\subset D_\p$ denote the inertia
subgroup and let $\sigma_\p\in D_\p$ be an arithmetic Frobenius element (i.e.\
$\sigma_\p(\alpha)\equiv\alpha^{{\rm N}\p} \pmod{\p}$). Then
\[
      L_\p(Y,s):=\det\big(1-({\rm N}\p)^{-s}\sigma_\p^{-1}|V^{I_\p}\big)^{-1},
\]
where 
\[
     V:=H^1_\et(Y\otimes_K\Kb,\Q_\ell)
\]
is the first \'etale cohomology group of $Y$ (for some auxiliary prime
$\ell$ distinct from the residue characteristic of $\p$).

Another arithmetic invariant of $Y$ closely related to $L(Y,s)$ is the
{\em conductor of the $L$-function}. Similar to $L(Y,s)$, it is
defined as a product over local factors (times a power of the
discriminant $\delta_K$ of $K$):
\[
      N:=\delta_K^{2g}\cdot\prod_\p ({\rm N}\p)^{f_\p},
\]
where $f_\p$ is a nonnegative integer called the {\em exponent of
  conductor} at $\p$. The integer $f_\p$ measures the {\em
  ramification} of the Galois module $V$ at the prime $\p$. See
\S~\ref{etale1} or \cite{SerreZeta}, \S~ 2 for a precise definition.

Many spectacular conjectures and theorems concern these $L$-functions.  For
instance, it is conjectured that $L(Y,s)$ has a meromorphic continuation to
the entire complex plane, and a functional equation of the form
\begin{equation} \label{funceq}
      \Lambda(Y,s) = \pm\Lambda(Y,2-s),
\end{equation}
where 
\[
   \Lambda(Y,s):=N^{s/2}(2\pi)^{-gs}\Gamma(s)^gL(Y,s).
\]
This conjecture can be proved for certain special curves related to
automorphic forms (like modular curves) and, as a consequence of the
Taniyama--Shimura conjecture, for elliptic curves over $\Q$. Besides that,
very little is known.

\subsection{}

One motivation for this paper is the question how to compute the
defining series for $L(Y,s)$ and the conductor $N$ explicitly for a
given curve $Y$. By definition, this is a local problem at each prime
ideal $\p$. So we fix $\p$ and aim at computing $L_\p(Y,s)$ and
$f_\p$. Note that the residue field of $\p$ is the finite field $\F_q$
with $q={\rm\mathop N}(\p)$ elements. To study this problem, we
construct suitable $\OO_K$-{\em models} of $Y$. Recall that an
$\OO_K$-{\em model} of $Y$ is a flat and proper $\OO_K$-scheme $\cY$
with generic fiber $Y$.

Assume first that $Y$ has {\em good reduction} at $\p$. This means
that there exists an $\OO_K$-model $\cY$ whose special fiber
$\Yb=\Yb_\p$  at $\p$ is a smooth $\F_q$-scheme. Standard
theorems in \'etale cohomology show that the action of $\Gal(\Kb/K)$
on $V=H^1_\et(Y_{\Kb},\Q_\ell)$ is unramified at $\p$ (i.e.\ $I_\p$
acts trivially) and therefore the exponent of conductor vanishes,
$f_\p=0$. Furthermore, the local $L$-factor $L_\p(Y,s)$ is equal to
the inverse of the denominator of the zeta function of $\Yb$, i.e.
\[
      Z(\Yb,q^{-s}) = \frac{L_\p(Y,s)^{-1}}{(1-q^{-s})(1-q^{1-s})},
\]
where 
\[
   Z(\Yb,T):=\exp\bigg(\sum_{n\geq 1} |\Yb(\F_{q^n})|\cdot \frac{T^n}{n}\bigg).
\]
To compute $L_\p(Y,s)$ for small prime ideals we simply need to
count the number of $\F_{q^n}$-rational points on $\Yb$, for
$n=1,\ldots,g$. 

If $Y$ has bad reduction it is much harder to compute $L_\p(Y,s)$ and
$f_\p$. To our knowledge, there are essentially three ways to
proceed.
\begin{enumerate}
\item
  Compute a {\em regular model} of $Y$ at $\p$. 
\item
  Compute the {\em semistable reduction} of $Y$ at $\p$. 
\item
  Guess the local $L$-factors at {\em all} primes of bad reduction, and then
  verify this guess via the functional equation for $L(Y,s)$. 
\end{enumerate}
All three methods have certain advantages and drawbacks, and it is often a
combination of them which works best. In this paper we would like to advertise
method (2), by demonstrating its simplicity and usefulness in a large class of
examples (superelliptic curves). 

\subsection{}

Before we go into more details of methods (1) and (2), let us briefly describe
method (3). Let $\p_1,\ldots,\p_r$ be the prime ideals of the number field $K$
where $Y$ has bad reduction. One can show the following.
\begin{itemize}
\item For $i=1,\ldots,r$ there are only finitely many possible choices for the
  local $L$-factor $L_{\p_i}(Y,s)$ and the exponent $f_{\p_i}$. In fact, the
  set of all choices depends only on the norm $q_i={\rm N}\p_i$ and the genus
  $g$.
\item
  There is at most a unique choice for the conductor $N$ and the local
  $L$-factors $L_{\p_i}(Y,s)$ at the bad primes $\p_i$ such that the
  $L$-function 
  \[
        L(Y,s):=\prod_\p L_\p(Y,s)
  \]
  satisfies the functional equation \eqref{funceq}. 
\end{itemize}
This suggests the following strategy to determine $L(Y,s)$.
\begin{itemize}
\item
  Guess the conductor $N=\prod_i q_i^{f_i}$ and the local $L$-factors
  $L_{\p_i}(Y,s)$ at the bad primes $\p_i$.
\item Compute $L_\p(Y,s)$ for all good primes $\p$ with ${\rm N}\p\leq
  C$ for some sufficiently large constant $C$. The constant $C$ should be
  chosen large enough, so that knowing $L_\p(Y, s)$ for all primes
  with ${\rm N}\p\leq C$ yields a sufficiently good numerical
  approximation of the $L$-function. If $C$ is not too large,
  computing $L_\p(Y,s)$ for all such good primes can be done
  efficiently by simple point counting.
\item
  Check numerically whether $L(Y,s):=\prod_\p L_\p(Y,s)$ satisfies the
  functional equation \eqref{funceq}. By \cite{Dokchitser04}, we need to
  choose $C\sim N^{1/2}$. 
\end{itemize}
In practice, this can be done if $N\sim 10^{15}$. See
e.g.\ \cite{DokchitserdeJeuZagier}. 

An obvious drawback of this method is that one can never prove that the guess
one has made is correct.

\subsection{Regular models}\label{regsec}

We now describe the first method. Fix a prime ideal $\p$ of $K$. Since the
local $L$-factor $L_\p(Y,s)$ and the exponent $f_\p$ only depend on the base
change of $Y$ to the completion $\hat{K}_\p$, we may and will from now on
assume that $K$ is a finite extension of $\Q_p$. We use the notation
$L(Y/K,s)$ and $f_{Y/K}$ to denote the local $L$-factor and the exponent of
conductor. We write $\FF_K$ for the residue
field of $K$, which is a finite field of characteristic $p$.

We may assume that $Y$ has bad reduction. By resolution of
  singularities of two-dimensional schemes, there exists a {\em
  regular model} $\cY^\reg$, i.e.\ a flat and proper $\OO_K$-model of
$Y$ which is regular. Since we assume $g\geq 2$ we may also assume
that $\cY^\reg$ is the minimal regular model. Let $\Yb^{\rm reg}$
denote the special fiber of $\cY^\reg$.  Under an additional
(relatively mild) assumption, it is still true that $L(Y/K,s)$ is the
inverse of the denominator of the zeta function of the special fiber
$\Yb^{\rm reg}$ of $\cY^{\rm reg}$ as in the smooth case (see
Proposition \ref{ssredprop} below).  Therefore, $L(Y/K,s)$ can be
computed from $\Yb^\reg$ by point counting.

By a result of Saito (\cite{Saito88}) it should also be possible to compute
$f_{Y/K}$ from $\cY^{\rm reg}$. For curves of genus $2$ this is achieved in
\cite{Liu94}, and these methods probably extend to arbitrary hyperelliptic
curves (see \cite{Liu96}). We are not aware of any attempt to explicitly
compute $f_{Y/K}$ for nonhyperelliptic curves, using regular models.

Finding a regular model $\cY^\reg$ can be computationally
challenging. The computer algebra system {\sc Magma} has a build-in function
to compute regular models of curves of genus $g\geq 2$, but it seems
that there are still many restrictions on the types of curves for
which it works. A similar function which should overcome these
limitations is being prepared in {\sc Singular}.

\subsection{Semistable reduction}\label{sssec}
   We now describe the second
 method. For precise definitions and more details we refer to
 \S~\ref{etale3}. Since we assume that $g\geq 2$, the curve
 $Y_L:=Y\otimes_KL$ admits a stable model ${\mathcal Y}^{\rm stab}$
 over a finite extension $L/K$. The stable model ${\mathcal Y}^{\rm
   stab}$ is minimal with the property that its special fiber
 $\Yb^{\rm stab}$ has at most ordinary double points as
 singularities. However, $\cY^{\rm stab}$ need not be regular.

 We may assume that $L/K$ is Galois.  The Galois group
 $\Gamma:=\Gal(L/K)$ has a natural { semilinear} action on $\cY^{\rm
   stab}$. Restricting this action to the special fiber we obtain a
 natural, semilinear action of $\Gamma$ on the special fiber $\Yb^{\rm
   stab}$ of ${\mathcal Y}^{\rm stab}$.  The quotient scheme $\Zb^{\rm
   inert}:=\Yb^{\rm stab}/\Gamma$ is a semistable curve over the
 residue field $\FF_K$ of $K$. We call it the {\em inertial reduction} of
 $Y$. The following result is certainly known
 to experts, but not so easy to find in the literature.

\begin{theorem} \label{introthm} 
  The stable reduction $\Yb^{\rm stab}$,
  together with its natural $\Gamma$-action, determines the local $L$-factor
  $L(Y/K,s)$ and the exponent of conductor $f_{Y/K}$. In particular:
  \begin{enumerate}
  \item The local $L$-factor $L(Y/K,s)$ is the inverse of the denominator of
    the zeta function of  $\Zb^{\rm inert}$
    (which may be computed by point counting).
  \item
    If, moreover, $Y$ has semistable reduction over a tamely ramified
    extension of $K$ then 
    \[
           f_{Y/K} = 2g(Y) - \dim H^1_\et(\Zb_{k}^{\rm inert},\QQ_{\ell}).
    \]
Here $k$ is the algebraic closure of $\FF_K$.
  \end{enumerate}
\end{theorem}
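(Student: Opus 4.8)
The plan is to realize everything through the weight-monodromy / $\ell$-adic picture and the semistable reduction theorem, reducing the computation of $L(Y/K,s)$ and $f_{Y/K}$ to the action of $\Gamma$ on the stable reduction $\Yb^{\rm stab}$. First I would recall the structure of $V=H^1_\et(Y_{\Kb},\QQ_\ell)$ as a module over the decomposition group $D=\Gal(\Kb/K)$: since $Y$ acquires stable reduction over the Galois extension $L/K$, the inertia $I\subset D$ acts through its image in $\Gamma=\Gal(L/K)$ on the part of $V$ captured by the reduction, and the wild inertia $P$ acts through a finite quotient. The key input is that for a curve with semistable reduction over $L$, there is a $\Gal(\Kb/L)$-equivariant (in fact $D$-equivariant, with the $I$-action factoring through $\Gamma$) description of $V$ in terms of the special fiber $\Yb^{\rm stab}$: the weight filtration has graded pieces built from $H^1$ of the normalization of the components, the edges of the dual graph (the toric part), and the vertices. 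Concretely, $\mathrm{gr}^W_0 V$ is the $H^1$ of the dual graph tensored with $\QQ_\ell$, $\mathrm{gr}^W_1 V = \bigoplus_i H^1_\et(\Xb_i, \QQ_\ell)$ over the components $\Xb_i$ of $\Yb^{\rm stab}$, and $\mathrm{gr}^W_2 V$ is the $\QQ_\ell(-1)$-dual of $\mathrm{gr}^W_0$; the full $D$-action is semilinear-compatible with these identifications, with Frobenius acting via its action on the special fiber.

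Next I would take inertia invariants. Because $I$ acts on $\Yb^{\rm stab}$ through $\Gamma$, we have $V^{I} \subset V^{\Gal(\Kb/L)}$, and the monodromy operator $N$ (the logarithm of a generator of tame inertia over $L$) together with the $\Gamma$-action pins down $V^{I_\p}$ inside the weight-graded pieces. The crucial reduction is that taking $\Gamma$-invariants of $\Yb^{\rm stab}$ — i.e.\ passing to the quotient semistable curve $\Zb^{\rm inert}=\Yb^{\rm stab}/\Gamma$ over $\FF_K$ — computes exactly the Frobenius-module $\mathrm{gr}^W_1(V^I) \oplus (\text{toric and cotoric parts surviving } N \text{ and } \Gamma)$ that appears in the Euler factor. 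More precisely, by the Lefschetz trace formula the denominator of $Z(\Zb^{\rm inert}, q^{-s})$ is $\det(1 - q^{-s}\sigma^{-1} \mid H^1_\et(\Zb^{\rm inert}_k, \QQ_\ell))$, and I would show this $H^1$ is canonically the subspace of $V$ cut out by the local $L$-factor. This gives part (1). For the denominator identification I would invoke (or quote from) Proposition~\ref{ssredprop}, which handles precisely the passage from a semistable special fiber to the denominator of the zeta function; the only extra point is the descent along $\Gamma$, which is harmless because quotients of semistable curves by finite groups (in the tame-on-$\ell$ setting) are semistable and cohomology commutes with such quotients after $\otimes\QQ_\ell$.

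For part (2), I would use the standard formula $f_{Y/K} = \epsilon + \delta$, where $\epsilon = \dim V - \dim V^{I_\p}$ is the tame part of the conductor and $\delta$ is the Swan conductor. The tameness hypothesis — $Y$ acquires semistable reduction over a tamely ramified extension of $K$ — forces the wild inertia $P$ to act trivially on $V$ (by the criterion that semistable reduction over a tame extension is equivalent to $P$ acting unipotently, hence trivially since $P$ is a pro-$p$ group acting on a $\QQ_\ell$-space through a finite quotient of order prime to... — actually one must argue via the Néron–Ogg–Shafarevich-type criterion: stable reduction over a tame extension $\Leftrightarrow$ $\rho(P)$ is trivial), so $\delta = 0$. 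Then $f_{Y/K} = \dim V - \dim V^{I_\p} = 2g(Y) - \dim V^{I_\p}$, and by part (1)'s analysis $\dim V^{I_\p} = \dim H^1_\et(\Zb^{\rm inert}_k, \QQ_\ell)$ after accounting for the toric contribution: here I must be careful that $V^{I}$ and $H^1$ of the quotient curve agree in dimension, which uses that over a tame extension the monodromy operator, acting on a curve, interacts with $\Gamma$-invariants in the expected way. The main obstacle I anticipate is exactly this bookkeeping in part (2): disentangling the weight-$0$ (toric) and weight-$2$ graded pieces under the combined action of $N$ and $\Gamma$, and checking that $\dim V^{I}$ — not merely $\dim V^{\Gal(\Kb/L)}$ — equals $\dim H^1_\et(\Zb^{\rm inert}_k,\QQ_\ell)$; the tameness hypothesis is what makes the wild part vanish and the cohomology-of-quotient argument go through, so I would isolate that as the one place where the hypothesis of (2) is genuinely used beyond the vanishing of the Swan conductor.
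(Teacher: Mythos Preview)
Your approach via the weight--monodromy filtration is more elaborate than the paper's, and the obstacle you yourself flag at the end---untangling the weight-$0$ and weight-$2$ graded pieces under the combined action of $N$ and $\Gamma$---is precisely what the paper's route sidesteps. The paper never invokes the weight filtration or the monodromy operator $N$. Instead it proves directly (Theorem~\ref{ssredthm2}) that $V^{I_K}\cong H^1_\et(\Zb_k,\QQ_\ell)$ as $\Gamma_K$-modules, in two short steps. First, Proposition~\ref{ssredprop}---which you cite but slightly misread: it is not a statement about zeta functions, but rather the cospecialization isomorphism $V^{I_L}\cong H^1_\et(\Yb_k,\QQ_\ell)$ for a semistable model over $L$, proved via N\'eron models and the identification $\Jb^0\cong{\rm Pic}^0(\Yb)$---handles the passage to the special fiber over $L$ in one stroke, with no filtration bookkeeping. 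Second, one takes $I_K$-invariants of both sides and applies the Hochschild--Serre spectral sequence for the finite quotient $\Yb_k\to\Yb_k/I_K=\Zb_k$; since group cohomology of a finite group with $\QQ_\ell$-coefficients vanishes, this yields $H^1_\et(\Yb_k,\QQ_\ell)^{I_K}\cong H^1_\et(\Zb_k,\QQ_\ell)$. Parts (1) and (2) are then immediate corollaries: (1) is the standard Frobenius/zeta identification (Corollary~\ref{ssredcor1}), and (2) is just $f_{Y/K}=\epsilon=\dim V-\dim V^{I_K}$ once tameness forces $\delta=0$ (Corollary~\ref{ssredcor2}).

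One further correction to your outline: you write that the tameness hypothesis in (2) is what makes ``the cohomology-of-quotient argument go through''. It is not. The isomorphism $H^1_\et(\Yb_k,\QQ_\ell)^{I}\cong H^1_\et(\Zb_k,\QQ_\ell)$ holds regardless of whether $L/K$ is tame, since it depends only on $I$ being finite and the coefficients being $\QQ_\ell$. Tameness enters solely to give $\delta=0$. So your worry that $\dim V^{I_K}$ might fail to equal $\dim H^1_\et(\Zb_k^{\rm inert},\QQ_\ell)$ is unfounded: that equality is exactly Theorem~\ref{ssredthm2}, valid with no tameness assumption, and it is already needed (and established) for part~(1).
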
   

  The first statement of Theorem
\ref{introthm}.(1) follows from Corollary \ref{ssredcor1}. That
corollary shows that one may use somewhat more general models of $Y$.  The
computational aspects are discussed in \S~\ref{cohomology}. Theorem
\ref{introthm}.(2) is Corollary \ref{ssredcor2}. An analogous
statement in the wild case can be found in \S~\ref{etale5}.

\subsection{}

Let us compare the two methods discussed in \S~\ref{regsec} and
\S~\ref{sssec}.  If the curve $Y$ already has semistable reduction,
the minimal regular model of $Y$ is also semistable. In this case
there is no essential difference between the two methods. In general,
however, the two methods are quite different in nature.

From the theoretical point of view one may consider the method of stable
reduction as `better' because it gives more information. For instance, unlike
the regular model, the stable model is invariant under base change of the
curve $Y$ to any finite extension $K'/K$. Therefore, once the stable reduction
of $Y$ has been computed, we can directly compute $L(Y'/K',s)$ and
$f_{Y'/K'}$, where $Y':=Y\otimes_KK'$. 

From a computational point of view it may seem to be a lot easier to
find a regular model. After all, to compute a semistable model is
essentially equivalent to computing a regular model over a larger
field $L$ {\em and} to find the correct extension $L/K$ in the first
place. However, one goal of the present paper is to show that, at
least for special classes of
curves, it is actually rather easy to determine the stable reduction,
even though the reduction behavior can be arbitrarily complicated.

\subsection{Superelliptic curves}

 We consider {\em
  superelliptic curves}, i.e.\ curves $Y$
given by an equation of the form
\[
     y^n=f(x),
\]
where $n$ is a positive integer and $f(x)$ is a rational function over a
$p$-adic number field $K$. The additional and crucial condition we impose is
that the exponent $n$ must be prime to the residue characteristic $p$ of $K$.

Let $L_0/K$ be the splitting field of $f(x)$, i.e.\ the smallest
extension of $K$ over which all poles and zeros of $f(x)$ become
rational. Our main result in \S~\ref{kummer} says that $Y$ has
semistable reduction over an explicit and at most tamely ramified
extension $L/L_0$. Moreover, the stable reduction $\Yb^{\rm stab}$,
together with the natural action of $\Gamma=\Gal(L/K)$, can be
described easily and in a purely combinatorial manner. The only part
which may be computationally difficult is the analysis of the
extension $L_0/K$. Indeed, by choosing $f(x)$ appropriately we can
make this extension as large and as complicated as we want. However,
it is possible to construct examples where the computation of the
stable reduction is still rather easy, but the standard algorithms for
computing a regular model fail.

Starting from the description of the stable reduction, we  give an
explicit procedure to determine an equation for the inertial reduction
$\Zb^{\rm inert}=\Yb^{\rm stab}/\Gamma$ in \S~\ref{mono}. This equation
can then be used to compute the local $L$-factor of $Y$ and the exponent of
conductor $f_{Y/K}$, via Theorem \ref{introthm}.

We remark that our description of the stable reduction of
superelliptic curves is based on a very special case of more general
results on {\em admissible reduction} for covers of curves. These
results are well known to experts. One of the goals of the present paper
is to make these results more widely known and to demonstrate their
usefulness for explicit computations.  In a subsequent paper, we will
present a software implementation of our results.

%%%%%%%%%%%%%%%%%%%%%%%%%%%%%%%%%
\section{Stable and inertial reduction}  \label{etale}
%%%%%%%%%%%%%%%%%%%%%%%%%%%%%%%%%

In this section we prove Theorem \ref{introthm}.

\subsection{} \label{etale1}

Let $p$ be a prime number and $K$ a finite extension of $\Q_p$. The
residue field of $K$ is a finite field, which we
denote  by $\FF_K$. The residue field of a finite extension $L/K$ is
denoted by $\FF_L$.

We choose an algebraic closure $\Kb$ of $K$ and write $\Gamma_K=\Gal(\Kb/K)$
for the absolute Galois group of $K$. The residue field of $K^\alg$ is denoted
by $k$; it is the algebraic closure of $\FF_K$. 

Let $K^\nr\subset\Kb$ be the maximal
unramified extension of $K$ and $I_K:=\Gal(\Kb/K^\nr)$ the inertia group of
$K$. We have a short exact sequence
\[
      1 \to I_K \to \Gamma_K \to \Gamma_{\FF_K} \to 1,
\]
where $\Gamma_{\FF_K}=\Gal(k/\FF_K)$ is the absolute Galois group of
$\FF_K$. This is the free profinite group of rank one
generated by the {\em Frobenius element} $\sigma_q$,  defined by
$\sigma_q(\alpha):=\alpha^q$, where $q=|\FF_K|$.

\subsection{} \label{etale2}

Let $Y/K$ be a smooth projective and
absolutely irreducible curve over $K$. We assume that the genus $g$ of $Y$
satisfies $g\geq 2$.  We fix an auxiliary prime $\ell\neq p$.
As explained in the introduction, we are interested in computing
certain invariants of the  natural action
of $\Gamma_K$ on the \'etale cohomology group
\[
   V=H^1_\et(Y_{\Kb},\Q_\ell) := 
    \big(\varprojlim_n H^1_\et(Y_{\Kb},\Z/\ell^n)\big)\otimes \Q_\ell.
\]
The {\em local
$L$-factor} is the function $L(Y/K,s):=P_1(Y/K,q^{-s})^{-1}$, where
\[
        P_1(Y/K,T):=\det(1-\sigma_q^{-1}\cdot T\mid V^{I_K}).
\]
The {\em exponent of conductor} is defined as the integer
\begin{equation} \label{conductoreq}
     f=f_{Y/K}=\epsilon+\delta,
\end{equation}
where 
\begin{equation}\label{epsiloneq}
      \epsilon := \dim V - \dim V^{I_K}
\end{equation}
is the codimension of the $I_K$-invariant subspace and $\delta$ is the
 {\em Swan conductor} of $V$ (see \cite{SerreZeta} \S~2, or
\cite{Wiese08}, \S~3.1).

The invariant $f_{Y/K}$ depends only on the $I_K$-action on $V$, and vanishes
if the $I_K$-action is trivial (i.e.\ if $V$ is {\em unramified}). In general
it gives a measure of `how bad' the ramification of $V$ is.

\subsection{} \label{etale3}

A theorem of Deligne and Mumford (\cite{DeligneMumford69}) states the
existence of a finite extension $L/K$ such that the curve
$Y_{L}=Y\otimes_K L$ has {\em semistable reduction}. This means that there
exists a flat and proper $\OO_{L}$-model $\cY$ of $Y_{L}$ whose special
fiber $\Yb$ is reduced and has at most ordinary double points as
singularities. The model $\cY$ is not unique, but the assumption
$g\geq 2$ implies that there is a minimal semistable model $\cY^{\rm
  stab}$, called the {\em stable model} of $Y_L$. The special fiber
$\Yb^{\rm stab} $ of $\cY^{\rm stab}$ is called the {\em stable
  reduction} of $Y_L$. It is a stable curve over the residue field
$\FF_L$, uniquely determined by the $K$-curve $Y$ and the extension
$L/K$. The dependence on $L$ is very mild: if $L'/L$ is a further
finite extension then the stable reduction of $Y$ corresponding to the
extension $L'/K$ is just the base change of $\Yb^{\rm stab}$ to the
residue field of $L'$.

If $\cY$ is an arbitrary semistable model of $Y_L$,  there exists a unique
$\OO_L$-morphism $c:\cY\to\cY^{\rm stab}$ which is the identity on the generic
fiber. The morphism $c$ contracts the instable components of the special fiber
of $\cY$ and is an isomorphism everywhere else. Here an irreducible component
$C$ of the special fiber of $\cY$ is called {\em instable} if $C$ is smooth of
genus zero and intersects the rest of the special fiber in at most two points.

After replacing $L$ by a suitable finite extension we may and will
henceforth assume that $L/K$ is a Galois extension. We also choose an
embedding $L\subset K^{\rm alg}$. Then the absolute Galois group
$\Gamma_K$ acts naturally on $Y_L$ via its finite quotient
$\Gamma:=\Gal(L/K)$. Let $I\lhd\Gamma$ denote the inertia
subgroup, i.e.\ the image of $I_K$ in $\Gamma$. Note that the action
of $\Gamma$ on $Y_L$ is only $L/K$-semilinear, but its restriction to
$I$ is $L$-linear.

\begin{definition} \label{quasistabledef}
  A semistable $\OO_L$-model $\cY$ of $Y_L$ is called {\em
    quasi-stable} if the tautological action of $\Gamma$ on $Y_L$ extends
  to an action on $\cY$.
\end{definition}  

The uniqueness of the stable model shows that it is quasi-stable. For our
purposes it is more convenient to work with an arbitrary quasi-stable model
$\cY$. Let $\Yb$ denote the special fiber of $\cY$. Restricting the canonical
$\Gamma$-action on $\cY$ to $\Yb$ yields a canonical action of $\Gamma$ on
$\Yb$. This action is again semilinear, meaning that the structure map
$\Yb\to\Spec \FF_L$ is $\Gamma$-equivariant. However, the action of the
inertia group $I$ on $\Yb$ is $\FF_L$-linear.

We let $\Zb:=\Yb/\Gamma$ denote the quotient scheme. It has a natural
structure of an $\FF_K$-scheme, and as such we have
$\Zb_{\FF_L}:=\Zb\otimes_{\FF_K}\FF_L=\Yb/I$. Since the quotient of a
semistable curve by a finite group of geometric automorphisms is
semistable, it follows that $\Zb\otimes_{\FF_K}\FF_L$ is a semistable
curve over $\FF_L$. We conclude that $\Zb$ is a semistable curve over
$\FF_K$. We denote by $\Zb_k:=\Zb\otimes_{\FF_K} k$ the base change of
$\Zb$ to the algebraic closure $k$ of $\FF_K$.

\begin{definition} \label{inertialreddef}
  The $\FF_K$-curve $\Zb=\Yb/\Gamma$ is called the {\em inertial
    reduction} of $Y$, corresponding to the quasi-stable model $\cY$.
  \end{definition}

\begin{remark} 
In \S~\ref{sssec} we considered the inertial reduction $\Zb^{\rm
  inert}$ corresponding to the stable model $\cY^{\rm stab}$.  It is canonically
associated with the $K$-curve $Y$ and does not depend on the choice of
the Galois extension $L/K$. 

An arbitrary quasi-stable model $\cY$ admits an contraction map
$c:\cY\to\cY^{\rm inert}$, which is $\Gamma$-equivariant. The inertial
reduction $\Zb$ corresponding to $\cY$ admits therefore a map
$\Zb\to\Zb^{\rm inert}$ contracting the components of $\Zb$ which are
the image of the instable components of $\Yb$. The image of a
stable component of $\Yb$ may be an instable component of $\Zb$. So in
general, $\Zb^{\rm inert}$ is not a stable curve.
\end{remark}

The following theorem is the main result of this section.

\begin{theorem} \label{ssredthm2} Let $\Zb$ be the inertial reduction of $Y$
  corresponding to some quasi-stable model $\cY$.  We have a natural,
  $\Gamma_K$-equivariant isomorphism
  \[
       H^1_\et(Y_{\Kb},\Q_\ell)^{I_K} \cong H^1_\et(\Zb_k,\Q_\ell).
  \]
\end{theorem}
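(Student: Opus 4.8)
The plan is to prove the isomorphism by combining three standard ingredients: the relation between $I_K$-invariants of étale cohomology and the special fiber of a semistable model (the "local invariant cycle" / nearby cycles story), the behavior of étale cohomology under finite quotients, and a descent step comparing $\FF_L$-cohomology with $\FF_K$-cohomology. First I would reduce to the case where $\cY$ is the stable model $\cY^{\rm stab}$: an arbitrary quasi-stable $\cY$ maps $\Gamma$-equivariantly to $\cY^{\rm stab}$ by a contraction $c$, and $c$ only blows down chains of $\mathbb{P}^1$'s meeting the rest in $\le 2$ points, so on cohomology of the special fibers (and of the quotients $\Zb$) $c$ induces an isomorphism on $H^1_\et$ — trees of rational curves contribute nothing to $H^1$. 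Hence it suffices to treat $\cY=\cY^{\rm stab}$, which moreover we may and will base change to $\OO_{L^{\rm nr}}$ so that the residue field becomes $k$; this replaces $\Gamma$ by $I$ (acting $L$-linearly) and it suffices to produce an $I_K$-equivariant isomorphism $H^1_\et(Y_{\Kb},\Q_\ell)^{I_K}\cong H^1_\et((\Yb^{\rm stab}/I)_k,\Q_\ell)$, since both sides then carry the residual $\Gamma_{\FF_K}$-action compatibly and the statement of the theorem follows by taking Frobenius-equivariance into account.

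Next, for the stable model $\cY^{\rm stab}$ over $\OO_{L^{\rm nr}}$ with special fiber $\Yb:=\Yb^{\rm stab}_k$, I would invoke the standard fact that the specialization map identifies $H^1_\et(Y_{\Kb},\Q_\ell)^{I_L}$ with $H^1_\et(\Yb,\Q_\ell)$, where $I_L\subset I_K$ is the inertia of $L$; this is the tame/semistable case of the local monodromy theorem — concretely, the nearby-cycle spectral sequence for a semistable scheme degenerates on $H^1$ and $H^1$ of nearby cycles has no monodromy-weight-$2$ part in the relevant range, so $H^1$ of the generic fiber has $H^1(\Yb,\Q_\ell)$ as its $I_L$-invariants (one can also cite that the component group / toric part is what is killed, but here $I_L$ acts trivially on $\Yb$ already). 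Then I would take $G:=I/I_L=\mathrm{Gal}(L/L^{\rm nr}\cdot K^{\rm nr})$-invariants: $G$ is a finite group acting $k$-linearly on $\Yb$, acting on $V^{I_L}=H^1_\et(\Yb,\Q_\ell)$ compatibly with the action on cohomology, and $V^{I_K}=(V^{I_L})^{G}=H^1_\et(\Yb,\Q_\ell)^{G}$. Finally, for a finite group $G$ acting on a (semistable, hence in particular over $k$ a reasonable) curve $\Yb$ with quotient $\Yb/G$, the trace/transfer argument gives $H^1_\et(\Yb/G,\Q_\ell)\cong H^1_\et(\Yb,\Q_\ell)^{G}$ — the quotient map $\pi$ is finite, $\pi_*\pi^*=|G|\cdot\mathrm{id}$ on $H^1(\Yb/G)$, and $\pi^*$ identifies $H^1(\Yb/G,\Q_\ell)$ with $(\pi_*\Q_\ell)^G$-cohomology, whose $H^1$ is the $G$-invariants in $H^1(\Yb)$ since we are in characteristic $0$ coefficients. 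Combining, $V^{I_K}\cong H^1_\et(\Yb,\Q_\ell)^G\cong H^1_\et((\Yb/I)_k,\Q_\ell)=H^1_\et(\Zb_k,\Q_\ell)$.

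The one point that needs care, and which I expect to be the main obstacle, is keeping the Galois actions honest through these identifications — in particular checking that the residual $\Gamma_{\FF_K}=\mathrm{Gal}(k/\FF_K)$-action on $V^{I_K}$ (via $\sigma_q$) matches the geometric Frobenius action on $H^1_\et(\Zb_k,\Q_\ell)$ coming from the $\FF_K$-structure of $\Zb=\Yb^{\rm stab}/\Gamma$. This is where the semilinearity of the $\Gamma$-action on $\Yb^{\rm stab}$ is used: the specialization isomorphism $H^1_\et(Y_{\Kb},\Q_\ell)^{I_K}\cong H^1_\et(\Yb^{\rm stab},\Q_\ell)^{I}$ is $\Gamma_{\FF_K}$-equivariant because it is functorial for the $\OO_K$-semilinear automorphisms, and then one must observe that $H^1_\et(\Yb^{\rm stab},\Q_\ell)^{I}$, with its residual $\Gamma_{\FF_K}$-action, is exactly $H^1_\et(\Zb_k,\Q_\ell)$ with its Frobenius — i.e.\ that quotienting by $\Gamma$ over $\FF_K$ and then base changing to $k$ gives the same as base changing $\Yb^{\rm stab}$ to $k$, quotienting by $I$, and remembering the descent datum. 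This is formal once set up correctly (descent along $\FF_L/\FF_K$ commutes with the finite quotient), but it is the step where a careless treatment of "which Frobenius" would introduce an error, so I would write it out carefully rather than leave it to the reader.
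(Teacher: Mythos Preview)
Your proposal is correct and follows the same two-step strategy as the paper: first identify $V^{I_L}$ with $H^1_\et(\Yb_k,\Q_\ell)$ via specialization for a semistable model, then take $I_K/I_L$-invariants and identify the result with $H^1_\et(\Zb_k,\Q_\ell)$. The paper's proof is more streamlined in its implementation: it applies its Proposition~\ref{ssredprop} (proved via Tate modules of the Picard scheme and the N\'eron model, rather than nearby cycles) directly to an arbitrary quasi-stable model, so your preliminary reduction to the stable model is unnecessary; and it handles the quotient step by a one-line appeal to the Hochschild--Serre spectral sequence rather than the trace/transfer argument you sketch. Your extended discussion of Frobenius equivariance is what the paper compresses into the phrase ``canonical, and therefore $\Gamma_K$-invariant''---it is good that you flagged this as the delicate point, though in the end it really is formal once the specialization isomorphism is known to be functorial for the semilinear $\Gamma$-action.
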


\begin{corollary} \label{ssredcor1}
  In the situation of Theorem \ref{ssredthm2}, the local
  $L$-factor $L(Y/K,s)$ is equal to the numerator of the local zeta
  function of $\Zb$, i.e.\
  \[
       L(Y/K,s)=P_1(\Zb,q^{-s})^{-1},
  \]
  where 
  \[
      P_1(\Zb,T):=\det\big(1-{\rm Frob}_q\cdot T|H^1_\et(\Zb_k,\Q_\ell)\big)
  \]
  and ${\rm Frob}_q:\Zb\to \Zb$ is the relative $q$-Frobenius
  endomorphism and $q=|\FF_K|$.
\end{corollary}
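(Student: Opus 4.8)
The plan is to deduce the corollary formally from Theorem \ref{ssredthm2} by unwinding the definition of the local $L$-factor. By definition, $L(Y/K,s) = P_1(Y/K,q^{-s})^{-1}$ with $P_1(Y/K,T) = \det(1 - \sigma_q^{-1}\cdot T \mid V^{I_K})$, where $V = H^1_\et(Y_{\Kb},\Q_\ell)$. Theorem \ref{ssredthm2} provides a $\Gamma_K$-equivariant isomorphism $V^{I_K} \cong H^1_\et(\Zb_k,\Q_\ell)$. The only thing to check is that the arithmetic Frobenius $\sigma_q^{-1}$ acting on the left-hand side corresponds, under this isomorphism, to the action of the relative $q$-Frobenius endomorphism ${\rm Frob}_q$ on the right-hand side, so that the characteristic polynomials match and $P_1(Y/K,T) = P_1(\Zb,T)$.

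First I would recall how $\Gamma_{\FF_K}$ acts on $H^1_\et(\Zb_k,\Q_\ell)$ for a scheme $\Zb$ of finite type over $\FF_K$. Since $\Zb_k = \Zb \otimes_{\FF_K} k$, the group $\Gamma_{\FF_K} = \Gal(k/\FF_K)$ acts on $\Zb_k$ through the second factor, hence on its étale cohomology; the topological generator $\sigma_q \in \Gamma_{\FF_K}$ is the arithmetic Frobenius $\alpha \mapsto \alpha^q$. The standard compatibility (see e.g.\ SGA $4\frac12$, Rapport, or Milne's \emph{\'Etale Cohomology}) is that the action of $\sigma_q^{-1}$ on $H^1_\et(\Zb_k,\Q_\ell)$ coincides with the action induced by the relative $q$-Frobenius morphism ${\rm Frob}_q\colon \Zb \to \Zb$ (the morphism which is the identity on the underlying space and the $q$-th power map on the structure sheaf) after base change to $k$. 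Thus $\det(1 - {\rm Frob}_q \cdot T \mid H^1_\et(\Zb_k,\Q_\ell)) = \det(1 - \sigma_q^{-1}\cdot T \mid H^1_\et(\Zb_k,\Q_\ell))$. The same description applies to $V = H^1_\et(Y_{\Kb},\Q_\ell)$: the subquotient $V^{I_K}$ carries an action of $\Gamma_K/I_K \cong \Gamma_{\FF_K}$, and $\sigma_q$ denotes the arithmetic Frobenius as fixed in \S~\ref{etale1}.

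Now I would combine these: the isomorphism of Theorem \ref{ssredthm2} is $\Gamma_K$-equivariant, and since $I_K$ acts trivially on both sides it is in particular equivariant for $\Gamma_{\FF_K} = \Gamma_K/I_K$, hence intertwines the two actions of $\sigma_q^{-1}$. Therefore
\[
  P_1(Y/K,T) = \det(1 - \sigma_q^{-1} T \mid V^{I_K})
   = \det(1 - \sigma_q^{-1} T \mid H^1_\et(\Zb_k,\Q_\ell))
   = P_1(\Zb,T),
\]
and taking $T = q^{-s}$ and inverting gives $L(Y/K,s) = P_1(\Zb,q^{-s})^{-1}$ as claimed. The main (and really only) obstacle is the bookkeeping of the arithmetic-versus-geometric Frobenius and the identification of the Galois action on $H^1_\et(\Zb_k,\Q_\ell)$ with the relative Frobenius endomorphism; this is a well-documented compatibility in étale cohomology, but one must state the conventions carefully to be sure the inverse lands on the correct side. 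Everything else is a formal consequence of Theorem \ref{ssredthm2}.
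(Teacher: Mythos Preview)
Your proposal is correct and follows essentially the same route as the paper: both reduce the corollary to Theorem \ref{ssredthm2} plus the standard identification of the arithmetic Frobenius $\sigma_q^{-1}$ with the relative Frobenius ${\rm Frob}_q$ on $H^1_\et(\Zb_k,\Q_\ell)$. The only difference is cosmetic: the paper spells out this identification via the fact that the absolute Frobenius acts trivially on \'etale cohomology (so $({\rm Id}_{\Zb}\otimes\sigma_q)\circ({\rm Frob}_q\otimes{\rm Id}_k)$ is the identity on cohomology), whereas you invoke it as a documented compatibility.
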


\begin{proof} 
  The action of $\Gamma_K$ on $H^1_\et(\Zb_k,\Q_\ell)$ factors through the
  quotient $\Gamma_K\to\Gamma_{\FF_K}$. The resulting $\Gamma_{\FF_K}$-action
  is the same as the action induced by the identification $\Zb_k=\Zb\otimes
  k$. It follows that the action of an arithmetic Frobenius element
  $\sigma_q\in\Gamma_K$ on $H^1_\et(\Zb_k,\QQ_\ell)$ is induced by the map
  ${\rm Id}_{\Zb}\otimes\sigma_q$. But the composition $({\rm
    Id}_{\Zb}\otimes\sigma_q)\circ({\rm Frob}_q\otimes{\rm Id}_{k})$ is equal
  to the {\em absolute $q$-Frobenius} of $\Zb_k$. Since the absolute Frobenius
  induces the identity on \'etale cohomology, it follows that ${\rm
    Frob}_q=\sigma_q^{-1}$ on $H^1_\et(\Zb_k,\QQ_\ell)$. (This is a standard
  argument, see e.g.\ \cite{DeligneBourbaki355}, Proposition 4.8 (ii) or
  \cite{Chenevert04}.)  The claim is now a consequence of Theorem
  \ref{ssredthm2} and the definition of $L(Y/K,s)$.
\end{proof}

Corollary \ref{ssredcor1} implies that we can compute the local
$L$-factor $L(Y/K,s)$ from the explicit knowledge of the inertial
reduction $\Zb$.  In a special case, this is also enough to determine
the exponent of conductor $f_{Y/K}$. The computation of $f_{L/K}$
without the tameness assumption is described in \S~\ref{etale5}.

\begin{corollary} \label{ssredcor2} 
  Assume that  the extension $L/K$ in  Theorem \ref{ssredthm2}
  is at most tamely ramified. Then  
  \[
        f_{Y/K}=2g(Y)-\dim H^1_\et(\Zb_k,\QQ_\ell).
  \]
\end{corollary}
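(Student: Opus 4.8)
The plan is to deduce the formula directly from Theorem~\ref{ssredthm2} together with the defining properties of the exponent of conductor. Recall from \eqref{conductoreq} and \eqref{epsiloneq} that $f_{Y/K}=\epsilon+\delta$, where $\epsilon=\dim V-\dim V^{I_K}$ and $\delta$ is the Swan conductor of the $\Gamma_K$-module $V=H^1_\et(Y_{\Kb},\QQ_\ell)$. Since $Y$ is a smooth projective curve of genus $g(Y)$ we have $\dim V=2g(Y)$, while Theorem~\ref{ssredthm2} furnishes an isomorphism $V^{I_K}\cong H^1_\et(\Zb_k,\QQ_\ell)$ of $\QQ_\ell$-vector spaces, so $\epsilon=2g(Y)-\dim H^1_\et(\Zb_k,\QQ_\ell)$. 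The corollary therefore follows once we show that the tameness hypothesis forces $\delta=0$.

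To prove $\delta=0$ I would argue as follows. The Swan conductor depends only on the action of the wild inertia subgroup $P_K\subseteq I_K$ (the unique pro-$p$ Sylow of $I_K$), and it vanishes as soon as $P_K$ acts trivially on $V$, since all higher ramification groups $G_K^u$ with $u>0$ lie in $P_K$. So it suffices to show $P_K$ acts trivially on $V$. First, because the extension $L/K$ is at most tamely ramified, passing from $K$ to $L$ does not change the wild inertia: $P_K=P_L\subseteq I_L$. Second, because $Y_L=Y\otimes_K L$ has semistable reduction, Grothendieck's monodromy theorem in the semistable case shows that $I_L$ acts \emph{unipotently} on $V$. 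Consequently $P_K=P_L$ is a pro-$p$ group acting on the $\QQ_\ell$-vector space $V$ through unipotent automorphisms; since $\ell\neq p$, its image — a compact subgroup of a unipotent group over $\QQ_\ell$, hence pro-$\ell$ and torsion-free — must be trivial. Thus $P_K$ acts trivially on $V$, so $\delta=0$ and $f_{Y/K}=\epsilon=2g(Y)-\dim H^1_\et(\Zb_k,\QQ_\ell)$, as claimed.

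I do not anticipate a genuine obstacle here: all the substance of the statement is contained in Theorem~\ref{ssredthm2}, and the vanishing of the Swan conductor over a tamely ramified extension is a standard consequence of the semistable reduction theorem. The only step needing a little care is the final one, namely spelling out why a pro-$p$ group acting unipotently on an $\ell$-adic representation with $\ell\neq p$ must act trivially; this can be carried out either via the exponential/logarithm description of a unipotent $\QQ_\ell$-group, or by invoking the fact that a continuous homomorphism from a pro-$p$ group to a pro-$\ell$ group is trivial.
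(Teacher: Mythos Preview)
Your argument is correct and follows the same outline as the paper's proof: compute $\epsilon$ via Theorem~\ref{ssredthm2} and show $\delta=0$ because the wild inertia acts trivially. The paper merely asserts that tameness of $L/K$ makes the $\Gamma_K$-action on $V$ tame and then invokes the definition of the Swan conductor; you supply a full justification of that assertion via Grothendieck's unipotent monodromy theorem and the pro-$p$ versus pro-$\ell$ incompatibility, which is a perfectly good way to fill in the step the paper leaves implicit.
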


\begin{proof}
If the extension $L/K$ is at most tamely ramified, the action of
$\Gamma_K$ on $H^1_\et(Y_{K^\alg},\QQ_\ell)$ is tame. The definition
of the Swan conductor implies that $\delta=0$ in
\eqref{conductoreq}. The claim is now a direct consequence of Theorem
\ref{ssredthm2} and the definition of the conductor $f_{Y/K}$ in
(\ref{conductoreq}).
\end{proof}

\subsection{}\label{cohomology} 

Corollary \ref{ssredcor1} reduces the calculation of the local
$L$-factor to the calculation of the relative Frobenius endomorphism
on the \'etale cohomology of the semistable curve $\Zb$.  The
following well-known lemma describes this action.

In this subsection we let $\Zb/\FF_K$ be an arbitrary semistable curve
defined over the finite field $\FF_K$. Let $k$ be the algebraic
closure of $\FF_K$ and $\Zb_k$ the base change to $k$. Denote by
$\pi:\Zb_k^{(0)}\to \Zb_k$ the normalization. Then $\Zb_k^{(0)}$ is
the disjoint union of its irreducible components, which we denote by
$(\Zb_j)_{j\in J}$.  These correspond to the irreducible components
of $\Zb_k$. The components $\Zb_j$ are smooth projective curves.  The
absolute Galois group $\Gamma_{\FF_K}$ of $\FF_K$ naturally acts on
the set of irreducible components. We denote the permutation character
of this action by $\chi_{\rm comp}.$

Let $\xi\in \Zb_k$ be a singular point. Then $\pi^{-1}(\xi)\subset
\Zb_k^{(0)}$ consists of two points. We define a $1$-dimensional character
$\varepsilon_\xi$ on the stabilizer $\Gamma_{\FF_K}(\xi)\subset\Gamma_{\FF_K}$
of $\xi$ as follows.  If the two points in $\pi^{-1}(\xi)$ are permuted by
$\Gamma_{\FF_K}(\xi)$, then $\epsilon_\xi$ is the unique character of order
two. Otherwise, $\varepsilon_\xi={\boldsymbol 1}$ is the trivial character.
Denote by $\chi_\xi$ the character of the induced representation
\[
   \Ind_{\Gamma_{\FF_K}(\xi)}^{\Gamma_{\FF_K}}\varepsilon_\xi.
\]
In the case that $\varepsilon_\xi=1$ this is just the character of
the permutation representation of the orbit of $\xi$. Define
\[
   \chi_{\rm sing}=\sum_{\xi\in \Zb_k^{\rm sing}}\chi_\xi.
\]

We denote by $\Delta_{\Zb_k}$ the graph of components of $\Zb_k$.

\begin{lemma}\label{cohomologylem}
Let $\Zb/\FF_K$ be a semistable curve and $\ell$ a prime with $\ell\nmid q$. 
\begin{enumerate}
\item We have a decomposition
\[
   H^1_\et(\Zb_k,\Q_\ell)= \oplus_{j\in
     J}H^1_\et(\Zb_j, \QQ_\ell)\oplus H^1(\Delta_{\Zb_k})
\]
as $\Gamma_{\FF_K}$-representation. 
\item The character of $H^1(\Delta_{\Zb_k})$ as
  $\Gamma_{\FF_K}$-representation is $1+\chi_{\rm sing}-\chi_{\rm
  comp}$.
\end{enumerate}
\end{lemma}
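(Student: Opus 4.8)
The plan is to analyze the cohomology of $\Zb_k$ via the normalization sequence, which is the standard tool for computing cohomology of a nodal curve. First I would recall the short exact sequence of sheaves on $\Zb_k$ (in the \'etale topology, with $\QQ_\ell$-coefficients)
\[
   0 \to \QQ_\ell \to \pi_*\QQ_\ell \to \cF \to 0,
\]
where $\pi:\Zb_k^{(0)}\to\Zb_k$ is the normalization and $\cF$ is the skyscraper sheaf supported on the singular locus $\Zb_k^{\rm sing}$ whose stalk at a node $\xi$ is the cokernel of $\QQ_\ell\to\QQ_\ell^{\oplus\pi^{-1}(\xi)}$, i.e.\ a one-dimensional space. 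Taking the long exact cohomology sequence, and using that $\Zb_k^{(0)}=\coprod_{j\in J}\Zb_j$ is a disjoint union of smooth projective curves (so $H^0_\et(\Zb_k^{(0)},\QQ_\ell)=\QQ_\ell^{J}$ and $H^1_\et(\Zb_k^{(0)},\QQ_\ell)=\oplus_j H^1_\et(\Zb_j,\QQ_\ell)$), one obtains
\[
   0 \to \QQ_\ell \to \QQ_\ell^{J} \to \bigoplus_{\xi}(\cF)_\xi \to H^1_\et(\Zb_k,\QQ_\ell) \to \bigoplus_j H^1_\et(\Zb_j,\QQ_\ell) \to 0,
\]
the last map being surjective because $H^2_\et(\Zb_k,\QQ_\ell)\to H^2_\et(\Zb_k^{(0)},\QQ_\ell)$ is injective (both have dimension equal to the number of connected components, and the map on $H^0$-cokernels feeding into it vanishes). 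All maps here are $\Gamma_{\FF_K}$-equivariant since $\pi$ and the whole construction are defined over $\FF_K$.

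For part (1): the sequence above shows $H^1_\et(\Zb_k,\QQ_\ell)$ is an extension of $\oplus_j H^1_\et(\Zb_j,\QQ_\ell)$ by the cokernel $W$ of $\QQ_\ell^{J}\to\oplus_\xi(\cF)_\xi$. I would identify $W$ with $H^1(\Delta_{\Zb_k})$: the complex $\QQ_\ell^{J}\to\oplus_\xi\QQ_\ell$ (after choosing an orientation of each edge) is exactly the simplicial cochain complex computing $H^*(\Delta_{\Zb_k},\QQ_\ell)$ of the dual graph, with vertices indexed by $J$ and edges by the nodes; its $H^0$ is $\QQ_\ell$ (the constants, matching the injection of $\QQ_\ell$ above, since $\Zb_k$ and hence $\Delta_{\Zb_k}$ is connected — or more precisely the component count matches) and its $H^1$ is $W$. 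To split the extension one invokes weights: $H^1$ of a smooth projective curve is pure of weight $1$, while $H^1(\Delta_{\Zb_k})$ is of weight $0$, so the extension splits canonically as $\Gamma_{\FF_K}$-representations. (Alternatively, one cites the weight spectral sequence / the structure of the monodromy filtration, which degenerates here for dimension reasons.)

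For part (2): it remains to compute the character of $W=H^1(\Delta_{\Zb_k})$. From the four-term exactness $0\to\QQ_\ell\to\QQ_\ell^{J}\to\oplus_\xi(\cF)_\xi\to W\to 0$ of $\Gamma_{\FF_K}$-representations, the character is additive: $\chi_W = \chi_{\oplus_\xi(\cF)_\xi} - \chi_{\QQ_\ell^J} + \chi_{\QQ_\ell} = \chi_{\oplus_\xi(\cF)_\xi} - \chi_{\rm comp} + 1$, since $\QQ_\ell^{J}$ carries the permutation character $\chi_{\rm comp}$. Finally I would identify $\chi_{\oplus_\xi(\cF)_\xi}$ with $\chi_{\rm sing}$: the group $\Gamma_{\FF_K}$ permutes the nodes, with stabilizer $\Gamma_{\FF_K}(\xi)$ at $\xi$ acting on the one-dimensional stalk $(\cF)_\xi$ through the sign of its permutation of the two branches $\pi^{-1}(\xi)$ — this sign is the trivial character if the branches are fixed and the order-two character if they are swapped, which is precisely $\varepsilon_\xi$. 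Hence $\oplus_\xi(\cF)_\xi \cong \oplus_{[\xi]}\Ind_{\Gamma_{\FF_K}(\xi)}^{\Gamma_{\FF_K}}\varepsilon_\xi$ summed over orbit representatives, with character $\sum_{\xi\in\Zb_k^{\rm sing}}\chi_\xi = \chi_{\rm sing}$. Combining gives $\chi_W = 1 + \chi_{\rm sing} - \chi_{\rm comp}$, as claimed.

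The main obstacle is the splitting in part (1): one must either set up enough of the theory of weights on the \'etale cohomology of the (proper but singular) curve $\Zb_k$ to conclude that a weight-$1$ piece and a weight-$0$ piece cannot interact, or invoke the known structure of the cohomology of semistable/nodal curves. Everything else is the routine normalization sequence plus bookkeeping with induced representations; the weight argument is where the genuine input lies, though since the statement is labelled "well-known" a short citation in place of a full argument is acceptable.
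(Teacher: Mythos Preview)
Your approach is essentially the same as the paper's: both use the normalization sequence $0\to\QQ_\ell\to\pi_*\QQ_\ell\to Q\to 0$ and read off the result from the associated long exact sequence. Two remarks. First, your justification for the surjectivity of $H^1_\et(\Zb_k,\QQ_\ell)\to\oplus_j H^1_\et(\Zb_j,\QQ_\ell)$ via an $H^2$ argument is more work than necessary: the next term in the long exact sequence is $H^1_\et(\Zb_k,Q)$, which vanishes because $Q$ is a skyscraper sheaf. Second, you are actually more careful than the paper on one point: the paper's proof asserts that ``(1) follows as well'' without saying why the short exact sequence of $\Gamma_{\FF_K}$-modules splits, whereas you correctly observe that the two pieces have distinct Frobenius weights ($0$ versus $1$) and hence cannot mix. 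That weight argument is the honest way to obtain a direct-sum decomposition rather than just an equality of characters.
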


\begin{proof}
As before, we let $\pi:\Zb_k^{(0)}\to \Zb_k$ be the normalization.
We have a short exact sequence
\[
     0\to  \Q_\ell \to \pi_\ast(\Q_\ell)\to Q\to 0
\]
of sheaves on $\Zb_k$, where $Q:=\pi_\ast(\Q_\ell)/\Q_\ell$ is a
skyscraper sheaf with support in the singular points. This induces
\begin{gather*}
0\to H^0_\et(\Zb_k, \Q_\ell)\to H^0_\et(\Zb_k,\pi_\ast(\Q_\ell))\to
H^0_\et(\Zb_k,Q)\to\\ H^1_\et(\Zb_k,\Q_\ell)\to
H^1_\et(\Zb_k,\pi_\ast(\QQ_\ell)\to 0.
\end{gather*}

Identifying $H^0_\et(\Zb_k,\pi_\ast(\Q_\ell))$ with $\QQ_\ell^J$, we find that
the kernel of the map $H^1_\et(\Zb_k,\Q_\ell)\to
H^1_\et(\Zb_k,\pi_\ast(\QQ_\ell))$ equals $H^0_\et(\Zb_k, \Q_\ell)\oplus
H^0_\et(\Zb_k,Q)/\QQ_\ell^J$ as $\Gamma_{\FF_K}$-representation. It is easy to
see that the character of $H_\et^0(\Zb_k,Q)$ is equal to $\chi_{\rm
  sing}$. This proves (2). Since
$H^1_\et(\Zb_k,\pi_\ast(\QQ_\ell)=\oplus_{j\in J}H^1_\et(\Zb_j,\Q_\ell)$, (1)
follows as well.
\end{proof}

The irreducible components of $\Zb$ are in general not absolutely
irreducible. An irreducible component $\Zb_{[j]}$ of $\Zb$ decomposes
in $\Zb_k$ as a finite union of absolutely irreducible curves, which
form an orbit under $\Gamma_{\FF_K}$. Let $\Zb_j$ be a representative
of the orbit.  Let $\Gamma_j\subset \Gamma_{\FF_K}$ be the stabilizer
of $\Zb_j$ and $\FF_{q_j}=k^{\Gamma_j}.$ We may identify $\Zb_{[j]}$
and $\Zb_j/\Gamma_j$ as absolute schemes. The natural
$\FF_K$-structure of $\Zb_{[j]}$ (which is missing from
$\Zb_j/\Gamma_j$) is given by
\[
\Zb_j/\Gamma_j\to \Spec(\FF_{q_j})\to \Spec(\FF_K).
\]
With this interpretation, the contribution of $\Zb_{[j]}$ to the local
zeta function in Corollary \ref{ssredcor1} can be computed explicitly
using point counting. We refer to \S~\ref{exa23} for an example where
$\FF_{q_j}\neq \FF_K$.

Summarizing, we see that to compute the local L-factor it suffices to
describe the irreducible components of the normalization $\Zb^{(0)}$
of $\Zb$ using equations over $\FF_K$, together with the inverse image
$\Zb^{(1)}\subset\Zb^{(0)}$ of the singular locus of $\Zb$. In the
situation of Corollary \ref{ssredcor2} the same information also
yields the exponents of conductor. In the general case we need
somewhat more information (Theorem \ref{ssredthm3} below), which may
be calculated in an equally explicit way.
 For superelliptic curves this
will be done in \S~\ref{mono}.

\subsection{} \label{etale4}

The proof of Theorem \ref{ssredthm2} relies on the following
(well-known) proposition.

\begin{proposition}\label{ssredprop}
  Let $K$ be a henselian local field. Let $k$ denote the algebraic
  closure of the residue field of $K$. Let $Y$ be a smooth projective
  curve over $K$ and $\cY$ be an $\OO_K$-model of $Y$ which is
  semistable or regular.  If $\cY$ is regular we assume moreover that
  the gcd of the multiplicities of the components of the special fiber
  $\Yb$ of $\cY$ is one.  Then the cospecialization map induces an
  isomorphism
  \[
       H^1_\et(Y_{\Kb},\Q_\ell)^{I_K} \cong H^1_\et(\Yb_k,\Q_\ell).
  \]
\end{proposition}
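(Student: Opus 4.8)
The plan is to prove Proposition~\ref{ssredprop} by reducing to the case of a semistable model over a large enough base, where one can invoke the theory of nearby cycles and the weight-monodromy picture for curves. First I would pass to the maximal unramified extension: since $K$ is henselian, $\OO_{K^\nr}$ is again henselian and the special fiber does not change upon base change to the residue-field closure, so I may assume $K$ has algebraically closed residue field $k$ and $I_K=\Gamma_K$. Next I would treat the two cases separately. In the semistable case, the model $\cY$ is a proper flat $\OO_K$-scheme with semistable special fiber $\Yb$; here I would invoke the standard description of the nearby-cycle complex $R\psi\Q_\ell$ for a semistable curve. Concretely, $R^0\psi\Q_\ell=\Q_\ell$ and $R^1\psi\Q_\ell$ is a skyscraper sheaf supported at the double points, on which inertia acts through the tame quotient, and the cospecialization map $H^1_\et(\Yb_k,\Q_\ell)\to H^1_\et(Y_{\Kb},\Q_\ell)$ identifies the source with the $I_K$-invariants of the target. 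The cleanest way to package this is via the exact sequence
\[
  0 \to H^1_\et(\Yb_k,\Q_\ell) \to H^1_\et(Y_{\Kb},\Q_\ell) \to H^0(\Yb_k,R^1\psi\Q_\ell) \to H^2_\et(\Yb_k,\Q_\ell) \to \cdots
\]
coming from the Leray (or Rapoport--Zink) spectral sequence, together with the fact that $I_K$ acts trivially on $H^1_\et(\Yb_k,\Q_\ell)$ and on $H^2$, while on $H^0(\Yb_k,R^1\psi)$ the coinvariants vanish (the monodromy operator $N$ is injective there after tensoring suitably). This gives the isomorphism on $I_K$-invariants.

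For the regular case I would reduce to the semistable one by base change. Choose a finite extension $L/K$ over which $Y$ acquires semistable reduction, and let $\cY^{\rm stab}_L$ be the stable model; there is a canonical morphism relating the base change $\cY_{\OO_L}$ (after normalization and resolution) to $\cY^{\rm stab}_L$. The hypothesis that the gcd of the multiplicities of the components of $\Yb$ is one is exactly what is needed to ensure that the cohomology of $\Yb_k$ is computed correctly — without it, the component of multiplicity $>1$ can absorb a copy of $\Q_\ell$ and the statement fails (this is the classical obstruction, already visible for a curve with a single non-reduced fiber). With that hypothesis, I would use the comparison between the $\ell$-adic cohomology of the special fiber of the minimal regular model and the inertia-invariants of $H^1_\et(Y_{\Kb},\Q_\ell)$, which for curves goes back to the Picard--Lefschetz / SGA~7 analysis and is spelled out, e.g., in the work on the monodromy pairing; concretely one can compute both sides from the semistable model over $L$ and the action of $\Gal(L/K)$, checking that taking $\Gal(L/K)$-invariants of the semistable picture and then $I_K$-invariants recovers $H^1_\et(\Yb_k,\Q_\ell)$.

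The main obstacle I expect is the bookkeeping in the regular case: relating $H^1_\et(\Yb_k,\Q_\ell)$ for the (possibly non-semistable but regular) model $\cY$ to the semistable data over $L$, while keeping careful track of the multiplicities and of how the Galois action of $\Gal(L/K)$ interacts with the monodromy filtration. The semistable case itself is essentially a citation (Rapoport--Zink, or SGA~7~I, or Illusie's survey on nearby cycles), and the unramified reduction step is routine. What requires genuine care is checking that the gcd-one hypothesis is both necessary and sufficient, and that the cospecialization map — rather than some other comparison map — is the one realizing the isomorphism; here I would lean on the fact that for a regular model the special fiber is a divisor with the stated multiplicities, so the local structure of $R\psi$ is controlled by toric/combinatorial data, and the $I_K$-coinvariants on the "vanishing cycle" part are killed precisely when no component has multiplicity divisible by all the others. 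I would then conclude in both cases that the cospecialization map induces the asserted isomorphism $H^1_\et(Y_{\Kb},\Q_\ell)^{I_K}\cong H^1_\et(\Yb_k,\Q_\ell)$.
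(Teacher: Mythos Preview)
Your approach differs substantially from the paper's. The paper gives a short, uniform argument that does not separate the semistable and regular cases at all: it identifies $H^1_\et(Y_{\Kb},\Q_\ell(1))$ and $H^1_\et(\Yb_k,\Q_\ell(1))$ with the rational Tate modules $V_\ell({\rm Pic}^0(Y))$ and $V_\ell({\rm Pic}^0(\Yb))$, invokes SGA~7~I (and Serre--Tate) for $V_\ell({\rm Pic}^0(Y))^{I_K}\cong V_\ell(\Jb^0)$ with $\J$ the N\'eron model of the Jacobian, and then cites Bosch--L\"utkebohmert--Raynaud (Theorem~9.5.4 and Corollary~9.7.2) for the isomorphism $\Jb^0\cong{\rm Pic}^0(\Yb)$. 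The hypotheses on $\cY$ (semistable, or regular with gcd of multiplicities one) are precisely the hypotheses under which these BLR results apply; no nearby-cycle computation, no weight filtration, and no passage to a larger field enters.

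Your semistable argument via $R\psi$ and the monodromy operator is a valid alternative, but your handling of the regular case has a genuine gap. You propose to pass to a semistable model over an extension $L/K$ and then recover $H^1_\et(\Yb_k,\Q_\ell)$ as $\Gal(L/K)$-invariants of $H^1_\et(\Yb^{\rm stab}_k,\Q_\ell)$. But the special fiber $\Yb$ of the given regular model over $K$ and the stable reduction $\Yb^{\rm stab}$ over $L$ are not related by any evident map, and the identification you need is essentially equivalent to the proposition itself (applied once over $K$ and once over $L$). Your closing suggestion --- analyzing $R\psi$ directly on the regular model via the multiplicities --- is in principle workable, but for a non-reduced special fiber $R^0\psi\Q_\ell$ is no longer constant (on a component of multiplicity $m$ it is locally constant of rank $m$ with nontrivial tame inertia action), so the spectral-sequence bookkeeping is considerably heavier than in the semistable case and you have not carried it out. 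The paper's route through the N\'eron model avoids all of this: the gcd-one hypothesis does its work entirely inside the cited BLR comparison $\Jb^0\cong{\rm Pic}^0(\Yb)$.
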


\begin{proof} By \cite{MilneEC}, Corollary 4.18, we have
isomorphisms
\begin{equation} \label{ssredpropeq1}
     H^1_\et(Y_{\Kb},\Q_\ell(1)) \cong V_\ell({\rm Pic}^0(Y)), \quad
     H^1_\et(\Yb_k,\Q_\ell(1)) \cong V_\ell({\rm Pic}^0(\Yb)),
\end{equation}
where $V_\ell(\,\cdot\,)$ denotes the rational $\ell$-adic Tate module.  

Let $\J$ denote the N\'eron model of the Jacobian of $Y$ and $\Jb^0$ the
connected component of its special fiber. Then by \cite{SGA7I}, 6.4 (see also
\cite{SerreTate68}, Lemma 2) we have
\begin{equation} \label{ssredpropeq2}
    V_\ell({\rm Pic}^0(Y))^{I_K} \cong V_\ell(\Jb^0).
\end{equation}
Under the conditions imposed on $\cY$ we have an
isomorphism
\begin{equation} \label{ssredpropeq3}
    \Jb^0 \cong {\rm Pic}^0(\Yb)
\end{equation}
by \cite{BLR}, Theorem 9.5.4 and Corollary 9.7.2. The proposition follows by
combining \eqref{ssredpropeq1}, \eqref{ssredpropeq2} and \eqref{ssredpropeq3}.
\end{proof}

\begin{proof} We prove Theorem \ref{ssredthm2}.
  Let $L/K$ be a finite Galois extension over which $Y$ has
semistable reduction. Let $\cY$ be a quasi-stable model of $Y_L$ and
$\Yb$ its special fiber.  Proposition \ref{ssredprop} yields an isomorphism
\[
     H^1_\et(Y_{\Kb},\Q_\ell)^{I_L} \cong H^1_\et(\Yb_k,\Q_\ell)
\]
which is canonical, and therefore $\Gamma_K$-invariant.
Taking $I_K$-invariants and using the Hochschild--Serre spectral sequence
(\cite{MilneEC}, III.2.20), we conclude that 
\[
    H^1_\et(Y_{\Kb},\Q_\ell)^{I_K} \cong H^1_\et(\Yb_k,\Q_\ell)^{I_K}
        \cong H^1_\et(\Yb_k/I_K,\Q_\ell).
\]
Since $\Yb_k/I_K=\Zb_k$, Theorem \ref{ssredthm2} follows.
\end{proof}

\subsection{} \label{etale5}

We give a formula for the exponent of conductor
$f_{Y/K}$ in terms of the stable reduction $\Yb$ that works in
general, i.e.\ without the tameness assumption of Corollary
\ref{ssredcor2}. 

The exponent of conductor is defined in (\ref{conductoreq}) as
$f_{Y/K} = \epsilon +\delta$. Theorem \ref{ssredthm2} and
(\ref{epsiloneq}) imply that
\begin{equation}\label{epsiloneq2}
  \epsilon = 2g_Y-\dim H^1_\et(\Zb_k,\QQ_\ell).
\end{equation}
Therefore $\epsilon$ may be computed from the
inertial reduction $\Zb$.

The following result expresses the Swan conductor $\delta$ in terms of
the special fiber $\Yb$ of a quasi-stable model $\cY$.  Let
$(\Gamma_i)_{i\geq 0}$ be the filtration of $\Gamma=\Gal(L/K)$ by
higher ramification groups. Then $\Gamma_0=I$ is the inertia group and
$\Gamma_1=P$ its Sylow $p$-subgroup (\cite{SerreCL}, Chapter
4). Moreover, $\Gamma_i=1$ for $i\gg 0$. Let $\Yb_i:=\Yb/\Gamma_i$ be
the quotient curve. Then $\Yb_0=\Yb/I=\Zb_{\FF_L}$ and $\Yb_i=\Yb$ for
$i\gg 0$.

\begin{theorem} \label{ssredthm3}
  The Swan conductor is
  \[
      \delta=\sum_{i=1}^\infty\, \frac{|\Gamma_i|}{|\Gamma_0|}
              \cdot(2g_Y-2g_{\Yb_i}).
  \]
  Here $g_{\Yb_i}$ denotes the arithmetic genus of $\Yb_i$.
\end{theorem}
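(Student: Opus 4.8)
The plan is to compute the Swan conductor $\delta$ directly from its definition in terms of higher ramification groups, using the isomorphism of Theorem \ref{ssredthm2} applied not just to $\Gamma$ but to each of its subgroups $\Gamma_i$. Recall that for the $I_K$-representation $V = H^1_\et(Y_{\Kb},\Q_\ell)$, the Swan conductor is
\[
   \delta = \sum_{i\geq 1} \frac{1}{[\Gamma_0:\Gamma_i]}\,\dim\big(V/V^{\Gamma_i}\big),
\]
where I am using that the $I_K$-action on $V$ factors through the finite quotient $I = \Gamma_0$, so that $V^{\Gamma_i}$ makes sense and the higher ramification filtration of $I_K$ pushes forward to the filtration $(\Gamma_i)$ of $I$. (This is the standard formula; see \cite{SerreCL}, Ch.~VI, or \cite{SerreZeta}, \S~2.) So the first step is simply to rewrite $\dim(V/V^{\Gamma_i}) = \dim V - \dim V^{\Gamma_i} = 2g_Y - \dim V^{\Gamma_i}$, since $\dim V = 2g_Y$.

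The key step is to identify $\dim V^{\Gamma_i}$. Here I would apply Theorem \ref{ssredthm2}, or rather its proof, with $K$ replaced by the fixed field $K_i := L^{\Gamma_i}$. Since $\Gamma_i \lhd \Gamma$ is normal, $K_i/K$ is Galois and $L/K_i$ has Galois group $\Gamma_i$; moreover $L/K_i$ is totally ramified (as $\Gamma_i \subseteq \Gamma_0 = I$), so the inertia group of $K_i$ acting on $V$ has image exactly $\Gamma_i$ in $\Gal(L/K_i) = \Gamma_i$. The quasi-stable model $\cY$ of $Y_L$ is still quasi-stable when we view $Y_L = (Y_{K_i})_L$, and the argument in the proof of Theorem \ref{ssredthm2} — Proposition \ref{ssredprop} over $L$ followed by taking $I_{K_i}$-invariants via Hochschild--Serre — gives a canonical isomorphism
\[
   V^{I_{K_i}} = H^1_\et(Y_{\Kb},\Q_\ell)^{\Gamma_i} \;\cong\; H^1_\et(\Yb_k/\Gamma_i,\Q_\ell) = H^1_\et((\Yb_i)_k,\Q_\ell).
\]
Now $(\Yb_i)_k = \Yb_k/\Gamma_i$ is a semistable curve over $k$ (quotient of a semistable curve by a finite group of geometric automorphisms), so $\dim H^1_\et((\Yb_i)_k,\Q_\ell) = 2g_{\Yb_i}$ by the standard fact that the first $\ell$-adic cohomology of a (connected) semistable curve of arithmetic genus $g$ has dimension $2g$. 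Hence $\dim V^{\Gamma_i} = 2g_{\Yb_i}$, and therefore $\dim(V/V^{\Gamma_i}) = 2g_Y - 2g_{\Yb_i}$.

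Substituting into the ramification-theoretic formula and using $[\Gamma_0:\Gamma_i]^{-1} = |\Gamma_i|/|\Gamma_0|$ yields exactly
\[
   \delta = \sum_{i=1}^\infty \frac{|\Gamma_i|}{|\Gamma_0|}\,\big(2g_Y - 2g_{\Yb_i}\big),
\]
which is the assertion. (The sum is finite because $\Gamma_i = 1$, hence $\Yb_i = \Yb$ and $g_{\Yb_i} = g_Y$, for $i \gg 0$.)

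The main obstacle is the bookkeeping with the ramification filtration: one must be careful that the higher ramification groups of $I_K$ acting on $V$ are correctly captured by the filtration $(\Gamma_i)$ of the finite quotient $I$, i.e.\ that $L/K$ is "large enough" for the filtration on $\Gamma = \Gal(L/K)$ to compute the Swan conductor of $V$. Since $V$ is a representation of $I_K$ with finite image $I$, this amounts to the compatibility of the upper/lower numbering filtrations under the quotient $I_K \twoheadrightarrow I$, which is exactly the functoriality of ramification groups (\cite{SerreCL}, Ch.~IV); one should also note that the Swan conductor depends only on the $I_K$-action and is computed by any such $L$. A secondary point to verify is that $g_{\Yb_i}$ is genuinely the arithmetic genus and that $\dim H^1_\et = 2 g_{\text{arith}}$ even when $\Yb_i$ is reducible or non-reduced-looking after quotient — but the quotient of a semistable curve by geometric automorphisms is again semistable (as already used in \S~\ref{etale3}), so this is fine.
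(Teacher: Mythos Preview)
Your argument contains two genuine errors that happen to cancel. First, the assertion that the $I_K$-action on $V$ factors through the finite quotient $I=\Gamma_0$ is false whenever the component graph of $\Yb$ has a loop: the inertia group $I_L$ then acts unipotently but nontrivially on $V$, so the image of $I_K$ in $\Aut(V)$ is infinite. What is true, and what the paper invokes from \cite{AbbesLuminy}, is that the \emph{wild} inertia $I_K^w$ acts through $\Gamma_1$; this is a nontrivial input that makes the Hilbert formula with the groups $\Gamma_i$ (for $i\geq 1$) meaningful, and it does not follow from general functoriality of ramification filtrations. Second, your claim that $\dim H^1_\et$ of a connected semistable curve equals twice its arithmetic genus is false: Lemma \ref{cohomologylem} gives $\dim H^1_\et((\Yb_i)_k,\Q_\ell) = 2g_{\Yb_i} - \dim H^1(\Delta_{\Yb_i})$. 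Thus your (correct) identification $V^{I_{K_i}}\cong H^1_\et((\Yb_i)_k,\Q_\ell)$ via Theorem \ref{ssredthm2} yields $\dim V^{I_{K_i}} = 2g_{\Yb_i} - \dim H^1(\Delta_{\Yb_i})$, not $2g_{\Yb_i}$.

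The reason you still land on the right formula is that $V^{\Gamma_i}$ (invariants for the genuine $\Gamma_i$-action, which exists for $i\geq 1$ by Abbes) is strictly larger than $V^{I_{K_i}} = (V^{I_L})^{\Gamma_i}$, and the discrepancy is exactly $\dim H^1(\Delta_{\Yb_i})$. The paper obtains $\dim V^{\Gamma_i}=2g_{\Yb_i}$ directly from the $\Gamma_1$-equivariant decomposition $V\cong H^1_\et(\Yb^{(0)})\oplus H_1(\Delta_{\Yb})\oplus H^1(\Delta_{\Yb})$ (again from \cite{AbbesLuminy}) by taking $\Gamma_i$-invariants term by term; the simultaneous presence of $H_1$ and $H^1$ of the graph is what produces $2g_{\Yb_i}$. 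Your route through Theorem \ref{ssredthm2} only ever sees $V^{I_L}$ and its subspaces, so it misses the $H^1(\Delta_{\Yb})$ summand. The paper in fact flags both pitfalls explicitly, noting that the factorization through $\Gamma$ fails for the full inertia $I_K$ and that the identity $\dim V^{\Gamma_i}=2g_{\Yb_i}$ fails for $i=0$.
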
 

\begin{proof} Let $I_K^w\subset\Gamma_K$ denote the wild inertia
subgroup. The image of $I_K^w$ in the finite quotient
$\Gamma=\Gal(L/K)$ is equal to $\Gamma_1$. It follows from
\cite{AbbesLuminy}, Theorem 1.5, that the action of $I_K^w$ on
$V=H^1_\et(Y_{K^\alg},\QQ_\ell)$ factors over the
$\Gamma_1$-action. (Note that this is not true for the action of the
full inertia group $I_K$.) To compute $\delta$ we may therefore use
the {\em Hilbert formula} of \cite{Ogg67}, page $3$, which says that
\begin{equation} \label{deltaeq1}
  \delta=\sum_{i=1}^\infty\; \frac{|\Gamma_i|}{|\Gamma_0|}\cdot
                  \dim_{\QQ_\ell} V/V^{\Gamma_i}.
\end{equation}
Although {\em loc.cit} is an expression for
the Swan conductor of the mod-$\ell$-representation
$\bar{V}=H^1_\et(Y_{K^\alg},\FF_\ell)$, we can use the same formula for $V$ as
well. This follows from \cite{Wiese08}, Proposition 3.1.42. To finish the
proof it remains to show that
\begin{equation} \label{Gammaieq}
    \dim_{\QQ_\ell} V^{\Gamma_i} =  2g_{\Yb_i}
\end{equation}
for $i\geq 1$. Note again that \eqref{Gammaieq} does not hold for $i=0$: by
Theorem \ref{ssredthm2} we have $V^{\Gamma_0}=H^1_\et(\Zb_k,\QQ_\ell)$, and
the dimension of this space is equal to $2g_{\Zb}$ only if the graph of
components of $\Zb$ is a tree. 

The results of \cite{AbbesLuminy}, \S~3 imply that $V$ decomposes,
as a $\Gamma_1$-module, into the direct sum
\begin{equation} \label{Gammaieq2}
    V=H^1_\et(\Yb^{(0)})\oplus H_1(\Delta_{\Yb})\oplus H^1(\Delta_{\Yb}),
\end{equation}
where $\Yb^{(0)}$ is the normalization of $\Yb$, $\Delta_{\Yb}$ is the graph
of components of $\Yb$ and $H_1(\Delta_{\Yb})$ (resp.\ $H^1(\Delta_{\Yb})$)
denotes the (co)homology of $\Delta_{\Yb}$ with $\QQ_\ell$-coefficients.
Using the Hochschild--Serre spectral
sequence, it follows from (\ref{Gammaieq2}) that
\begin{equation} \label{Gammaieq3}
    V^{\Gamma_i}=H^1_\et(\Yb^{(0)}_i)\oplus H_1(\Delta_{\Yb_i})
          \oplus H^1(\Delta_{\Yb_i}),
\end{equation}
for $i\geq 1$. The dimension of the right-hand side of
\eqref{Gammaieq3} is equal to $2g_{\Yb_i}$, proving
\eqref{Gammaieq}. The theorem follows.  \end{proof}

\begin{remark}
The results of this section yield the following ``trivial'' upper
bound for the exponent of conductor, which is easily computed
in the case that the ramification of the extension $L/K$ is known.

If $L/K$ is at most tamely ramified we have already seen that
$\delta=0$, hence we have that $f_{Y/K}=\epsilon\leq 2g(Y)$. 

Suppose that $L/K$ is wildly ramified. Let $h$ be the last jump in the
filtration of higher ramification groups, i.e.\ $h=i$ is maximal with
$\Gamma_i\neq \{0\}$. Then Theorem \ref{ssredthm3} implies that 
$\delta\leq 2g(Y)h |P|/|\Gamma_0|.$
It follows that $f_{Y/K}=\epsilon+\delta\leq 2g(Y)(1+h|P|/|\Gamma_0|).$
\end{remark}

%%%%%%%%%%%%%%%%%%%%%%%%%%%%%%%%%
\section{Admissible covers} \label{admiss}
%%%%%%%%%%%%%%%%%%%%%%%%%%%%%%%%%

\subsection{} \label{admiss1}

Let $K/\Q_p$ be a $p$-adic number field as before and $\phi:Y\to
X=\P^1_K$ a finite cover over $K$. We assume that $Y$ is smooth,
absolutely irreducible and of genus $g\geq 2$.

Let $L/K$ be a finite extension over which $Y$ has semistable
reduction.  There exists a unique semistable model $\cX$ of $X_L$ such
that $\phi$ extends to a finite $\OO_L$-morphism $\cY^{\rm stab}\to\cX$
(\cite{LiuLorenzini99}). Moreover, the stable model $\cY^{\rm stab}$
is the normalization of $\cX$ inside the function field of $Y_L$. If
$\phi$ is a Galois cover with Galois group $G$, then the $G$-action on
$Y_L$ extends to $\cY^{\rm stab}$ and the quotient scheme
$\cX:=\cY^{\rm stab}/G$ has the desired property.

Our strategy for computing the stable reduction of $Y$ is to try to
reverse the process described above: we try to find a semistable model
$\cX$ of $X$ whose normalization $\cY$ with respect to $Y$ is again
semistable. In \cite{ArzdorfWewers} a general method for finding such
semistable model $\cX$ is developed. This approach has been made
algorithmic in \cite{ArzdorfDiss} for cyclic covers $\phi:Y\to \P^1_K$
of degree $p$, were $p$ is the residue characteristic.

The case that $\phi$ is a Galois cover where $p$ does not divide the
order of the Galois group $G$ is much easier than the ``wild''
case. In this case it is well known how to compute the stable
reduction of $Y$.  The main insight goes back to Harris--Mumford
(\cite{HarrisMumford82}) and is based on the notion of {\em admissible
  covers}. We describe the result in \S~\ref{admiss3}.

\subsection{} \label{admiss2}

We first need a generalization of the notion of a (semi)stable model. 

\begin{definition}
  Let $S$ be a scheme, $\cX\to S$ a semistable curve over $S$ and
  $s_1,\ldots,s_r:S\to\cX^{\rm sm}$ disjoint sections supported in the smooth
  locus of $\cX\to S$. Then $(\cX/S,s_1,\ldots,s_r)$ is called a {\em pointed
    semistable curve} over $S$ (cf.\ \cite{Knudsen83}). Since we are usually
  not interested in ordering the sections $s_i$, we write $\cD\subset\cX$ for
  the relative divisor composed of the images of the $s_i$ and call
  $(\cX,\cD)$ a {\em marked semistable curve}. The divisor $\cD\subset\cX$ is
  called a {\em marking} of $X/S$. 
\end{definition}

Let $K$ be a local field as before and $X/K$ a smooth projective curve.  Let
$D\subset X$ be a smooth relative divisor of degree $d$ over $\Spec K$.  We
say that $D$ {\em splits} over $K$ if $D$ consist of $d$ distinct $K$-rational
points. We say that the marked curve $(X,D)$ has {\em semistable reduction} if
$D$ splits and the pair $(X,D)$ extends to a marked semistable curve
$(\cX,\cD)$ over $\OO_K$. If this is the case, $(\cX,\cD)$ is called a {\em
  semistable model} of $(X,D)$.

The semistable reduction theorem extends to the marked case, as follows.

\begin{proposition} \label{markedprop} 
  Let $(X,D)$ be as above. 
  \begin{enumerate}
  \item
    There exists a finite extension $L/K$ such that $(X_L,D_L)$ has semistable
    reduction.
  \item
    Assume, moreover, that $2g(X)-2+d>0$. Then there exists a unique minimal
    semistable model $(\cX,\cD)$ (which we call the {\em stable model} of
    $(X,D)$).
  \item If $g=0$ and $D$ splits then $(X,D)$ has semistable
    reduction. 
  \item
    Assume that $g=0$, $d\geq 3$ and that $D$ splits. Let $(\Xb,\Db)$ be the
    special fiber of the stable model $(\cX,\cD)$ of $(X,D)$. Then $\Xb$ is a
    tree of projective lines. Every irreducible component $\Xb_v$ of $\Xb$ has
    at least three points which are either singular points of $\Xb$ or belong
    to the support of the divisor $\Db$. 
  \end{enumerate}
\end{proposition}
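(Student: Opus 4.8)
The plan is to prove the four parts of Proposition~\ref{markedprop} essentially by reducing everything to the classical semistable reduction theorem for curves (already invoked via \cite{DeligneMumford69}) together with standard facts about stable pointed curves.

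For part~(1), first enlarge $K$ so that all $d$ points of $D$ become rational; after this base change $D$ splits, and we may fix disjoint sections. The trick is to reduce to the unmarked case: if $2g(X)-2+d>0$ one can choose an auxiliary finite (possibly ramified) cover $Z\to X$ which is \'etale away from $D$ and ramified over each point of $D$, arranged so that $Z$ has genus $\geq 2$ (a Kummer or cyclic cover of sufficiently large degree prime to $p$ works, or one simply cites the theory of moduli of stable pointed curves $\overline{\mathcal M}_{g,r}$ and its properness, whichever the authors prefer). Apply ordinary semistable reduction to $Z$ over some $L/K$; the $G$-action descends, and the quotient of a semistable model of $Z_L$ by $G$, together with the images of the marked points, is the desired marked semistable model of $(X_L,D_L)$. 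Here one must check that the sections remain disjoint and land in the smooth locus --- this uses that the branch points were distinct and that the cover was chosen tamely ramified there. In the genus-zero case, which is needed for part~(3), the small-$d$ cases $d\leq 2$ have to be handled separately, but they are trivial since $\P^1_{\OO_L}$ with $\leq 2$ disjoint sections is already a marked semistable model.

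For part~(2), uniqueness and existence of a minimal (stable) model follow from the standard dichotomy argument: given any semistable model $(\cX,\cD)$, an irreducible component of the special fiber is \emph{unstable} precisely when it is a genus-zero curve meeting the rest of the special fiber together with $\cD$ in at most two points, and contracting all such components (iteratively) yields a model whose special fiber is a stable pointed curve. The condition $2g(X)-2+d>0$ guarantees the generic fiber is itself stable, so the contraction never collapses the whole model; minimality and uniqueness then come from the valuative criterion of properness for $\overline{\mathcal M}_{g,d}$, or equivalently from the universal property that any two stable models are dominated by a common semistable one and hence are isomorphic. Part~(3) is the special case $g=0$: once $D$ splits, $(\P^1_{\OO_K},\cD)$ (with $\cD$ the closure of $D$) is already a marked semistable model, since $\P^1_{\OO_K}$ is smooth and the sections are disjoint --- no base change is needed at all.

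Part~(4) is a combinatorial consequence of parts~(2) and~(3). Since $g(X)=0$, the generic fiber of the stable model is $\P^1_L$, and the arithmetic genus is preserved under specialization, so the special fiber $\Xb$ has arithmetic genus $0$; a connected semistable curve of arithmetic genus zero is exactly a tree of $\P^1$'s (each component smooth of genus $0$, the dual graph a tree). The three-point condition on each component $\Xb_v$ is precisely the stability condition for a marked semistable curve: a genus-zero component meeting the rest of $\Xb$ and carrying marked points in $\leq 2$ points total would be unstable, contradicting minimality. I would spell this out by a short induction on the number of components, using that a tree always has a leaf and that a leaf component must therefore carry at least two marked points to be stable. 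The main obstacle in the whole proof is part~(1) in the delicate range of small $g$ and $d$: making the reduction to the classical unmarked theorem clean requires either constructing the auxiliary cover carefully (controlling ramification, distinctness of branch points, and tameness with respect to $p$) or citing the properness of $\overline{\mathcal M}_{g,d}$ as a black box; I would favor the latter, stating it as Knudsen's theorem (\cite{Knudsen83}), which makes (1), (2) and (4) immediate and keeps the argument short.
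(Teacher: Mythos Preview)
Your overall strategy --- reduce (1) and (2) to the classical semistable reduction theorem combined with Knudsen's properness of $\overline{\mathcal M}_{g,d}$, and read off (4) from the stability condition --- is exactly what the paper does: it simply cites \cite{DeligneMumford69} plus \cite{Knudsen83} for (1)--(2), and \cite{GHvdP88} for (3)--(4). Your sketch of (4) via arithmetic genus and the instability of a genus-zero component with $\leq 2$ special points is correct and is precisely the content of the stability condition.

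However, your argument for (3) contains a genuine error. You assert that once $D$ splits, $(\P^1_{\OO_K},\cD)$ with $\cD$ the Zariski closure of $D$ is already a marked semistable model because ``the sections are disjoint''. This is false: distinct $K$-rational points of $\P^1_K$ can specialize to the same point of $\P^1_{\FF_K}$ (e.g.\ $x=0$ and $x=p$ over $\QQ_p$), so the closures of the sections need not be disjoint in the special fiber, and $(\P^1_{\OO_K},\cD)$ is then \emph{not} a pointed semistable curve in the sense of the definition. The actual content of (3) is that one can repair this by iterated blow-ups of $\P^1_{\OO_K}$ to separate the colliding sections, producing a semistable model whose special fiber is a tree of $\P^1$'s --- this is exactly the construction carried out in \cite{GHvdP88}, which the paper cites. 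Alternatively, for $d\geq 3$ you can deduce (3) from Knudsen as well: since $D$ splits, $(X,D)$ gives a $K$-point of $\mathcal M_{0,d}$, and properness of $\overline{\mathcal M}_{0,d}$ over $\Spec\ZZ$ extends it to an $\OO_K$-point without any base change. Either route is fine, but the naive closure argument you wrote does not work.
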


\begin{proof}
  Statements (1) and (2) follow from the Semistable Reduction Theorem
  (\S~\ref{etale3}) combined with the main result of
  \cite{Knudsen83}.

  Statements (3) and (4) are proved in \cite{GHvdP88}. In that paper one
  also finds a much more direct proof for (1) and (2) in the case that $g=0$.
\end{proof}

\subsection{} \label{admiss3}

We  return to the situation from the beginning of this section.  Let
$\phi:Y\to X=\P^1_K$ be a finite cover of the projective line, where $Y$
is smooth and absolutely irreducible over $K$. 

Let $D\subset X$ be the {\em branch locus} of $\phi$, i.e.\ the reduced closed
subscheme  exactly supporting the branch points of $\phi$. Then $D\to\Spec K$
is a finite flat morphism. Since the characteristic of $K$ is zero and $D$ is
reduced by definition, $D\to\Spec K$ is actually \'etale. The geometric points
of $D$ are exactly the branch points of $\phi_{K^\alg}$. Let $d$ denote the
degree of $D$, i.e.\ the number of branch points of $\phi_{\Kb}$. We make the
following additional assumptions on~$\phi$.
\begin{assumption} \label{phiass} \ 
\begin{itemize}
\item[(a)]
  The cover $\phi$ is {\em potentially Galois}, i.e.\ the base change
  $\phi_{K^\alg}:Y_{K^\alg}\to X_{K^\alg}$ is a Galois cover. 
\item[(b)]
   The 
  characteristic $p$ of the residue field of $K$ does not divide 
the order of the Galois group $G$ of  $\phi_{K^\alg}$.
\item[(c)]
  We have  $g(Y)\geq 2$. 
\end{itemize} 
\end{assumption}

Assumption \ref{phiass}.(c) implies that $d\geq 3$.

Let $L/K$ be a finite extension which splits $D$. Then $(X,D)$ has
semistable reduction over $L$ (Proposition \ref{markedprop}.(3)). Let
$(\cX,\cD)$ denote the stable model of $(X_L,D_L)$ and $\cY$ the
normalization of $\cX$ in the function field of $Y$.  Then $\cY$ is a
normal integral model of $Y$ over $\OO_L$.  Let
$\Yb:=\cY\otimes \FF_L$ be the special fiber and $\phib:\Yb\to \Xb$
the induced map.

An irreducible component $W$  of $\Yb$ corresponds to a discrete valuation
$\eta_W$ of the function field of $Y_L$ (since $W$ is a prime divisor on
$\cY$). Let $m_W$ denote the ramification index of $\eta_W$ in the extension
of function fields induced by $\phi$. The integer $m_W$ is called the {\em
  multiplicity} of the component $W$. (Alternatively, one can define $m_W$ as
the length of $\OO_{\cY,W}/(\pi)$, where $\OO_{\cY,W}$ is the local ring at the
generic point of $W$ and $\pi$ is a prime element of $\OO_L$.)

\begin{theorem} \label{admissthm1} Let $L/K$ and $\cY$ be as above. Assume
  that $\phi_L:Y_L\to X_L$ is a Galois cover and that $m_W=1$ for every
  irreducible component $W$ of $\Yb$.  Then $\cY$ is a quasi-stable model of
  $Y_L$. In particular, $\cY$ is semistable.
\end{theorem}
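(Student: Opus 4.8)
The plan is to work étale-locally on $\cX$ and check the two defining properties of a quasi-stable model: that $\cY$ is semistable over $\OO_L$, and that the $\Gamma = \Gal(L/K)$-action on $Y_L$ extends to $\cY$. The second point is easy and I would dispose of it first: by Proposition~\ref{markedprop}.(2) the stable model $(\cX,\cD)$ of $(X_L,D_L)$ is canonical, hence the semilinear $\Gamma$-action on $(X_L,D_L)$ extends to $(\cX,\cD)$; since $\cY$ is defined as the normalization of $\cX$ in the function field of $Y_L$ and the function field extension is $\Gamma$-stable (here one uses that $\phi$ is defined over $K$, so $Y_L = Y\otimes_K L$ carries the tautological $\Gamma$-action), the action lifts uniquely to $\cY$ by the universal property of normalization. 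So the entire content of the theorem is the semistability of $\cY$.

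For semistability, I would argue that $\cY\to\Spec\OO_L$ is flat (automatic, since $\cY$ is integral and dominates $\OO_L$) and that its special fiber $\Yb$ is reduced with at most ordinary double points. Reducedness of $\Yb$ is where the hypothesis $m_W = 1$ enters: if $\pi$ is a uniformizer of $\OO_L$, then $\Yb = \cY\otimes\FF_L$ is cut out by $\pi = 0$, and $\cY$ being normal, the local ring $\OO_{\cY,W}$ at the generic point of each component $W$ is a DVR; the multiplicity $m_W$ is by definition the valuation of $\pi$ there, so $m_W = 1$ means $\pi$ is a uniformizer at every generic point of $\Yb$, i.e. $\Yb$ has no embedded or multiple components and is generically reduced. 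Since $\cY$ is normal, hence $S_2$, and $\Yb$ is a Cartier divisor, $\Yb$ is $S_1$; combined with generic reducedness this gives $\Yb$ reduced. The remaining and main task is the local structure at a point $\wb\in\Yb$ lying over a singular point $\xb\in\Xb$ of the semistable curve $\Xb$ (away from such points $\phib$ is étale-locally a cover of a smooth curve, tamely ramified because $p\nmid |G|$, and one checks directly that $\cY$ is smooth there, or has a node in the split case).

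The heart of the argument is therefore a local computation at a node $\xb$ of $\Xb$. After passing to the completion (or a suitable étale neighborhood), $\OO_{\cX,\xb}^\wedge \cong \OO_L[[u,v]]/(uv - \pi^e)$ for some $e\geq 1$. Since $p\nmid |G|$, the cover $Y_L\to X_L$ is tamely ramified, so each branch of $\cY$ over $\wb$ is, after a further tame base change absorbed into the normalization, of the Kummer type $\OO_L[[s,t]]/(st - \pi^{e'})$ with $s^a = u$, $t^b = v$ for integers $a,b$ prime to $p$ — this is the standard local model of admissible covers in the Harris--Mumford picture, and normality of $\cY$ forces exactly this shape. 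One then reads off that $m_W = 1$ along each of the two branches through $\wb$ is equivalent to $e' = \gcd$-type conditions forcing the local ring to be regular in the $s,t$ variables, so $\OO_{\cY,\wb}^\wedge \cong \OO_L[[s,t]]/(st-\pi)$ or a product of such, i.e. $\wb$ is an ordinary double point (or a smooth point) of $\Yb$. The main obstacle is bookkeeping the interplay between the ramification indices of $\phi$ along the two branches at $\xb$, the thickness $e$ of the node of $\cX$, and the normalization: one must show that the only way the multiplicities $m_W$ can all equal $1$ is that the local normalized equation is the thickness-one node. I expect this to be a routine but slightly delicate lattice/valuation computation, essentially the ``admissibility'' relation of Harris--Mumford relating the ramification indices on the two sides of a node; with that in hand, semistability of $\Yb$ follows, and hence $\cY$ is quasi-stable.
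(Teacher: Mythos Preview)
Your argument is essentially correct and is, in effect, a sketch of the proof of Liu--Lorenzini, Theorem~2.3, which is exactly what the paper invokes: the paper's entire proof reads ``The proof is a straightforward adaptation of the proof of \cite{LiuLorenzini99}, Theorem 2.3 to our situation.'' So you have supplied the content the paper delegates to that reference. The extension of the $\Gamma$-action via uniqueness of $(\cX,\cD)$ and the universal property of normalization, reducedness of $\Yb$ from $R_0+S_1$, smoothness of $\cY$ over $\cX^{\rm sm}$ by purity and Abhyankar's lemma, and a local Kummer computation at the nodes --- this is precisely the shape of that argument.

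Two small corrections. First, at a node $\xb$ with $\hat{\OO}_{\cX,\xb}\cong\OO_L[[u,v]]/(uv-\pi^e)$, the stabilizer $G_{\wb}$ is cyclic of some order $n$ prime to $p$, and the admissibility relation forces the ramification indices on the two branches to coincide (so $a=b=n$ in your notation); the hypothesis $m_W=1$ then amounts to $n\mid e$, and one finds $\hat{\OO}_{\cY,\wb}\cong\OO_L[[s,t]]/(st-\pi^{e/n})$. The thickness is $e/n$, not $1$ as you write --- but any positive thickness gives an ordinary double point, so semistability follows regardless. Second, your parenthetical ``or has a node in the split case'' over smooth points of $\Xb$ should be dropped: over $\cX^{\rm sm}$ the cover is \'etale (by purity, using $m_W=1$) or tamely ramified along the sections $\cD$, and in either case $\cY$ is smooth there. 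All nodes of $\Yb$ lie over nodes of $\Xb$; this is exactly the content of Remark~\ref{admissrem2}(2).
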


\begin{proof} 
The proof is a straightforward adaptation of the proof of
\cite{LiuLorenzini99}, Theorem 2.3 to our situation.
\end{proof}

\begin{remark} \label{admissrem2}
\begin{enumerate}
\item The quasi-stable model $\cY$ from Theorem \ref{admissthm1} is in
  general not the stable model of $Y$. Furthermore, the extension
  $L/K$ is in general not the minimal extension over which $Y$ has
  semistable reduction.
\item A key step in the proof of Theorem \ref{admissthm1} is showing
  that $\cY\to\cX$ is an { admissible cover} (see
  \cite{HarrisMumford82} or \cite{tame}).  For the purpose of the
  present paper, it suffices to know that this implies that smooth
  (resp.\ singular) points of $\Yb$ map to smooth (resp.\ singular)
  points of $\Xb$. Since the irreducible components of $\Xb$ are
  smooth (see \S~\ref{kummer2} below), it follows that the same holds
  for the irreducible components of $\Yb$.
\end{enumerate}
\end{remark}

\begin{corollary} \label{admisscor1} Let $\phi:Y\to X=\P^1_K$ be a cover
  satisfying Assumption \ref{phiass}, with branch locus $D\subset
  X$. Let $L_0/K$ be a finite extension which splits $D$ and such that
  $\phi_{L_0}$ is Galois. There exists a tamely ramified extension
  $L/L_0$ over which $Y$ has semistable reduction.
\end{corollary}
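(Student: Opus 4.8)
The strategy is to reduce to Theorem \ref{admissthm1}. By Proposition \ref{markedprop}.(2) and (3), since $X=\P^1$, $d\ge 3$ and $D$ splits over $L_0$, the marked curve $(X_{L_0},D_{L_0})$ has a stable model $(\cX,\cD)$ over $\OO_{L_0}$, and for every finite extension $L/L_0$ the marked curve $(X_L,D_L)$ has a stable model $(\cX',\cD')$ over $\OO_L$. Write $\cY$ (resp.\ $\cY'$) for the normalization of $\cX$ (resp.\ $\cX'$) inside $K(Y_{L_0})$ (resp.\ $K(Y_L)$), and $\Yb$ (resp.\ $\Yb'$) for its special fibre. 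Since $\phi_{L_0}$ is Galois, so is $\phi_L$. Hence by Theorem \ref{admissthm1} it suffices to produce a tamely ramified extension $L/L_0$ such that every irreducible component $W'$ of $\Yb'$ has multiplicity $m_{W'}=1$; for then $\cY'$ is quasi-stable, in particular semistable, and $Y$ has semistable reduction over $L$.

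The choice of $L$ is dictated by an a priori bound on the multiplicities over $L_0$. Let $W$ be a component of $\Yb$ lying over the component $W'$ of $\Xb$. Since $\cX$ is semistable, the discrete valuation $v_{W'}$ of $K(X_{L_0})$ associated with $W'$ satisfies $v_{W'}(\pi_0)=1$ for a uniformizer $\pi_0\in\OO_{L_0}$, so $m_W=e(v_W/v_{W'})$, the ramification index of the valuation $v_W$ of $K(Y_{L_0})$ lying over $v_{W'}$. As $K(Y_{L_0})/K(X_{L_0})$ is Galois with group $G$, this index divides $|G|$; in particular every $m_W$ is prime to $p$. I would therefore set $N:=\lcm_W m_W$ --- a divisor of $|G|$, hence prime to $p$ --- and take $L:=L_0(\varpi)$ with $\varpi^N=\pi_0$, a totally and tamely ramified extension of $L_0$ of degree $N$.

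It remains to check that over this $L$ all multiplicities of $\cY'$ equal $1$; this is a local question on $\cX'$ and is the technical heart of the argument. Over (the image in $\cX'$ of) a component $W'$ of $\Xb$, the relevant complete local ring of $\cX'$ is the base change of the discrete valuation ring $\widehat{\OO}_{\cX,W'}$, now with uniformizer $\varpi$; since $N$ is a multiple of $m_W$ and the extension $\widehat{\OO}_{\cY,W}/\widehat{\OO}_{\cX,W'}$ of complete discrete valuation rings is tame, adjoining $\varpi$ ``absorbs'' the ramification along $W'$: the extension $K(Y_L)/K(X_L)$ becomes unramified at $v_{W'}$, so the component of $\Yb'$ over $W'$ has multiplicity $1$. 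Over a node $\xi$ of $\Xb$, the complete local ring of $\cX$ is $\OO_{L_0}[[x,y]]/(xy-\pi_0)$, and the normalization of this in $K(Y_{L_0})$ is, componentwise, of the shape $\OO_{L_0}[[s,t]]/\big((st)^m-\pi_0 u\big)$ for a unit $u$ and one and the same integer $m\mid|G|$ along both branches --- the key point being that a connected tame cover of the punctured formal neighbourhood of $\xi$ (an ``annulus'' over $L_0$) is totally ramified of the same index along its two ends, so the two branches of $\cY$ through a point over $\xi$ carry equal multiplicity. Writing $N=m\ell$ and extracting roots, this local ring turns into a product of copies of $\OO_L[[s,t]]/(st-\varpi^{\ell})$, whose special fibre $\FF_L[[s,t]]/(st)$ is reduced; hence $\cY'$ is semistable over $\xi$ as well, again with all multiplicities $1$. (This last step is the standard local analysis underlying the notion of admissible cover; see \cite{HarrisMumford82}, \cite{tame}.)

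Combining these local computations, $\cY'$ has all $m_{W'}=1$, and Theorem \ref{admissthm1} applies: $\cY'$ is quasi-stable, in particular semistable, so $Y$ acquires semistable reduction over $L$. Since $L/L_0$ is tamely ramified by construction, the corollary follows. The only genuine obstacle is the local analysis at the nodes of $\Xb$ sketched above; everything else --- the existence of the stable models, the divisibility $m_W\mid|G|$, and the final appeal to Theorem \ref{admissthm1} --- is routine bookkeeping. (Note that we use only the ``in particular, semistable'' clause of Theorem \ref{admissthm1}, which is insensitive to whether $L/K$ is Galois, so we may take $L$ simply as a tame extension of $L_0$.)
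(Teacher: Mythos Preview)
Your argument is correct and follows the same strategy as the paper: set $e=\lcm_W m_W$ (a divisor of $|G|$, hence prime to $p$), pass to a tame extension $L/L_0$ of ramification index divisible by $e$, check that the component multiplicities over $L$ drop to $1$, and invoke Theorem~\ref{admissthm1}. The paper handles the multiplicity check in one line via Abhyankar's Lemma (\cite{SGA1}, Expos\'e~X, Lemma~3.6), whereas you argue it by hand; your additional local analysis at the nodes of $\Xb$ is superfluous, since the hypothesis of Theorem~\ref{admissthm1} concerns only the $m_W$ at generic points of components, and the semistability at the nodes is already part of its conclusion.
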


\begin{proof} Let $(\cX_0,\cD_0)$ be the stable model of the marked curve
  $(X_{L_0},D_{L_0})$ and $\cY_0$ the normalization of $\cX_0$ in
  $Y_{L_0}$. Let $e$ be the lcm of all multiplicities $m_W$, where $W$ runs
  over the irreducible components of the special fiber of $\cY_0$. It is clear
  that $e$ divides the order of the Galois group of $\phi_{L_0}$ and is
  therefore prime to $p$.

Let $L/L_0$ be a tamely ramified extension with ramification index
divisible by $e$. Let $(\cX,\cD)$ be the base change of
$(\cX_0,\cD_0)$ to $\OO_L$; this is the stable model of the marked
curve $(X_L,D_L)$. Let $\cY$ be the normalization of $\cX$ in
$Y_L$. It follows from Abhyankar's lemma (\cite{SGA1}, Expos\'e X,
Lemma 3.6, p.~297) that the multiplicities of the irreducible
components of $\Yb$ are one.  Theorem \ref{admissthm1} implies that
$\cY$ is semistable. This proves the corollary.
\end{proof}

%%%%%%%%%%%%%%%%%%%%%%%%%%%%%%%%%
\section{Superelliptic curves} \label{kummer}
%%%%%%%%%%%%%%%%%%%%%%%%%%%%%%%%%

\subsection{} \label{kummer1}

As before, $K/\Q_p$ is a finite extension. Let $\phi:Y\to X:=\P^1_K$ be the
cover of curves which is birationally determined by an equation of the form
$$
  y^n = f(x),
$$ where $f\in K[x]$ is a nonconstant polynomial in the natural
  parameter $x$ of the projective line $X=\P^1_K$ and
  $\phi(x,y)=x$. In other words, $Y$ is the smooth projective curve
  with function field $F(Y):=K(x,y\mid y^n=f(x))$. We assume that $f$
  has no nontrivial factor which is an $n$th power in $K[x]$. This
  implies that every zero of $f$ corresponds to a branch point of
  $\phi$.

Let $L_0/K$ be the splitting field of $f$ and $S\subset L_0$ the set of roots of
$f$. Then we can write
\[
    f=c\prod_{\alpha\in S}(x-\alpha)^{a_\alpha},
\]
with $c\in K^\times$ and $a_\alpha\in\N$. We impose the
following conditions on $f$ and $n$.
\begin{assumption} \label{fnass}
\begin{itemize}
\item[(a)]
  We have $\gcd(n,a_\alpha\mid \alpha\in S)=1$. 
\item[(b)]
  The exponent $n$ is $\geq 2$ and prime to $p$.
\item[(c)]
  We have $g(Y)\geq 2$.
\end{itemize}
\end{assumption}

We note that Assumption \ref{fnass} implies Assumption
\ref{phiass}. In fact, the base change of $\phi$ to $K^{\rm ur}$ is a
Galois cover with cyclic Galois group of order $n$, branched over the
roots of $f$ and possibly also over $\infty$. The ramification index
of the points of $\phi^{-1}(\infty)$ is $n/\gcd(n, \sum_{\alpha\in S} a_\alpha)$.

Our goal is to compute the stable reduction of $Y$ in terms of the data $f$
and $n$, following the procedure suggested by Theorem \ref{admissthm1}
and Corollary \ref{admisscor1}.

\subsection{} \label{kummer2}

Let $D\subset X$ be the branch divisor of $\phi$. Let $L/L_0$ be a
finite extension. Then $D$ splits over $L$, and $D_L\subset \P^1(L)$ satisfies
\[
     D_L = \begin{cases}
        S & \text{if $\sum_{\alpha\in S} a_\alpha\equiv 0 \pmod{n}$,} \\
        S\cup\{\infty\} & \text{otherwise.}
           \end{cases}
\]
Assumption \ref{fnass}.(c) implies that $|D_L|\geq 3$. Therefore the
marked curve $(X_L,D_L)$ has a stable model $(\cX,\cD)$ (Proposition
\ref{markedprop}). In the rest of this section we describe the special
fiber $(\Xb,\Db)$ of $(\cX,\cD)$ explicitly, in terms of the divisor
$D_L\subset X_L$.

We first introduce some notation. Let $\Delta=(V(\Delta),E(\Delta))$
denote the graph of components of $\Xb$. This is a finite, undirected
tree whose vertices $v\in V(\Delta)$ correspond the irreducible
components $\Xb_v\subset\Xb$. Two vertices $v_1,v_2$ are adjacent if
and only if the components $\Xb_{v_1}$ and $\Xb_{v_2}$ meet in a
(necessarily unique) singular point of $\Xb$. For an element
$\alpha\in D_L$ we denote by $\alphab\in\Db\subset\Xb$ its
specialization. We obtain a map $\psi:D_L\to V(\Delta)$ defined by 
$\alphab\in\Xb_{\psi(\alpha)}$. Proposition \ref{markedprop}.(4)
states that $(\Delta,\psi)$ is a {\em stably marked tree}
(\cite{GHvdP88}, Definition 1.2). This we mean that $\Delta$ is an
undirected tree and for each vertex $v\in V(\Delta)$ we have
\[
   {\rm val}(v):=|\psi^{-1}(v)|+|\{v'\in V(\Delta) \mid 
                     \{v,v'\}\in E(\Delta)\}| \geq 3.
\] 

Let us call an $L$-linear isomorphism $\lambda:X_L\liso\P^1_L$ a {\em
  chart}. Since $X_L=\P^1_L$ by definition, a chart may be represented by an
element in $\PGL_2(L)$. We call two charts $\lambda_1,\lambda_2$ {\em
  equivalent} if the automorphism
$\lambda_2\circ\lambda_1^{-1}:\P^1_L\liso\P^1_L$ extends to an automorphism of
$\P^1_{\OO_L}$, i.e.\ corresponds to an element of $\PGL_2(\OO_L)$. In
other words, an equivalence class of charts corresponds to a right coset in
$\PGL_2(\OO_L)\backslash\PGL_2(L)$.

Let $T$ denote the set of triples $t=(\alpha,\beta,\gamma)$ of pairwise distinct
elements of $D_L$. For $t=(\alpha,\beta,\gamma)$ 
we let $\lambda_t$ denote the unique chart such that
\[
   \lambda_t(\alpha)=0,\quad \lambda_t(\beta)=1,\quad
           \lambda_t(\gamma)=\infty.
\] 
Explicitly, we have
\begin{equation} \label{lambdateq}
   \lambda_t(x)=\frac{\beta-\gamma}{\beta-\alpha}\cdot\frac{x-\alpha}{x-\gamma},
\end{equation}
where we interpret this formula in the obvious way  if
$\infty\in\{\alpha,\beta,\gamma\}$. 
The equivalence relation $\sim$ on charts defined above induces an equivalence
relation on $T$, which we denote by $\sim$ as well. 

\begin{proposition} \label{treeprop1} 
  Let $(\cX,\cD)$ be the stable model of $(X_L,D_L)$. 
  \begin{enumerate}
  \item For all $t\in T$ the chart $\lambda_t$ extends to a proper
    $\OO_L$-morphism $\lambda_t:\cX\to\P^1_{\OO_L}$. Its reduction to the
    special fiber is a contraction morphism
    \[
         \lambdab_t:\Xb\to\P^1_{\FF_L}
    \]
    which contracts all but one component of $\Xb$ to a closed point.
  \item
    For every component $\Xb_v$ there exists $t\in T$ such that $\lambdab_t$
    does not contract $\Xb_v$ (and hence induces an isomorphism
    $\Xb_v\liso\P^1_{\FF_L}$). 
  \item
    The equivalence class of the chart $\lambda_t$ in (2) is uniquely
    determined by the component $\Xb_v$. We therefore obtain a bijection
    $V(\Delta)\cong T/_\sim$. 
  \end{enumerate}
\end{proposition}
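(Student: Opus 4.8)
The plan is to prove the three statements in order, using the explicit description of the stable model of $(X_L, D_L)$ together with the valuative criterion of properness and the combinatorial structure of the stably marked tree $(\Delta, \psi)$. For part (1), I would argue as follows. Since $\cX$ is proper over $\OO_L$ and $\P^1_{\OO_L}$ is separated, any $L$-morphism on generic fibers extending to a rational $\OO_L$-map is automatically a morphism wherever $\cX$ is normal; but $\cX$ is regular (being a stable model, hence the special fiber is at worst nodal and one checks $\cX$ is regular in codimension one, which suffices by the valuative criterion since $\P^1_{\OO_L}$ is proper). Concretely, the composite $\cX \to \P^1_{\OO_L}$ is defined because $\cX$ is proper and $\P^1$ is proper, so the rational map extends over the generic points of all components of $\Xb$, and then over all of $\cX$ since the indeterminacy locus of a rational map to $\P^1$ from a normal surface has codimension $\geq 2$. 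The reduction $\lambdab_t : \Xb \to \P^1_{\FF_L}$ is then a morphism from a connected nodal curve of arithmetic genus zero to $\P^1$; since $\Xb$ is a tree of $\P^1$'s, $\lambdab_t$ has degree $1$ on exactly one component (the one whose generic point is not ``collapsed'' under the valuation associated to $\lambda_t$) and contracts the rest — this is because a finite morphism $\P^1 \to \P^1$ respecting three marked points $0,1,\infty$ must be the identity, while on any other component the three special points cannot all be distinct in the image.

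For part (2), fix a component $\Xb_v$. By the stably-marked condition $\mathrm{val}(v) \geq 3$, so $\Xb_v$ carries at least three special points (marked points or nodes). Each node of $\Xb_v$ lies on a connected subtree of $\Xb \setminus \Xb_v$, and by Proposition \ref{markedprop}.(4) every such subtree, and $\Xb_v$ itself, supports at least one marked point from $\Db$; tracing through the tree structure one produces three pairwise distinct elements $\alpha, \beta, \gamma \in D_L$ whose specializations ``lie in three different directions'' from $\Xb_v$ — more precisely, $\alpha, \beta, \gamma$ specialize into $\Xb_v$ or into three distinct connected components of $\Xb \setminus \{\text{generic point of } \Xb_v\}$. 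Setting $t = (\alpha, \beta, \gamma)$, the chart $\lambda_t$ has the property that the closures of $0, 1, \infty$ in $\cX$ meet $\Xb$ in three distinct points that are not all on one contracted component, which forces $\lambdab_t$ to restrict to an isomorphism on $\Xb_v$. This is the step I expect to be the main obstacle: one must set up the bookkeeping relating triples in $D_L$ to ``directions'' in the tree $\Delta$ carefully enough to guarantee existence (and, for part (3), uniqueness up to $\sim$) of the triple, and check that the valuation-theoretic picture matches the combinatorial one.

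For part (3), suppose $\lambdab_t$ and $\lambdab_{t'}$ both induce isomorphisms on the same component $\Xb_v$. Then $\lambda_{t'} \circ \lambda_t^{-1} : \P^1_L \to \P^1_L$ reduces, on the component $\lambdab_t(\Xb_v) = \P^1_{\FF_L}$, to an isomorphism $\P^1_{\FF_L} \liso \P^1_{\FF_L}$; since $\lambda_{t'} \circ \lambda_t^{-1}$ is an element of $\PGL_2(L)$ reducing to a well-defined element of $\PGL_2(\FF_L)$, it lies in $\PGL_2(\OO_L)$, i.e.\ $t \sim t'$. Conversely, if $t \sim t'$ then $\lambda_t$ and $\lambda_{t'}$ differ by an element of $\PGL_2(\OO_L)$, hence extend to the same contraction up to an automorphism of $\P^1_{\OO_L}$ and pick out the same component. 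Combining (2) and (3), the assignment $v \mapsto [\lambda_t]$ (for any $t$ as in (2)) is a well-defined map $V(\Delta) \to T/_\sim$; it is surjective because every triple $t \in T$ picks out the unique component not contracted by $\lambdab_t$ via part (1), and injective because distinct components are separated by the charts (two different components cannot be simultaneously non-contracted by the same $\lambdab_t$, again by part (1)). Hence $V(\Delta) \cong T/_\sim$, as claimed.
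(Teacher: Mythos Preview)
Your argument for part~(1) contains a genuine gap in the extension step. You assert that $\cX$ is regular, but this is false in general: the stable model of a marked genus-zero curve can have nodes of thickness $>1$ (locally $uv=\pi^n$ with $n\geq 2$), which are normal but singular points of $\cX$. More seriously, even granting normality, the claim that ``the indeterminacy locus of a rational map to $\P^1$ from a normal surface has codimension $\geq 2$'' does not yield a morphism: codimension $\geq 2$ on a surface only means the indeterminacy locus is a finite set of closed points, not that it is empty. The map $(x,y)\mapsto (x:y)$ on $\A^2$ is the standard obstruction, and nothing in your argument rules out its analogue. In fact, for a general $\lambda\in\PGL_2(L)$ the rational map $\cX\dashrightarrow\P^1_{\OO_L}$ does \emph{not} extend: it extends precisely when the Gauss valuation attached to the coordinate $\lambda^*(x)$ is one of the valuations $\eta_v$ already present in $\cX$. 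What makes $\lambda_t$ special is that $t=(\alpha,\beta,\gamma)$ consists of \emph{marked points}, whose closures are disjoint sections of $\cX$ landing in the smooth locus; this forces the associated Gauss valuation to be among the $\eta_v$, and your argument never uses this.

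The paper does not attempt a self-contained argument here: it simply invokes Lemmas~4 and~5 of \cite{GHvdP88}, where the stable model is constructed explicitly by iterated blowups of $\P^1_{\OO_L}$ and the contraction morphisms $\lambda_t$ are obtained as the blow-down maps, with $\lambdab_t$ then identified as the generalized cross-ratio map. If you want to keep your more direct line, one clean fix is: pass to a regular semistable model $\cX^{\reg}\to\cX$ by blowing up the thick nodes; on $\cX^{\reg}$ the tree of $\P^1$'s can be contracted, leaf by leaf via $(-1)$-curves, down to any chosen component, in particular to the component carrying the Gauss valuation of $x_t$; finally check that this contraction factors through $\cX$ because the exceptional chains of $\cX^{\reg}\to\cX$ lie over nodes and hence are themselves contracted. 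Your sketches for parts~(2) and~(3) are sound (the valuation-theoretic argument you give for~(3), that two charts inducing the same Gauss valuation differ by an element of $\PGL_2(\OO_L)$, is exactly right), but they rest on~(1).
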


\begin{proof} By combining Lemma 5 with the corollary to 
Lemma 4 of \cite{GHvdP88}, we
see that for every $t=(\alpha,\beta,\gamma)$ there exists a unique
proper $\OO_L$-morphism $\lambda:\cX\to\P^1_{\OO_L}$ such that
$\lambda(\alpha)=0$, $\lambda(\beta)=1$,
$\lambda(\gamma)=\infty$. Clearly, the restriction of $\lambda$ to the
generic fiber is equal to the chart $\lambda_t$. From now on we write
$\lambda=\lambda_t$.

The restriction of $\lambda_t$ to the special fiber is a proper
$\FF_L$-morphism $\lambdab_t:\Xb\to\P^1_{\FF_L}$. Since $(\Xb,\Db)$ is
stably marked, the morphism $\lambdab_t$ is uniquely determined by its
restriction to $\Db$. For $\delta\in D_L$ we have
$\lambdab_t(\bar{\delta})=\bar{\lambda_t(\delta)}$ by
construction. Therefore, $\lambdab_t$ is equal to the {\em generalized
  cross-ratio map} defined in \cite{GHvdP88}, \S~1. Statements (1)-(3)
follow immediately from the properties of this map proved in {\em
  loc.cit.}
\end{proof}

\begin{remark} \label{treerem1}
For $t=(\alpha,\beta,\gamma)\in T$ consider the map
\[
    \phi_t:D_L\to \P^1_{\FF_L}, \quad
     \delta \mapsto \lambdab_t(\bar{\delta}),
\]
where $\lambdab_t:\Xb\to\P^1_{\FF_L}$ is the map defined by Proposition
\ref{treeprop1}.(1). By the proof of the proposition we have
\[
    \phi_t(\delta)=\overline{\lambda_t(\delta)},
\]
where $\overline{\;\cdot\;}$ stands for the reduction map
$\P^1_L\to\P^1_{\FF_L}$. Together with formula \eqref{lambdateq}, this
shows that the collection of maps $(\phi_t)$ (which constitute a
finite amount of data) can be computed explicitly. By \cite{GHvdP88},
Proposition 1, the stably marked curve $(\Xb,\Db)$ can be
reconstructed effectively from the data $(\phi_t)_{t\in T}$. In
particular the following facts are shown in {\em loc.cit.}.
\begin{enumerate}
\item
  We have $t\sim t'$ if and only if $\phi_t=\phi_{t'}$. The maps
  $\phi_t$ determine the set $V(\Delta)$, via the bijection of Proposition
  \ref{treeprop1}.(3). 
\item
  For every $\delta\in D_L$ there exists a $t\in T$, unique up to
  $\sim$, such that $|\phi_t^{-1}(\phi_t(\delta))|=1$.  Moreover,
  $\bar{\delta}\in\Xb_v$, where $v\in V(\Delta)$ corresponds to $t$
  via the correspondence in (1). It follows that we can recover the map
  $\psi:D_L\to V(\Delta)$ from the maps $\phi_t$.
\item
  Fix $t\in T$ and let $v\in V(\Delta)$ correspond to $t$ via (1). Then the
  isomorphism $\Xb_v\liso\P^1_{\FF_L}$ induced by $\lambdab_t$ sends
  $\bar{\delta}$ to $\phi_t(\delta)$, for all $\delta\in D_L$. In this way, we
  can recover the divisor $\Db\subset\Xb$.
\end{enumerate}
\end{remark}

\begin{notation} \label{treenot} For every vertex $v\in V(\Delta)$ we choose
  $t\in T$ corresponding to $v$ via the bijection of Proposition
  \ref{treeprop1}.(3). Let $x_v:=\lambda_t^*(x)\in L(x)$ be the
  pullback of the standard coordinate $x$ of $X_L=\P^1_L$ via the
  chart $\lambda_t$. Equation \eqref{lambdateq} expresses $x_v$ in
  terms of the original coordinate $x$ and the triple $t=(\alpha,
  \beta, \gamma)$.

  Since $\cX$ is an integral, normal scheme and $\Xb_v\subset\cX$ is
  an irreducible closed subset of codimension one, the local ring of
  $\cX$ at the generic point of $\Xb_v$ is a discrete valuation
  ring. We denote the corresponding discrete valuation on $L(x)$ by
  $\eta_v$, where we normalize $\eta_v$ such that $\eta_v|_L$ is the
  standard valuation on $L$. Then $\eta_v$ is simply the Gauss
  valuation of $L(x_v)$ with respect to the parameter $x_v$. The
  residue field  of $\eta_v$ is naturally identified with
  the function field of $\Xb_v$. We have that
  \[ 
      F(\Xb_v)=\FF_L(\xb_v),
  \]
where $\xb_v$ denotes the image of $x_v$ in the function field $F(\Xb_v)$.
  In fact, $\xb_v$ is the pullback of the standard parameter of
  $\P^1_{\FF_L}$ via the isomorphism $\Xb_v\liso\P^1_{\FF_L}$ induced by
  $\lambdab_t$.  
\end{notation}

\subsection{} \label{kummer5}

As in \S~\ref{kummer2} we denote by $(\cX,\cD)$ the stable model of the
marked curve $(X_L,D_L)$, where $L$ is a finite extension of the splitting
field $L_0$ of $f$.  Let $\cY$ be the normalization of $\cX$ in the
function field of $Y_L$.  Corollary \ref{admisscor1} states that $\cY$
is a semistable model of $Y$ if $L$ is a sufficiently large tame
extension of $L_0$. The following proposition quantifies the degree of
$L/L_0$ and describes the special fiber $\Yb$ of $\cY$. 
 
Choose a prime element $\pi$ of $\OO_L$.  Consider $v\in
V(\Delta)$ and let  $x_v$ be the corresponding coordinate as in Notation
\ref{treenot}. Define
\[
   N_v:=\eta_v(f)/\eta_v(\pi), \qquad f_v:=\pi^{-N_v}f.
\]
Then $\eta_v(f_v)=0$ and we may consider the image $\bar{f}_v$ of
$f_v$ in the residue field $\FF_L(\xb_v)$ of the valuation
$\eta_v$. Let $n_v$ denote the order of the image of $\bar{f}_v$ in
the group $\FF_L(\xb_v)^\times/(\FF_L(\xb_v)^\times)^n$.

\begin{proposition} \label{kummerprop2}
\begin{enumerate}
\item
  Assume that the field $L$ contains all $n$th roots of unity. 
  Then the model $\cY$ of $Y_L$ is semistable if and only if $n\mid N_v$
  for all $v\in V(\Delta)$. 
\item Assume that the condition in (1) holds, and fix $v\in
  V(\Delta)$.  Then there is a bijection between the set of
  irreducible components of $\Yb_v:=\Yb|_{\Xb_v}$ and the set of
  elements $\bar{g}\in \FF_L(\xb_v)^\times$ satisfying
  \[
        \bar{g}^{n/n_v} = \bar{f}_v.
  \]
 The restriction of $\bar{\phi}$ to the irreducible component
    corresponding to $\bar{g}$ is the Kummer cover
   with equation
  \[
        \bar{y}_v^{n_v}=\bar{g},
  \]  
where $y_v=\pi^{-N_v/n}y$.  
\end{enumerate}
\end{proposition}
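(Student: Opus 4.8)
The plan is to work locally at the generic point of each component $\Xb_v$ and analyze the normalization $\cY\to\cX$ there explicitly as a Kummer extension. First I would fix $v\in V(\Delta)$, pass to the coordinate $x_v$ of Notation~\ref{treenot}, and consider the complete discrete valuation ring $R=\hat{\OO}_{\cX,\Xb_v}$, whose fraction field is the completion of $L(x_v)$ at the Gauss valuation $\eta_v$ and whose residue field is $F(\Xb_v)=\FF_L(\xb_v)$. Since $F(Y_L)=L(x)[y]/(y^n-f)$ and $y_v:=\pi^{-N_v/n}y$ satisfies $y_v^n = \pi^{-N_v}f = f_v$ (using $n\mid N_v$ from part~(1)), the extension of fraction fields is generated by an $n$th root of the unit $f_v\in R^\times$ (a unit because $\eta_v(f_v)=0$). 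The normalization $\cY$ localized at $\Xb_v$ is then the integral closure of $R$ in $R[y_v]/(y_v^n-f_v)$, and because $n$ is prime to $p$ and $L$ contains the $n$th roots of unity, this is a tame Kummer extension of $R$.

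\smallskip

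Next I would identify the connected components of this Kummer extension with the factorization of the polynomial $T^n-\fb_v$ over the residue field $\FF_L(\xb_v)$. Write $d:=n/n_v$; by definition of $n_v$, the image $\fb_v$ lies in $(\FF_L(\xb_v)^\times)^{d}$ but in no larger power-subgroup of index dividing $n$, so $T^n-\fb_v$ factors over $\FF_L(\xb_v)$ into $d$ irreducible factors, each of degree $n_v$, indexed by the solutions $\gb$ of $\gb^{d}=\fb_v$ (these solutions form a torsor under $\mu_d\subset\FF_L(\xb_v)$, and there are exactly $d=n/n_v$ of them since $L\supset\mu_n$). By the structure of tame extensions of complete DVRs — concretely, Hensel's lemma applied to the factorization of $T^n-f_v$ over $R$, lifting the residue factorization — the extension $R'\!:=\mathrm{normalization}$ breaks up as a product of $d$ complete DVRs $R'_{\gb}$, one for each $\gb$, and each $R'_{\gb}$ has residue field the degree-$n_v$ extension of $\FF_L(\xb_v)$ obtained by adjoining an $n_v$th root of $\gb$. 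This gives the claimed bijection between irreducible components of $\Yb_v$ and the set of $\gb$ with $\gb^{n/n_v}=\fb_v$.

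\smallskip

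Finally I would read off the equation of each component: the residue field of $R'_{\gb}$ is $\FF_L(\xb_v)[\yb_v]/(\yb_v^{n_v}-\gb)$, where $\yb_v$ is the image of $y_v$; since $F(\Xb_v)=\FF_L(\xb_v)$ and $\Xb_v\cong\P^1_{\FF_L}$, the component of $\Yb_v$ corresponding to $\gb$ is the smooth projective model of this function field, i.e.\ the curve $\yb_v^{n_v}=\gb$, and the restriction of $\phib$ to it is exactly this Kummer cover of degree $n_v$. This establishes part~(2) of the proposition. The main obstacle, I expect, is the bookkeeping in the middle step: one must be careful that $n_v$ is defined via the image of $\fb_v$ in $\FF_L(\xb_v)^\times/(\FF_L(\xb_v)^\times)^n$ rather than modulo an arbitrary power, so that the number of factors is correctly $n/n_v$ and each factor is genuinely irreducible of degree $n_v$; here the hypotheses that $p\nmid n$ and $\mu_n\subset L$ are essential (they guarantee $\mu_n\subset\FF_L$, so that $T^{n_v}-\gb$ is irreducible precisely when $\gb$ is not a $\ell$th power for any prime $\ell\mid n_v$, matching the definition of $n_v$). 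One also needs to invoke Theorem~\ref{admissthm1} (via the multiplicity-one condition, which part~(1) ensures) to know that the resulting $\Yb_v$ is reduced, so that these residue-field computations genuinely compute the components of the special fiber.
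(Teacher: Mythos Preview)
Your approach is essentially the same as the paper's: both arguments reduce to factoring the polynomial $T^n-f_v$ over the residue field of the Gauss valuation $\eta_v$ as $\prod_{\gb}(T^{n_v}-\gb)$, and then reading off the extensions of $\eta_v$ (hence the components of $\Yb_v$) from these factors. The only cosmetic difference is that you pass to the completion $\hat{\OO}_{\cX,\Xb_v}$ and invoke Hensel's lemma explicitly, whereas the paper stays with the discrete valuation $\eta_v$ and simply observes that the reduction $\bar{F}_v$ is separable, so $\eta_v$ is unramified and its extensions correspond to the irreducible factors; the paper also records explicitly that this gives the backward implication of part~(1), which in your write-up is implicit in the unramifiedness (multiplicity one) you establish.
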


\begin{proof} By Theorem \ref{admissthm1} and the proof of Corollary
\ref{admisscor1}, $\cY$ is semistable if and only if the valuation
$\eta_v$ is unramified in the extension of function fields
$F(Y_L)/F(X_L)$ for all $v\in V(\Delta)$. If this is the case, the
irreducible components of $\Yb_v$ are in bijection with the discrete
valuations on $F(Y_L)$ extending $\eta_v$. The irreducible component
corresponding to an extension $\xi_v$ of $\eta_v$ to $F(Y_L)$ is the
smooth projective curve whose function field is the residue field of
$\xi_v$.  This
reduces the proof of the proposition to standard facts on the behavior
of valuations in Kummer extensions. For convenience we give the main
argument.

Assume that $n\mid N_v$ for some $v$. Then the element
$y_v:=\pi^{-N_v/n}y\in F(Y_L)$ generates the extension $F(Y_L)/F(X_L)$
and is a root of the irreducible polynomial
$F_v:=T^n-f_v\in L(x_v)[T]$.  The
polynomial $F_v$ is integral with respect to $\eta_v$. Its
reduction is separable and is the product of $n/n_v$ irreducible
factors of degree $n_v$, as follows:
\[
     \bar{F}_v = \prod_{\bar{g}^{n/n_v}=\bar{f}_v} (T^{n_v}-\bar{g}).
\]
(Here the hypothesis $\zeta_n\in L$ is used.)  It follows that $\eta_v$ is
unramified in the extension $F(Y_L)/F(X_L)$. Furthermore, the extensions of
$\eta_v$ are in bijection with the irreducible factors of $\bar{F}_v$. For
each extension the residue field extension is generated by the image of $y_v$,
which is a root of the corresponding irreducible factor of $\bar{F}_v$. This
proves (2) and the backward implication in (1). The forward implication in (1)
is left to the reader.
\end{proof}

\begin{corollary}\label{kummercor}
  Assume that $L$ contains the $n$th roots of unity and that the
  ramification index of $L/L_0$ is divisible by $n$. Then $Y_L$ has
  semistable reduction. The irreducible components of the reduction
  $\Yb$ are absolutely irreducible.
\end{corollary}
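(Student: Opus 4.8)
The plan is to read everything off Proposition~\ref{kummerprop2}, together with a little Kummer theory. Set $e:=e(L/L_0)$, so that $n\mid e$ by hypothesis, and fix a prime element $\pi$ of $\OO_L$.

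\emph{Semistable reduction.} Since $L$ contains the $n$-th roots of unity, Proposition~\ref{kummerprop2}.(1) applies and tells us that $\cY$ is semistable if and only if $n\mid N_v$ for every $v\in V(\Delta)$. I would in fact prove the stronger statement $e\mid N_v$. Indeed $N_v=\eta_v(f)$ because $\eta_v(\pi)=1$, and by Notation~\ref{treenot} the valuation $\eta_v$ is the Gauss valuation of $L(x_v)$ with respect to $x_v$, where $x_v=\lambda_t^*(x)$ for a triple $t\in T$ of points of $D_L\subseteq S\cup\{\infty\}$. As $S\subseteq L_0$, the explicit formula~\eqref{lambdateq} shows $\lambda_t\in\PGL_2(L_0)$, so $L_0(x_v)=L_0(x)$ and $f$ is a rational function of $x_v$ with coefficients in $L_0$. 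Hence the restriction of $\eta_v$ to $L_0(x_v)$ equals $e$ times the $L_0$-normalized Gauss valuation $\eta_v^0$ with respect to $x_v$; since $\eta_v^0(f)\in\ZZ$ we get $N_v=e\cdot\eta_v^0(f)\in e\ZZ\subseteq n\ZZ$. Therefore $\cY$ is semistable, i.e.\ $Y_L$ has semistable reduction.

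\emph{Absolute irreducibility.} Fix $v\in V(\Delta)$. By Proposition~\ref{kummerprop2}.(2) every irreducible component of $\Yb_v$ has the form $C_{\bar g}:\bar y_v^{\,n_v}=\bar g$ over $\FF_L$, for some $\bar g\in\FF_L(\xb_v)^\times$ with $\bar g^{\,n/n_v}=\bar f_v$, and $\Xb_v\cong\P^1_{\FF_L}$ by Proposition~\ref{treeprop1}. Since $p\nmid n$ and $\mu_n\subseteq L$, the reduction map embeds $\mu_n$ into $\FF_L^\times$, so $\mu_{n_v}\subseteq\FF_L$, and consequently $C_{\bar g}$ is geometrically irreducible exactly when $\bar g$ has order $n_v$ in $\overline{\FF_L}(\xb_v)^\times/(\overline{\FF_L}(\xb_v)^\times)^{n_v}$. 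Over $\FF_L$ itself $\bar g$ does have order $n_v$: if $m\mid n_v$ and $\bar g^{\,m}$ is an $n_v$-th power in $\FF_L(\xb_v)^\times$, then $\bar f_v^{\,m}=(\bar g^{\,m})^{n/n_v}$ is an $n$-th power, whence $n_v\mid m$ because $n_v$ is by definition the order of $\bar f_v$ modulo $n$-th powers. So $C_{\bar g}$ is irreducible over $\FF_L$, and it remains only to check that the order of $\bar g$ is unchanged after base change to $\overline{\FF_L}$.

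\emph{The main obstacle.} The one step that is not purely formal is this last claim — upgrading irreducibility over $\FF_L$ to geometric irreducibility of the $C_{\bar g}$, equivalently, showing each $C_{\bar g}$ has constant field $\FF_L$. Writing $\bar g=c_0\cdot\prod_j P_j^{m_j}$ with $P_j$ monic irreducible over $\FF_L$ and $c_0\in\FF_L^\times$, and using that $\Xb_v$ is a projective line on which Galois-conjugate closed points carry equal multiplicities, one sees that the divisor part $\prod_j P_j^{m_j}$ contributes the same order modulo $n_v$-th powers over $\FF_L$ as over $\overline{\FF_L}$; the order can drop only through the constant $c_0$. Hence the whole point is to show that $c_0$ lies in $(\FF_L^\times)^d$ for every divisor $d$ of $n_v$ that could produce such a drop. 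Since $c_0$ is, up to an $n$-th power, the reduction of the unit part of the (a priori arbitrary) leading coefficient $c\in K^\times$ of $f$, this is where the hypothesis $\mu_n\subseteq\FF_L$ — together with the relation $n\mid N_v$ that governs how $c_0$ is assembled from $c$ — must be brought to bear in an essential way. I expect this control of the constant term to be the only real work; everything else is bookkeeping with Gauss valuations and elementary Kummer extensions.
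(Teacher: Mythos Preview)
Your argument for semistable reduction is correct and is the natural one (the paper states the corollary without proof, but this is what is meant): since the branch locus lies in $L_0\cup\{\infty\}$, each chart $\lambda_t$ has entries in $L_0$ by~\eqref{lambdateq}, so $f\in L_0(x_v)$ and the Gauss valuation $\eta_v$ restricts to $e(L/L_0)$ times the $L_0$-normalized Gauss valuation. Hence $e\mid N_v$ and Proposition~\ref{kummerprop2}.(1) gives semistability; the same conclusion also drops out of the proof of Corollary~\ref{admisscor1}, since all multiplicities $m_W$ divide $n$.

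Your concern about absolute irreducibility, however, is not an unfinished detail --- it is a genuine obstruction, and the second assertion of the corollary is false as stated. The reduction you make is exactly right: the divisor part of $\bar f_v$ has the same order modulo $n$-th powers over $\FF_L$ and over its algebraic closure, so the only way $n_v$ can exceed the geometric order $n_v'$ is through the constant $c_0$. But nothing in the hypotheses controls $c_0$. Concretely, take $K=\QQ_5$, $n=4$, and
\[
   f(x)=x^2(x-5)^2(x-1)(x-2)(x-4).
\]
Then $L_0=K$, Assumption~\ref{fnass} holds with $g(Y)=5$, one has $\mu_4\subset\QQ_5$, and $L:=\QQ_5(5^{1/4})$ satisfies $e(L/L_0)=4$, so both hypotheses of the corollary are met and $\FF_L=\FF_5$. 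On the leaf component $\Xb_1$ with coordinate $x_1=x/5$ one computes $N_1=16$ and
\[
   \bar f_1=(-1)(-2)(-4)\cdot\xb_1^2(\xb_1-1)^2=2\,\xb_1^2(\xb_1-1)^2\in\FF_5(\xb_1).
\]
Since $2$ is a nonsquare in $\FF_5$, $\bar f_1$ is not a square in $\FF_5(\xb_1)$, so $n_1=4$ and $\Yb_1:\yb_1^4=\bar f_1$ is irreducible over $\FF_L=\FF_5$. But $\yb_1^2/(\xb_1(\xb_1-1))$ is a square root of $2$, so the constant field of $\Yb_1$ is $\FF_{25}$ and $\Yb_1$ splits into two (genus-zero) components over $\overline{\FF_5}$. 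Thus the ``control of the constant term'' you hoped for does not exist in this generality, and the gap you flagged cannot be closed. Fortunately the clause does not seem to be used in \S\ref{mono}, which works directly with the possibly reducible curves~$\Yb_v$.
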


\section{Computing the inertial reduction} \label{mono}

We continue using the notation of the previous section. In particular,
$\phi:Y\to X=\P^1_K$ is a Kummer cover given by the equation
\[
        y^n=f(x)
\]
satisfying Assumption \ref{fnass}, $L_0/K$ is the splitting field of
$f$ and $L/L_0$ is a sufficiently large finite extension. The precise
meaning of `sufficiently large' is given by the condition of
Proposition \ref{kummerprop2}.(1). In this section we assume that the
possibly stronger condition from Corollary \ref{kummercor} holds.  

Let $(\cX,\cD)$ be the stable model of the marked curve $(X_L,D_L)$
and $\cY$ the normalization of $\cX$ in the function field of
$Y_L$. By Proposition \ref{kummerprop2} and Remark
\ref{admissrem2}.(1), $\cY$ is a quasi-stable model of $Y_L$. After
enlarging $L$ we may also assume that $L/K$ is a Galois extension. The
following assumption summarizes the requirements on $L$.

\begin{assumption}\label{Lass}
 We consider a finite
extension $L/K$ satisfying 
\begin{itemize}
\item  $L$ contains the splitting field $L_0$ of $f$ over $K$,
\item  $L$ contains a primitive $n$th root of $1$ and an
  $n$th root of $p$,
\item the extension $L/K$ is Galois.
\end{itemize}
\end{assumption}

As before we let $\Gamma=\Gal(L/K)$ denote the Galois group of $L/K$
and $I\lhd\Gamma$ the inertia subgroup. The group $\Gamma$ has a
natural semilinear action on the special fiber
$\Yb:=\cY\otimes_{\OO_L}\FF_L$. Recall that the {\em inertial
  reduction} of $Y$ (with respect to the quasi-stable model $\cY$) is
defined as the quotient $\Zb:=\Yb/\Gamma$. In this section we give a
concrete recipe how to compute $\Zb$. Our assumption is that the
extension $L/K$ together with the Galois group $\Gamma=\Gal(L/K)$
and its action on a chosen prime element $\pi$ of $L$ are
known explicitly.

Our strategy to compute $\Zb$ may be summarized as follows. It is
clear that the cover $\phi:Y\to X$ extends to a finite
$\Gamma$-equivariant morphism $\cY\to\cX$. Its restriction to the
special fiber is a finite $\Gamma$-equivariant map $\phib:\Yb\to\Xb$
between semistable curves over $\FF_L$. It induces a finite map
$\Zb\to\Wb:=\Xb/\Gamma$ between semistable curves over $\FF_K$. We
also write $\Zb_{\FF_L}:=\Yb/I$ and $\Wb_{\FF_L}:=\Xb/I$ for the
quotients by the action of the inertia group. Diagram \eqref{xywzdiag}
shows the relevant maps.
\begin{figure}
\begin{equation} \label{xywzdiag}
\begin{gathered}
\xymatrix@R-=3mm{
  \Yb \ar[dr] \ar[ddd] \\
      & \Zb_{\FF_L}=\Yb/I \ar[ddd]\ar[dr] \\
              &&\Zb=\Yb/\Gamma\ar[ddd]\\
  \Xb \ar[dr] \ar[ddd] \\
      & \Wb_{\FF_L}=\Xb/I \ar[dr]\ar[ddd] \\
      && \Wb=\Xb/\Gamma \ar[ddd] \\
  \Spec\FF_L \ar[dr] \\
      &\Spec\FF_L \ar[dr] \\
      && \Spec \FF_K }
\end{gathered}
\end{equation}
\end{figure}
Our strategy is to first describe the action of $\Gamma$ on
$\Xb$ (\S\S~\ref{mono1} and
\ref{mono2}), and then the maps $\Zb_{\FF_L}\to\Wb_{\FF_L}$
(\S~\ref{mono3}) and $\Zb\to\Wb$ (\S~\ref{mono4}).

\subsection{} \label{mono1}

Recall that $(\Xb,\Db)$ is the special fiber of the stable model
$(\cX,\cD)$ of the marked curve $(X_L,D_L)$. In particular, $\Xb$ is a
semistable curve of genus zero. Let $\Delta$ denote the tree of
components associated with $\Xb$. In \S~\ref{kummer2} we gave a
description of the tree $\Delta$ in terms of the divisor $D_L\subset
X_L$. It is clear from this description that the action of $\Gamma$ on
$V(\Delta)$ is determined, in an explicit way, by the action of
$\Gamma$ on $D_L$. (We refer to \S~\ref{exa14} for an explicit
example.)  We may therefore consider the action of $\Gamma$ on
$\Delta$ as known.

For a vertex $v\in V(\Delta)$ we let $\Gamma_v\subset\Gamma$ be the
stabilizer of the component $\Xb_v$ of $\Xb$ corresponding to $v$. The
subgroup $\Gamma_v$ consists exactly of those elements of $\Gamma$
leaving invariant  the set $\psi^{-1}(v)$ consisting of the branch points
$\alpha\in D_L$ specializing to $\Xb$.

The curve $\Wb=\Xb/\Gamma$ is a semistable curve over $\FF_K$ with
component graph $\Delta/\Gamma$.  Then $\Wb_v:=\Xb_v/\Gamma_v$ is the
irreducible component of $\Wb$ corresponding to the $\Gamma$-orbit of
$v$.  In order to compute $\Wb=\Xb/\Gamma$, it therefore suffices to
compute $\Wb_v=\Xb_v/\Gamma_v$, for each $v$.

\subsection{}\label{mono2}

Let us fix a vertex $v\in V(\Delta)$. The goal of Lemma \ref{monolem2} below
is to describe the action of $\Gamma_v$ on the curve
$\Xb_v$. We retain Notation \ref{treenot} and write
\begin{equation} \label{monoeq2}
        x_v=A(x)=\frac{ax+b}{cx+d},\qquad\text{with
       $A:=\begin{pmatrix} a & b \\ c & d \end{pmatrix}\in \PGL_2(L)$}.
\end{equation}

\begin{lemma} \label{monolem2} For $\sigma\in\Gamma_v$ the matrix
  $B_\sigma:=\sigma(A)A^{-1}$ lies in $\PGL_2(\OO_L)$. Furthermore, if
  $\psi_\sigma\in\Aut(\FF_L(\xb_v))$ denotes the automorphism induced by the
  action of $\sigma$ on $\Xb_v$, then
  \[
     \psi_\sigma(\xb_v)=\bar{B}_\sigma(\xb_v).
  \]
  Here $\bar{B}_\sigma\in\PGL_2(\FF_L)$ denotes the reduction of $B_\sigma$.
\end{lemma}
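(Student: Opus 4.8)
The plan is to track how the coordinate $x_v$ and the valuation $\eta_v$ transform under an element $\sigma\in\Gamma_v$, and to use the characterization of $\eta_v$ as the Gauss valuation attached to the parameter $x_v$. First I would observe that since $\sigma$ stabilizes the component $\Xb_v$, it stabilizes the generic point of $\Xb_v$ in $\cX$, hence fixes the discrete valuation $\eta_v$ on $L(x)$ up to the semilinear twist coming from $\sigma|_L$; more precisely $\eta_v\circ\sigma=\eta_v$ as valuations on $L(x)$ normalized so that $\eta_v|_L$ is the standard valuation (this uses that $\sigma|_L$ preserves the standard valuation on $L$ because $L/K$ is a finite, hence discretely valued, extension and $\sigma\in\Gamma_v\subset\Gamma$). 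Now $\eta_v$ is the Gauss valuation with respect to $x_v=A(x)$, and applying $\sigma$ we see that $\eta_v$ is also the Gauss valuation with respect to $\sigma(x_v)=\sigma(A)(x)$. Two elements of $\PGL_2(L)$ define the same Gauss valuation on $L(x)$ if and only if they differ by an element of $\PGL_2(\OO_L)$ on the left; applying this to $\sigma(A)$ and $A$ yields that $B_\sigma:=\sigma(A)A^{-1}\in\PGL_2(\OO_L)$, which is the first assertion.

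For the second assertion I would pass to residue fields. The valuation $\eta_v$ has residue field $F(\Xb_v)=\FF_L(\xb_v)$, where $\xb_v$ is the reduction of $x_v$. The automorphism $\sigma$ of $\cX$ restricts to the automorphism $\psi_\sigma$ of $F(\Xb_v)$, which is the reduction of the map $x_v\mapsto\sigma(x_v)$ on the level of $\eta_v$-integral elements. Since $\sigma(x_v)=\sigma(A)(x)=B_\sigma\bigl(A(x)\bigr)=B_\sigma(x_v)$ and $B_\sigma\in\PGL_2(\OO_L)$ has a well-defined reduction $\bar B_\sigma\in\PGL_2(\FF_L)$, reducing the identity $\sigma(x_v)=B_\sigma(x_v)$ modulo $\eta_v$ gives exactly $\psi_\sigma(\xb_v)=\bar B_\sigma(\xb_v)$. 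One small point to handle carefully is that $\bar B_\sigma$ is genuinely an element of $\PGL_2(\FF_L)$, i.e.\ its reduction is not the zero matrix: this follows because $B_\sigma^{-1}=\sigma(A)^{-1}\sigma(A)A^{-1}$-type manipulation shows $B_\sigma^{-1}=A\,\sigma(A)^{-1}$ is also in $\PGL_2(\OO_L)$ by the same Gauss-valuation argument with the roles of $A$ and $\sigma(A)$ swapped, so $B_\sigma$ is a unit in $\mathrm{GL}_2(\OO_L)$ up to scalars and its reduction is invertible.

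The main obstacle I expect is making precise and correct the claim that ``$\eta_v$ is fixed by $\sigma$'' together with the clean statement ``two matrices defining the same Gauss valuation differ by $\PGL_2(\OO_L)$ on the left.'' The first requires being careful about the semilinearity of the $\Gamma$-action: $\sigma$ does not fix $L(x)$ pointwise, so one must phrase the invariance of $\eta_v$ in terms of $\sigma$ mapping the local ring $\OO_{\cX,\Xb_v}$ to itself (which is automatic from $\sigma$ stabilizing $\Xb_v$ as a subscheme of $\cX$) and then deducing the relation between $\eta_v$ and $\eta_v\circ\sigma^{-1}$. The second is a standard but not entirely trivial fact about Berkovich-style discs / Gauss valuations on $\P^1_L$; I would either cite it or give the short direct argument: $A$ and $A'$ give the same Gauss valuation iff $A'\circ A^{-1}$ maps the standard disc to itself iff $A'A^{-1}\in\PGL_2(\OO_L)$. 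Everything else is formal bookkeeping with the reduction map.
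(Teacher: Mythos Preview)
Your proposal is correct and follows essentially the same approach as the paper: both arguments hinge on the observation that $\sigma\in\Gamma_v$ preserves the Gauss valuation $\eta_v$, so that the M\"obius transformation $B_\sigma$ relating $x_v$ and $\sigma(x_v)=\sigma(A)(x)=B_\sigma(x_v)$ must lie in $\PGL_2(\OO_L)$, after which reduction modulo $\eta_v$ gives the formula for $\psi_\sigma(\xb_v)$. The paper's proof is terser and presents the computation $\sigma(x_v)=B_\sigma(x_v)$ first, whereas you spell out more carefully the semilinearity and the invertibility of $\bar B_\sigma$, but the substance is identical.
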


\begin{proof} An element $\sigma\in\Gamma=\Gal(L/K)$ acts canonically on
  $L(x)$, the function field of $X_L=\P^1_L$, by fixing the generator
  $x$. Therefore,
\[
     \sigma(x_v)=\sigma(A(x))=\sigma(A)(x)=\sigma(A)(A^{-1}(x_v))=
       B_\sigma(x_v).
\]
If $\sigma\in\Gamma_v$ then $\sigma$ fixes the Gauss valuation corresponding
to $x_v$ and hence $B_\sigma\in\PGL_2(\OO_L)$. The equality
$\psi_\sigma(\xb_v)=\bar{B}_\sigma(\xb_v)$ is a direct consequence. 
\end{proof}

\begin{remark}\label{monorem}
  Clearly, the map $\Gamma_v\to\Aut(\FF_L(\xb_v))$,
  $\sigma\mapsto\psi_\sigma$, is a group homomorphism. However, the map
  $\Gamma_v\to\PGL_2(\FF_L)$, $\sigma\mapsto\bar{B}_\sigma$, is {\em not}
  a group homomorphism. A straightforward computation shows that it obeys the
  rule
  \[
       \bar{B}_{\sigma\tau}=\sigma(\bar{B}_\tau)\cdot\bar{B}_\sigma.
  \]
The reason is that the restriction of $\psi_\sigma$ to $\FF_L$ need
not be trivial if $\sigma\notin I_v$.
It follows that the map $\sigma\mapsto\bar{B}_\sigma$ defines an
element of the set of nonabelian cocycles
  \[
      Z^1(\Gamma,\PGL_2(\FF_L)^{\rm opp}),
  \]
as defined in \cite{SerreCG}, I, \S~ 5.1. Of course, the restriction of this
cocycle to the inertia group $I\subset\Gamma$ is a group homomorphism.
\end{remark}  

\begin{lemma} \label{monolem3}
  For a suitable choice of the chart $\lambda_v$ we have
  \[
       \psi_\sigma(\xb_v)=a_\sigma\xb_v+b_\sigma, 
  \]
  with $a_\sigma,b_\sigma\in\FF_L$, for all $\sigma\in\Gamma_v$. In
  other words, $\psi_\sigma$ is an affine linear transformation for
  all $\sigma\in\Gamma_v$. 
\end{lemma}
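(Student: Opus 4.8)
The plan is to exhibit an explicit chart in which every $\psi_\sigma$ becomes affine, by ruling out the only obstruction: a finite fixed point of the $\Gamma_v$-action on $\Xb_v$ at which the branch locus specializes. Recall from Notation \ref{treenot} and Proposition \ref{treeprop1} that $\Xb_v\cong\P^1_{\FF_L}$ and that the specializations $\bar\delta$ for $\delta\in\psi^{-1}(v)$, together with the singular points of $\Xb$ lying on $\Xb_v$, are the ``special points'' of $\Xb_v$; there are at least three of them since $(\Xb,\Db)$ is stably marked. First I would observe that an element of $\PGL_2(\FF_L)$ is affine linear (fixes $\infty$) precisely when it fixes the point $\infty\in\P^1_{\FF_L}$, so it suffices to choose the coordinate $\xb_v$ so that the point at infinity is a $\Gamma_v$-fixed point of $\Xb_v$.

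Next I would produce such a fixed point. The group $\Gamma_v$ acts on $\Xb_v\cong\P^1_{\FF_L}$ via the homomorphism $\sigma\mapsto\psi_\sigma$ of Remark \ref{monorem}, preserving the finite set $\Sigma_v$ of special points of $\Xb_v$ (the specializations $\bar\delta$, $\delta\in\psi^{-1}(v)$, and the points $\Xb_v\cap\Xb_{v'}$ for $v'$ adjacent to $v$), since $\Gamma_v$ permutes the adjacent vertices and the branch points specializing to $\Xb_v$. Among the adjacent vertices there is a distinguished one: because $\Delta$ is a tree, removing $v$ disconnects $\Delta$, and exactly one component of $\Delta\setminus\{v\}$ contains the vertex $\psi(\infty)$ (the unique component of $\Xb$ to which $\infty\in D_L$ — or, if $\infty\notin D_L$, its specialization still lies on a unique component, but more simply: pick any $\Gamma_v$-invariant choice). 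I would instead argue as follows: the point $\infty\in X_L=\P^1_L$ is $\Gamma$-fixed, so its specialization $\bar\infty\in\Xb$ is $\Gamma$-invariant; the unique component $\Xb_{v_\infty}$ containing $\bar\infty$ is therefore $\Gamma$-invariant. If $v=v_\infty$, then $\bar\infty\in\Xb_v$ is the desired $\Gamma_v$-fixed point. If $v\neq v_\infty$, then the edge of $\Delta$ at $v$ pointing toward $v_\infty$ is $\Gamma_v$-invariant (it is the first edge on the unique path from $v$ to $v_\infty$, and $\Gamma_v$ fixes both $v$ and, globally, $v_\infty$), so the corresponding singular point $\Xb_v\cap\Xb_{v'}$ is a $\Gamma_v$-fixed point of $\Xb_v$. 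In either case we obtain a point $P\in\Xb_v$ fixed by all of $\Gamma_v$.

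Then I would change the chart: since $P\in\Xb_v(\FF_L)$ lifts (via Hensel, using that $\Xb_v$ is a smooth component of the special fiber of the $\OO_L$-scheme $\cX$ along its generic point, i.e. we may move the cross-ratio triple) — concretely, choose a triple $t=(\alpha,\beta,\gamma)\in T$ representing $v$ as in Proposition \ref{treeprop1}.(3) with $\gamma$ chosen so that $\overline{\lambda_t(\gamma)}=\infty=P$; this is possible because $P$ is either $\bar\infty$ (take $\gamma=\infty$) or of the form $\bar\delta$ for a branch point $\delta$ specializing to the adjacent component, and by Remark \ref{treerem1}.(2) we can select a representative triple sending that branch point to $\infty$. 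With this chart, $P$ corresponds to $\xb_v=\infty$, and since $\psi_\sigma$ fixes $P$ for all $\sigma\in\Gamma_v$, the Möbius transformation $\bar B_\sigma$ fixes $\infty$, hence $\psi_\sigma(\xb_v)=a_\sigma\xb_v+b_\sigma$ with $a_\sigma,b_\sigma\in\FF_L$, as claimed.

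The main obstacle is the bookkeeping in Step 2: verifying that a genuinely $\Gamma_v$-fixed point of $\Xb_v$ exists, rather than merely a $\Gamma_v$-invariant finite subset. The tree structure of $\Delta$ together with the $\Gamma$-invariance of $\bar\infty$ is exactly what forces a canonical fixed \emph{point} (the ``direction toward $\infty$''); without using the distinguished point $\bar\infty$ one could only conclude that $\Gamma_v$ acts on $\Sigma_v$ with a fixed point if $|\Sigma_v/\Gamma_v|$ forces one, which need not happen. I would also need to check the compatibility claim that a triple $t$ representing $v$ with $\overline{\lambda_t(\gamma)}=P$ actually exists in $T$ — this follows from Remark \ref{treerem1}, which says the maps $\phi_t$ recover both $V(\Delta)$ and the specialization map $\psi$, so among the triples representing $v$ we are free to fix the image of any one special point at $\infty$.
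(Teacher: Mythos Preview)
Your argument is correct and follows essentially the same route as the paper: both use the $\Gamma$-invariance of the specialization $\bar\infty\in\Xb$ to produce a $\Gamma_v$-fixed $\FF_L$-rational point $P\in\Xb_v$ (namely $\bar\infty$ itself if it lies on $\Xb_v$, and otherwise the unique singular point of $\Xb_v$ in the direction of $\bar\infty$ along the tree $\Delta$), and then adjust the chart so that $P$ becomes $\xb_v=\infty$. The paper dispatches the chart adjustment in one line (``choose the chart $\lambda_v$ such that $p_1$ is the point $\xb_v=\infty$''); your effort to realize this via a specific triple $t\in T$ is unnecessary and mildly awkward when $\infty\notin D_L$, since any chart in the $\PGL_2(\OO_L)$-equivalence class of $\lambda_v$ will do and $\PGL_2(\OO_L)$ surjects onto $\PGL_2(\FF_L)$, which acts transitively on $\P^1(\FF_L)$.
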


\begin{proof} To prove the lemma we need to show the existence of an
$\FF_L$-rational point $p_1\in \Xb_v$ which is fixed by all $\sigma\in
\Gamma_v$.  Let $p_0:=\bar{\infty}\in\Xb$ denote the specialization of
the point $\infty\in X_L=\P^1_L$.  It is clear that $p_0$ is an
$\FF_L$-rational point fixed by $\Gamma$. If $p_0\in\Xb_v$ then
$p_1:=p_0$ satisfies the requirements.

 Otherwise, we let $p_1\in\Xb_v$ be the unique singular point of $\Xb$
 such that $p_0$ is contained in the connected component of
 $\Xb-\{p_1\}$ not containing $\Xb_v-\{p_1\}$.  In other words, $p_1$
 is the unique singular point of $\Xb$ contained in $\Xb_v$ which is
 ``nearest'' to $p_0$.  Since $\psi_\sigma\in \Aut(\FF_L(\xb_v))$, it
 follows that $p_1\in\Xb_v$ is an $\FF_L$-rational point which is
 fixed by the action of $\Gamma_v$. We now choose the chart
 $\lambda_v$ such that $p_1$ is the point $\xb_v=\infty$. This shows
 the statement of the lemma.  \end{proof}

\subsection{}\label{mono3}

We now describe how to compute the quotient $\Zb_{\FF_L}=\Yb/I$ of
$\Yb$ by the action of the inertia group, together with the map
$\Zb_{\FF_L}\to\Wb_{\FF_L}=\Xb/I$. By what was explained in
\S~\ref{mono1}, it suffices to consider the subcurve $\Yb_v:=\Yb|_{\Xb_v}$.

We choose a chart for $\Xb_v$ as in Lemma \ref{monolem3}. Recall that this
means that $\sigma\in I_v$ acts on the coordinate $\xb_v$ as
$\psi_\sigma(\xb_v)=a_\sigma \xb_v+b_\sigma$ with $a_\sigma , b_\sigma\in
\FF_L.$ Abusing notation, we also write $\psi_\sigma(\xb_v, \yb_v)$ for
the automorphism on $\Yb_v$ induced by $\sigma$.

Recall that $\Yb_v$ is given by the Kummer equation 
\begin{equation}\label{redeq}
\yb_v^n=\fb_v(\xb_v),
\end{equation}
where $y_v=\pi^{-N_v/n} y$ (Proposition \ref{kummerprop2}.(2)). The curve $\Yb_v$
is in general reducible. We prefer to work
with the reducible equation (\ref{redeq}) rather than the equation for the
irreducible components. This means that we work with the function algebra
$\FF_L(\xb_v)[\yb_v]/(\yb_v^n-\fb_v)$ instead of with the function field of
one of the irreducible components.

We have assumed that $L$ contains a primitive $n$th root of unity (Assumption
\ref{Lass}). It follows that the groups $G$ and
$I_v$ commute inside $\Aut_{\FF_L}(\Yb_v)$.  The quotient cover
\[
\Zb_{v,\FF_L}=\Yb_v/I_v\to \Wb_{v,\FF_L}=\Xb_v/I_v
\]
is therefore still Galois with Galois group $G/(I_v\cap G)$. 
In Propositions \ref{invprop1} and \ref{invprop2} below  we
compute a Kummer equation for this cover.

Our next goal is to compute an equation for the quotient curve of
$\Yb_v$ by the finite group $I_v$ explicitly. Being an inertia group
$I_v=P_v\rtimes C_v$ is an extension of a cyclic group $C_v$ of order
prime to $p$ by its Sylow $p$-subgroup $P_v$.  The following
proposition describes the action of $P_v$ on $\Yb_v$.

\begin{proposition}\label{invprop1} Write $\bar{P}_v=\{\psi_\sigma\mid \sigma\in
 P_v\}$ for 
the image of $P_v$ in $\Aut_{\FF_L}(\Yb_v)$.
\begin{enumerate}
\item 
For every $\sigma\in P_v$ we have that
\[
\psi_\sigma(\xb_v, \yb_v)=(\xb_v+b_\sigma, \yb_v)
\]
for some $b_\sigma\in \FF_L.$
\item The group $\bar{P}_v$ is an  elementary abelian $p$-group.
\end{enumerate}
\end{proposition}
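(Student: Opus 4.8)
The plan is to exploit the fact that $P_v$ is a $p$-group acting on $\Xb_v$ by affine-linear transformations $\psi_\sigma(\xb_v)=a_\sigma\xb_v+b_\sigma$ (Lemma \ref{monolem3}), and to use that $p\nmid n$. First I would show that each $\sigma\in P_v$ acts on $\xb_v$ by a \emph{translation}. Indeed, $\sigma\mapsto a_\sigma$ is a map from the $p$-group $P_v$ to $\FF_L^\times$; since the restriction of $\psi_\sigma$ to $\FF_L$ can be nontrivial this is only a cocycle, but in any case its image lands in the group of roots of unity of order a power of $p$ inside $\bar\FF_L^{\times}$, and there are none (as $p$ is the characteristic). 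More precisely: the order of $\psi_\sigma$ is a power of $p$, an affine map $\xb_v\mapsto a\xb_v+b$ of order $p^k$ with $a\neq 1$ would have $a$ a primitive $p^k$-th root of unity, impossible in characteristic $p$. Hence $a_\sigma=1$ for all $\sigma\in P_v$, giving $\psi_\sigma(\xb_v)=\xb_v+b_\sigma$.

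Next I would determine the action on $\yb_v$. From \eqref{redeq} we have $\yb_v^n=\fb_v(\xb_v)$, and $\psi_\sigma$ must send this relation to $\psi_\sigma(\yb_v)^n=\fb_v(\xb_v+b_\sigma)$. On the other hand $\sigma\in I_v$ stabilizes the valuation $\eta_v$ and the branch divisor structure, so $\fb_v(\xb_v+b_\sigma)$ and $\fb_v(\xb_v)$ differ by an $n$-th power of a function in $\FF_L(\xb_v)^\times$ (this is built into the bijection of Proposition \ref{kummerprop2}.(2), permuting the components indexed by $\bar g$ with $\bar g^{n/n_v}=\fb_v$); thus $\psi_\sigma(\yb_v)=\zeta\cdot\yb_v\cdot h_\sigma(\xb_v)$ for some $n$-th root of unity $\zeta$ (coming from the $G$-action, which commutes with $I_v$) and some rational function $h_\sigma$. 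Since $G$ and $I_v$ commute in $\Aut_{\FF_L}(\Yb_v)$, after composing with the $G$-action we may normalize so that the ``$\psi_\sigma$'' representing the class of $\sigma$ fixes $\yb_v$ exactly, i.e.\ $\psi_\sigma(\xb_v,\yb_v)=(\xb_v+b_\sigma,\yb_v)$; but one should be careful here — the cleanest route is to argue directly that an order-$p^k$ automorphism commuting with the order-$n$ Galois action and acting as a translation on the base must fix $\yb_v$, because multiplying $\yb_v$ by $h_\sigma(\xb_v)$ would force $h_\sigma$ to be a $p^k$-th root of unity in $\FF_L(\xb_v)^\times$ up to an $n$-th root of unity, and ($p\nmid n$, characteristic $p$) this forces $h_\sigma\in\FF_L^{\times}$ to be constant and then, by the order condition, a scalar that is simultaneously an $n$-th root of unity and of $p$-power order, hence $1$. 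This establishes (1).

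For (2), the map $\sigma\mapsto b_\sigma$ from $\bar P_v$ into $(\FF_L,+)$ is then a group homomorphism: $\psi_{\sigma\tau}(\xb_v)=\psi_\sigma(\psi_\tau(\xb_v))=\xb_v+b_\tau+b_\sigma$ (using that $\psi_\sigma$ acts as the identity on the constants $b_\tau\in\FF_L$ once $a_\sigma=1$ and $\sigma\in P_v\subset I_v$ — wild inertia acts trivially on the residue field $\FF_L$). Since each nonzero $b_\sigma$ has additive order $p$ in $\FF_L$, the image is an elementary abelian $p$-group, and the homomorphism is injective because $\psi_\sigma=\mathrm{id}$ forces $b_\sigma=0$; hence $\bar P_v$ is elementary abelian. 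I expect the main obstacle to be the bookkeeping in part (1): pinning down that $\psi_\sigma$ can be chosen to fix $\yb_v$ on the nose, rather than up to an $n$-th root of unity times a unit, requires using both that $p\nmid n$ and that $G$ commutes with $I_v$, and one must be careful that we are describing the \emph{actual} automorphism $\psi_\sigma$ of $\Yb_v$ induced by $\sigma$, not merely its class modulo $G$ — so the argument that no nontrivial $h_\sigma$ can occur, via the $p$-power-order constraint in characteristic $p$, is the crucial point.
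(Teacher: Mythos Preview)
Your argument for the action on $\xb_v$ (that a $p$-power-order affine map in characteristic $p$ must be a translation) is correct and is exactly what the paper uses implicitly. Part (2) is also fine and matches the paper: once (1) holds, $\sigma\mapsto b_\sigma$ embeds $\bar P_v$ into $(\FF_L,+)$.

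The real difference is in how you handle $\yb_v$. You try to deduce $\psi_\sigma(\yb_v)=\yb_v$ from the Kummer relation alone, writing $\psi_\sigma(\yb_v)=h_\sigma(\xb_v)\,\yb_v$ and then invoking the $p$-power order. But iterating does \emph{not} give $h_\sigma^{p^k}=1$: it gives
\[
\prod_{j=0}^{p^k-1} h_\sigma(\xb_v+jb_\sigma)=1,
\]
which does not by itself force $h_\sigma$ to be constant. Your sentence ``would force $h_\sigma$ to be a $p^k$-th root of unity in $\FF_L(\xb_v)^\times$'' is the gap. (It can be repaired by a divisor argument: since $\sigma$ permutes the branch points and conjugate branch points carry the same exponent $a_\alpha$, the ratio $\fb_v(\xb_v+b_\sigma)/\fb_v(\xb_v)$ has trivial divisor, hence is a constant; then $h_\sigma\in\FF_L^\times$ and the order argument applies.)

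The paper avoids all of this with a one-line observation you overlooked: by definition $y_v=\pi^{-N_v/n}y$, and since the cover $y^n=f(x)$ is defined over $K$, every $\sigma\in\Gamma$ fixes $y$. Hence
\[
\sigma(y_v)=\big(\sigma(\pi)/\pi\big)^{-N_v/n}\,y_v,
\]
and reducing gives $\psi_\sigma(\yb_v)=\gamma_\sigma\,\yb_v$ with $\gamma_\sigma\in\FF_L^\times$ a \emph{constant} from the start. Since $\psi_\sigma$ has $p$-power order and $\FF_L^\times$ has no $p$-torsion, $\gamma_\sigma=1$. This is both shorter and cleaner than going through the Kummer relation, and it shows directly that no rational function $h_\sigma(\xb_v)$ ever enters the picture.
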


\begin{proof} Let $\sigma\in P_v$.  The definition $y_v=\pi^{-N_v/n} y$
implies that $\psi_\sigma(\yb_v)=\gamma_\sigma \yb_v$. Since $\sigma$
has $p$-power order, it follows that $\gamma_\sigma$ is
trivial.  We have chosen the chart $\lambda_v$ such that $\psi_\sigma$
acts on $\Xb_v$ as affine linear transformation (Lemma
\ref{monolem3}). Statement  (1) follows. Moreover,   we may identify
$\bar{P}_v$ with a subgroup of $\FF_L$. This implies (2).  \end{proof}

Proposition \ref{invprop1} allows us to compute the quotient cover
$\Yb_v/P_v\to \Xb_v/P_v$. The coordinates $\yb_v$ and
\[
\ub_v:=\prod_{\sigma\in \bar{P}_v} \psi_\sigma(\xb_v)=\prod_{\sigma\in
  \bar{P}_v}(\xb_v+b_\sigma)
\]
 are
$\bar{P}_v$-invariant and  generate the function ring of
$\Yb_v/P_v$.  The rational function $\fb_v(\xb_v)$ is an element of
$\FF_L(\ub_v)$, hence we may write $\fb_v(\xb_v)=\bar{g}_v(\ub_v)$.
The function $\bar{g}_v$ is easily determined explicitly by comparison of
coefficients.  We conclude that the curve $\Yb_v/P_v$ is
given by the Kummer equation 
\[
 \yb_v^n=\bar{g}_v(\ub_v).
\]
The Kummer cover $\Yb_v/P_v\to \Xb_v/P_v$ is given by
$(\ub_v, \yb_v)\mapsto \ub_v$. Note that the degree of this cover is
still $n$, since the intersection $G\cap \bar{P}_v\subset
\Aut_{\FF_L}(\Yb_v)$ is trivial.  

\bigskip\noindent It remains to consider the quotient of
$\Yb_v/P_v$ by $I_v/P_v=C_v$, which is cyclic of order prime
to $p$.  We choose an element $\sigma\in I_v$ whose image generates
$C_v$, this defines a section $C_v\to I_v$.  Define $\mu$ as the order
of $\psi_\sigma$ considered as automorphism of $\Yb_v$ and $m$
as the order of $\psi_\sigma\in \Aut(\Xb_v)$. Then $m\mid
\mu$. Moreover, $(\mu/m)\mid n$ since $\psi_\sigma^{m}\in
\Aut(\Yb_v)$ is an element of $G$, which is cyclic of order
$n$. In particular, we have that
\[
|G\cap \langle\psi_\sigma\rangle|=\frac{\mu}{m}.
\]
The cover $\Yb_v/I_v\to
\Xb_v/I_v$ is a Galois cover with Galois group
$G/(G\cap I_v)$, which is cyclic of order 
$\bar{n}:=n/(\mu/m)=nm/\mu$.

If $m=1$ we have that $\psi_\sigma\in G$ and the cover
$\Zb_{v,\FF_L}\to \Wb_{v,\FF_L}=\Xb_v/I_v$ is given by
\[
\zb_v^{n/\mu}=\bar{g}_v(\ub_v), \qquad \text{ where } \zb_v=\yb_v^\mu. 
\]

We consider the case $m\neq 1$. Recall from Lemma \ref{monolem3} that
$\psi_\sigma\in \Aut_{\FF_L}(\Xb_v)$ is an affine linear
transformation of order $m$ with at least one $\FF_L$-rational fixed
point (which we assumed to be $\xb_v=\infty$).  It follows that
the second fixed point is also $\FF_L$-rational.  After a further
normalization of the chart, we may assume that it is $\xb_v=0$.  With
this choice of chart we have that
\[
\psi_\sigma(\xb_v, \yb_v)=(c \xb_v,\gamma \yb_v)
\]
for some $c, \gamma\in \FF_L$.  The definitions of $\mu$ and $m$ imply
that $m=\ord(c)$ and $\mu=\lcm(m,\ord(\gamma))$. It follows that
$\gamma^{\mu/m}=c^s\in \FF_L$ for some integer $s$.

Since $P_v$ is a normal subgroup of $I_v$, the automorphism
$\psi_\sigma$ descends to an automorphism of $\Xb_v/P_v$, which we
still denote by $\psi_\sigma$.  The definition of the coordinate
$\ub_v$ of $\Xb_v/P_v$ implies that the fixed points $\xb_v=0, \infty$
map to $\ub_v=0, \infty$, respectively. It follows that
$\psi_\sigma(\ub_v)=\tilde{c}\ub_v$. Since the order of $\sigma$ is
prime to $p$ and hence prime to $|P_v|$, we have that
$\ord(\tilde{c})=\ord(c)=m$. 
 We conclude
that the functions
\[
\zb_v:=\yb_v^{\mu/m}\ub_v^{-s}, \qquad \wb_v:=\ub_v^m
\]
are invariant under $I_v$. We find the following Kummer equation: 
\begin{equation}\label{quotientIveq}
\Zb_{v, \FF_L}:\qquad \zb^{\bar{n}}=\frac{\yb_v^n}{\ub_v^{s\bar{n}}}=\frac{\fb_v(\xb_v)}{\xb_v^{s\bar{n}}}.
\end{equation}
Since the function algebra of the quotient curve
$\Zb_{v,\FF_L}$ is generated by $\zb_v$ and $\wb_v$,
it follows that the right-hand side of (\ref{quotientIveq}) is a
rational function $\hb_v(\wb_v)\in \FF_L(\wb_v).$ As in the previous
step, it is easy to calculate $\hb_v$.

The following proposition summarizes the above discussion.

\begin{proposition}\label{invprop2}
\begin{enumerate}
\item  We may choose the chart $\lambda_v$ such that 
\[
\psi_\sigma(\xb_v, \yb_v)=(c \xb_v, \gamma \yb_v), 
\]
for suitable constants $c, \gamma\in \FF_L^\times$.
\item
The cover $\Zb_{v,\FF_L}\to \Wb_{v,\FF_L}$ is given by a Kummer equation
\[
\zb_v^{\bar{n}}=\hb_v(\wb_v),
\]
where 
\[
   \ub_v :=\prod_{\sigma\in\bar{P}_v}\psi_\sigma(\xb_v), \quad 
   \wb_v:= \ub_v^m, \quad 
   \zb_v:=\yb_v^{\mu/m}\ub_v^{-s}.
\]
Moreover, we have $m=\ord(c)$, $\mu=\lcm(m,
\ord(\gamma))$, $c^s=\gamma^{\mu/m}$ and $\bar{n}=n/(\mu/m)$.
\end{enumerate}
\end{proposition}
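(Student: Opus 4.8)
\textbf{Proof plan for Proposition \ref{invprop2}.}
The plan is to simply assemble the two-step quotient construction carried out in the discussion preceding the statement, making the choices of chart explicit and checking the stated numerical relations. First I would recall the setup: by Lemma \ref{monolem3} we have already normalized the chart $\lambda_v$ so that $p_0=\bar\infty$ (or the nearest singular point) is the fixed point $\xb_v=\infty$ of $\Gamma_v$, and hence $\psi_\sigma$ is affine linear on $\Xb_v$ for all $\sigma\in\Gamma_v$. Fix $\sigma\in I_v$ whose image generates the cyclic quotient $C_v=I_v/P_v$, let $m=\ord(\psi_\sigma|_{\Xb_v})$ and $\mu=\ord(\psi_\sigma|_{\Yb_v})$. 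When $m\neq 1$, the affine linear map $\psi_\sigma$ on $\P^1_{\FF_L}$ has a second fixed point besides $\xb_v=\infty$, and because $\psi_\sigma$ is defined over $\FF_L$ (it lies in $\Aut(\FF_L(\xb_v))$) this second fixed point is automatically $\FF_L$-rational; a further change of chart in $\PGL_2(\OO_L)$ fixing $\infty$ moves it to $\xb_v=0$. With this chart $\psi_\sigma(\xb_v)=c\,\xb_v$ for some $c\in\FF_L^\times$, and since by Proposition \ref{invprop1} the element $\psi_\sigma$ acts on $\yb_v$ by a scalar (because $y_v=\pi^{-N_v/n}y$ and $G=\langle\yb_v\mapsto\zeta_n\yb_v\rangle$ commutes with $I_v$), we get $\psi_\sigma(\yb_v)=\gamma\,\yb_v$ for some $\gamma\in\FF_L^\times$. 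This proves part (1).

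For part (2) I would proceed in the two stages already laid out. In the first stage, quotient by the elementary abelian $p$-group $\bar P_v$: by Proposition \ref{invprop1} its elements act by $\xb_v\mapsto\xb_v+b_\sigma$, so $\ub_v:=\prod_{\sigma\in\bar P_v}\psi_\sigma(\xb_v)$ is $\bar P_v$-invariant, generates $\FF_L(\xb_v)^{\bar P_v}$, and since $G\cap\bar P_v$ is trivial the coordinate $\yb_v$ descends as well, giving $\Yb_v/P_v:\ \yb_v^n=\bar g_v(\ub_v)$ where $\bar g_v$ is obtained from $\fb_v(\xb_v)$ by comparison of coefficients. In the second stage, quotient the curve $\Yb_v/P_v$ by $C_v$. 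Here one checks that the induced action of $\psi_\sigma$ on $\ub_v$ is again multiplicative, $\psi_\sigma(\ub_v)=\tilde c\,\ub_v$, because the two fixed points $\xb_v=0,\infty$ map to $\ub_v=0,\infty$; and $\ord(\tilde c)=\ord(c)=m$ because $|P_v|$ is prime to $\ord(\sigma)$. Writing $\gamma^{\mu/m}=c^s\in\FF_L$ (possible since $\gamma^{\mu/m}$ has order dividing $m=\ord(c)$ and $\langle c\rangle$ contains all of $\FF_L$'s $m$-th roots of unity once $L\supset\mu_n$), one verifies directly that $\wb_v:=\ub_v^m$ and $\zb_v:=\yb_v^{\mu/m}\ub_v^{-s}$ are $C_v$-invariant, generate the function algebra of the quotient, and satisfy $\zb_v^{\bar n}=\yb_v^n/\ub_v^{s\bar n}=\hb_v(\wb_v)$ with $\bar n=n/(\mu/m)$; the case $m=1$ is the degenerate special case $\wb_v=\ub_v$, $\zb_v=\yb_v^\mu$. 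Finally I would note that $\Zb_{v,\FF_L}\to\Wb_{v,\FF_L}$ is Galois with group $G/(G\cap I_v)$ of order $\bar n$ because $|G\cap\langle\psi_\sigma\rangle|=\mu/m$, which identifies the displayed Kummer equation with the cover in question.

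The main obstacle is bookkeeping rather than any genuine difficulty: one must be careful that the two normalizations of the chart (first fixing $\infty$ as in Lemma \ref{monolem3}, then fixing $0$ as the second fixed point of $\psi_\sigma$) are compatible, i.e.\ that the second can be achieved by an element of $\PGL_2(\OO_L)$ fixing $\xb_v=\infty$ so that the affine-linearity from Lemma \ref{monolem3} is preserved; and that the choice of the section $C_v\to I_v$, i.e.\ of the generator $\sigma$, does not affect the isomorphism class of the resulting equation. One should also check that $\ub_v$ remains a well-defined coordinate after the second base change of chart and that the integer $s$ with $c^s=\gamma^{\mu/m}$ is well-defined modulo $m$, so that $\zb_v$ is determined up to an $m$-th root of unity — harmless since we only care about the Kummer equation up to the obvious equivalence. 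None of these points is serious; each reduces to an elementary computation in $\PGL_2$ and in the cyclic groups involved.
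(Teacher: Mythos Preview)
Your proposal is correct and follows essentially the same route as the paper: the proposition is explicitly stated there as a summary of the preceding discussion, and your two-step argument (normalize the chart so that the second fixed point of $\psi_\sigma$ is $\xb_v=0$, then take the quotient first by $\bar P_v$ via the additive invariant $\ub_v$ and then by $C_v$ via the multiplicative invariants $\wb_v,\zb_v$) reproduces that discussion point by point. Your remarks on the bookkeeping issues (compatibility of the two chart normalizations, well-definedness of $s$ modulo $m$) are apt and go slightly beyond what the paper spells out, but do not constitute a different method.
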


In \S~\ref{exa1} we give an example where the degree $\bar{n}$ of the
quotient Kummer cover is strictly smaller than $n$ (Remark \ref{nbarrem}).

\begin{remark}
  In the case that $\mu/m=n$ the Galois group $G$ of the cover $\Yb_v\to
  \Xb_v$ is contained in $\langle \psi_\sigma\rangle\subset I_v$. In this case
  the quotient curve $\Zb_{v, \FF_L}=\Yb_v/I_v$ is a union of curves of genus
  $0$, since each component is isomorphic to a quotient of $\Xb_v$.  It
  follows that $v$ does not contribute to the $L$-function, and we may
  disregard $v$ in the rest of the calculation. An example can be found in
  \S~\ref{exa14}.
\end{remark}

\subsection{}\label{mono4} 
In this section we describe how to compute the quotient curve
$\Zb=\Yb/\Gamma=\Zb_{\FF_L}/(\Gamma/I)$, together with the map $\Zb\to
\Wb=\Xb/\Gamma$. We write $\Gammab:=\Gamma/I\simeq \Gal(\FF_L/\FF_K)$. 

In \S~\ref{mono2} we have already described the action of $\Gamma$ on $\Xb$,
and therefore on the set of irreducible components. This action is induced by
the action of $\Gamma$ on the roots of the polynomial $f$, which is assumed to
be known. As a result, the action of $\Gammab=\Gamma/I$ on the irreducible
components of $\Wb_{\FF_L}=\Xb/I$ may therefore be considered as known. 

Let us choose $v\in V(\Delta)$. As before, we denote by
$\Wb_{v,\FF_L}$ (resp.\ $\Wb_v$) the irreducible component of
$\Wb_{\FF_L}$ (resp.\ of $\Wb$) corresponding to the $I$-orbit
(resp.\ to the $\Gamma$-orbit) of $v$.  Similarly, we write
$\Zb_{v,\FF_L}=\Zb_{\FF_L}|_{\Wb_{v,\FF_L}}$ and
$\Zb_v:=\Zb|_{\Wb_v}$.  Let $\Gammab_v\subset \Gammab$ be the
stabilizer of $\Wb_v$ and put $\FF_v=\FF_L^{\Gammab_v}$.

Recall from Proposition \ref{invprop2} that the cover
$\Zb_{v,\FF_L}\to\Wb_{v,\FF_L}$ is given birationally by a Kummer equation
\[
     \zb_v^{\bar{n}} = \hb_v,
\]
where $\hb_v\in \FF_L(\wb_v)$ is a rational function in the coordinate
$\wb_v$ for the projective line $\Wb_{v,\FF_L}$.

\begin{proposition} \label{Gammabprop}
\begin{enumerate}
\item
  The curve $\Wb_v$ is isomorphic to the projective line over $\FF_v$, and a
  coordinate $\wb_v'$ corresponding to such an isomorphism can be explicitly
  computed. 
\item
  The cover $\Zb_v\to\Wb_v$ is birationally given by a Kummer equation
  \[
       (\zb_v')^{\bar{n}}=\hb_v',
  \]
  where $\hb_v'$ is a polynomial in $\wb_v'$ with $\FF_v$-coefficients. The
  polynomial $\hb_v'$ can be explicitly computed. 
\end{enumerate}
\end{proposition}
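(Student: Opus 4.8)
The plan is to descend the Kummer cover $\Zb_{v,\FF_L}\to\Wb_{v,\FF_L}$, together with its two coordinates, from $\FF_L$ down to the constant field $\FF_v=\FF_L^{\Gammab_v}$ by a sequence of applications of Hilbert's Theorem~90, each of which is effective over a finite field. Throughout we use that $\Wb_{v,\FF_L}\cong\Wb_v\otimes_{\FF_v}\FF_L$ and $\Zb_{v,\FF_L}\cong\Zb_v\otimes_{\FF_v}\FF_L$, so that $\Gammab_v=\Gamma_v/I_v=\Gal(\FF_L/\FF_v)$ acts semilinearly on $\Wb_{v,\FF_L}\cong\P^1_{\FF_L}$ and on the cover above it, and $\Wb_v=(\Wb_{v,\FF_L})^{\Gammab_v}$, $\Zb_v=(\Zb_{v,\FF_L})^{\Gammab_v}$ by Galois descent.

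I begin with (1). By Lemma~\ref{monolem3} and the normalization in Proposition~\ref{invprop2}, the chart $\lambda_v$ may be chosen so that $\wb_v=\infty$ is the image of a point $p_1\in\Xb_v$ which is $\FF_L$-rational and fixed by all of $\Gamma_v$; as $p_1$ is $\Gamma$-invariant, its image is an $\FF_v$-rational point of $\Wb_v$. Hence each $\sigma\in\Gammab_v$ fixes the place $\wb_v=\infty$ and so sends $\wb_v$ to $a_\sigma\,\sigma(\wb_v)+b_\sigma$ with $a_\sigma\in\FF_L^\times$, $b_\sigma\in\FF_L$; thus $\sigma\mapsto(a_\sigma,b_\sigma)$ is a $1$-cocycle valued in the affine group $\FF_L\rtimes\FF_L^\times$. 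Applying Hilbert~90 to $\FF_L^\times$ gives $\lambda$ with $a_\sigma=\sigma(\lambda)/\lambda$; after replacing $\wb_v$ by $\lambda^{-1}\wb_v$ the cocycle becomes additive, and the additive Hilbert~90 (surjectivity of the trace $\FF_L\to\FF_v$) then lets us translate $\wb_v$ to a $\Gammab_v$-invariant coordinate $\wb_v'$. This $\wb_v'$ exhibits the isomorphism $\Wb_v\liso\P^1_{\FF_v}$ (existence alone also follows since $\Wb_v$ is a genus-zero curve over a finite field with a rational point), and the two Hilbert~90 steps are carried out by explicit linear algebra over $\FF_L$.

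For (2), after this substitution the cover is $\zb_v^{\bar{n}}=\hb_v(\wb_v')$ with $\hb_v\in\FF_L(\wb_v')$. Given $\sigma\in\Gammab_v$, the $\bar{n}$-th power of $\rho_\sigma(\zb_v)$ equals $\hb_v^\sigma(\wb_v')\in\FF_L(\wb_v')$, so $\rho_\sigma(\zb_v)$ is an eigenfunction for the Kummer deck group $\mu_{\bar{n}}$ of $\Zb_{v,\FF_L}\to\Wb_{v,\FF_L}$, hence of the form $c_\sigma\,\zb_v^{e_\sigma}$ with $c_\sigma\in\FF_L(\wb_v')^\times$; since $\rho_\sigma$ conjugates this deck group by the cyclotomic action (it is induced by the automorphisms $y\mapsto\zeta y$), one gets $e_\sigma=1$, i.e.\ $\rho_\sigma(\zb_v)=c_\sigma\,\zb_v$. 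The cocycle relation $c_{\sigma\tau}=\sigma(c_\tau)\,c_\sigma$ then says $(c_\sigma)\in Z^1(\Gammab_v,\FF_L(\wb_v')^\times)$ for the cyclic function-field extension $\FF_L(\wb_v')/\FF_v(\wb_v')$, and Hilbert~90 produces $u\in\FF_L(\wb_v')^\times$ with $c_\sigma=\sigma(u)/u$ (computable by the usual averaging construction). Then $\zb_v':=u^{-1}\zb_v$ is $\Gammab_v$-invariant, so $(\zb_v')^{\bar{n}}=u^{-\bar{n}}\hb_v=:\hb_v'$ lies in $\FF_v(\wb_v')$; multiplying $\zb_v'$ by the denominator of $\hb_v'$ makes $\hb_v'$ a polynomial in $\wb_v'$ with $\FF_v$-coefficients, as required.

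The work is bookkeeping rather than a new idea. What makes it go are two facts: (a) the chart puts the $\Gammab_v$-fixed $\FF_v$-rational point at $\wb_v=\infty$, so the base cocycle lands in the affine group and not all of $\PGL_2(\FF_L)$ --- this is precisely the role of the point $p_1$ from Lemma~\ref{monolem3}; and (b) $\Gammab_v$ acts on $\zb_v$ by a function-valued scalar, with no extra power of $\zb_v$. Granting these, the proposition is a mechanical application of Hilbert~90 over finite fields, which is also the source of the claimed explicitness; the only routine care needed is to follow the Galois action through the successive substitutions from $\xb_v$ to $\ub_v$, $\wb_v$, $\wb_v'$ and from $\yb_v$ to $\zb_v$, $\zb_v'$.
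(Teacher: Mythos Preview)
Your proof is correct and, for Part (1), follows essentially the same path as the paper: the $\Gammab_v$-cocycle on $\Wb_{v,\FF_L}\cong\P^1_{\FF_L}$ lands in the affine group thanks to the $\Gamma_v$-fixed point at infinity furnished by Lemma~\ref{monolem3}, and one trivializes it by the multiplicative and additive forms of Hilbert~90.

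For Part (2) the overall structure also agrees (show the semilinear action on $\zb_v$ has no extra exponent, then trivialize the resulting scalar cocycle), but your justification of the crucial step $e_\sigma=1$ is genuinely different from the paper's. The paper first normalizes $\hb_v'$ to be a polynomial without $\bar n$th-power factors and then argues directly on $\hb_v'$: since the roots of $\hb_v'$ are specializations of the branch points of the $K$-cover $Y\to X$, Galois-conjugate roots occur with the same multiplicity, whence $\psi_\tau(\hb_v')=\bar q_\tau\hb_v'$ with $\bar q_\tau\in\FF_L^\times$, and this forces the exponent to be $1$. Your route is more structural: the deck group $\bar G$ is a quotient of $G=\mu_n$ acting by $y\mapsto\zeta y$, so $\Gammab_v$ conjugates $\bar G$ via the cyclotomic character; comparing this with the conjugation computed from $\rho_\sigma(\zb_v)=c_\sigma\zb_v^{e_\sigma}$ (which yields $\xi\mapsto\sigma(\xi)^{e_\sigma^{-1}}$ on $\mu_{\bar n}$) gives $e_\sigma=1$. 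Both arguments are valid. The paper's has the small bonus that its $\bar q_\tau$ is visibly a constant in $\FF_L^\times$, so the final Hilbert~90 is over the finite field $\FF_L$; in your version $c_\sigma$ is a priori only in $\FF_L(\wb_v')^\times$, so you invoke Hilbert~90 for the cyclic function-field extension $\FF_L(\wb_v')/\FF_v(\wb_v')$, which is equally legitimate but slightly less elementary. Conversely, your argument is cleaner conceptually and avoids the preliminary normalization of $\hb_v'$.
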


\begin{proof}
Since $\Wb_v$ is a curve of genus zero over  $\FF_v$, the
first part of (1) follows from the fact that the Brauer group of the finite
field $\FF_v$ is trivial. However, in order to justify the second claim in (1)
it is better to give a more direct proof which does not use the Brauer group
(and therefore does not depend on $\FF_v$ being finite). 

By Proposition \ref{invprop2}, the function field of $\Wb_{v,\FF_L}$
is $\FF_L(\wb_v)$, where $\wb_v$ is an explicit polynomial in 
the chosen coordinate $\xb_v$ on $\Xb_v$. The semilinear action of $\Gammab_v$
is therefore given by a cocycle
\[
   (A_\tau)_\tau \in Z^1(\Gammab_v,{\rm PGL}_2(\FF_L)^{\rm opp}), 
\]
which can be explicitly computed from the knowledge of the cocycle from Remark
\ref{monorem}. Moreover, since $\wb_v$ is a polynomial in $\xb_v$, Lemma
\ref{monolem3} shows that $A_\tau$ corresponds to an affine linear
transformation, i.e.
\[
   A_\tau = \begin{pmatrix} \bar{a}_\tau&\bar{b}_\tau\\0&1 \end{pmatrix},
\]
with $\bar{a}_\tau,\bar{b}_\tau\in\FF_L$. To prove (1) we need to find a
coordinate $\wb_v'$ which is $\Gammab_v$-invariant. In other words, we need to
find a matrix 
\[
    B=\begin{pmatrix} \alpha&\beta\\ 0&1 \end{pmatrix} \in{\rm GL}_2(\FF_L)
\]
such that $A_\tau=\tau(B)B^{-1}$ for all $\tau\in\Gammab_v$. This translates
to 
\[
  \frac{\alpha}{\tau(\alpha)}=\bar{a}_\tau, \qquad
  \beta-\tau(\beta)=\bar{b}_\tau \tau(\alpha).
\]
In fact, it suffices to solve this equation for a generator $\tau$ of
$\Gammab_v$.  Clearly, solutions $\alpha,\beta\in\FF_L$ may be found
explicitly as in the proof of the additive and multiplicative versions of
Hilbert's Theorem $90$. This completes the proof of (1).

It remains to prove (2). By (1) we can write $\hb_v$ as a rational function in
$\wb_v'$ with coefficients in $\FF_L$.
There exists a  rational function $\hb_v''\in \FF_L(\wb_v')$ such that
\[
         \hb_v'=\hb_v(\hb_v'')^{\bar{n}},
\]
is a polynomial in $\FF_L[\wb_v']$ which does not have any nontrivial
factors which are $\bar{n}$th powers. We set
$\zb_v':=(\hb_v'')\zb_v$. The cover $\Zb_{v,\FF_L}\to\Wb_{v,\FF_L}$ is
now given by the Kummer equation
\begin{equation} \label{Gammabpropeq1}
        (\zb_v')^{\bar{n}} = \hb_v'.
\end{equation}

For $\tau\in \Gammab_v$, we write $\psi_\tau$ for the (semilinear)
automorphism of $\Zb_{v,\FF_L}$ induced by $\tau$.
 We claim that for
any element $\tau\in\Gammab_v$ we have
\begin{equation} \label{Gammabpropeq2}
       \psi_\tau(\zb_v')=\bar{q}_\tau\cdot\zb_v', \qquad
     \text{with $\bar{q}_\tau\in\FF_L[\wb_v']$.}
\end{equation}

To see this, note that the extension 
\begin{equation}\label{Gammabpropeq3}
\FF_L(\Zb_{v, \FF_L})\supset \FF_v(\Wb_v)\simeq \FF_v(\wb_v')
\end{equation}
 of functions rings is a Galois extension. Recall that the Galois group
 $\bar{G}:=\Gal(\Zb_{v,\FF_L}/\Wb_{v,\FF_L})$ is cyclic of order
 $\bar{n}$. Since $\FF_L$ contains the $\bar{n}$th roots of unity,
 $\bar{G}$ is a normal subgroup of the Galois group of the extension
 (\ref{Gammabpropeq3}), which is a quotient of $\Gamma_v$. It follows
 that $\psi_{\tau}(\zb_v')$ is a Kummer generator of $\Zb_{v,
   \FF_L}/\Wb_{v, \FF_L}$.  Kummer theory implies that
\begin{equation}\label{Gammabpropeq4}
   \psi_\tau(\zb_v')=\bar{q}_\tau\cdot(\zb_v')^{m_\tau},
\end{equation}
where $m_\tau\in\{1,\ldots,\bar{n}-1\}$ represents the character
$\chi:\Gammab_v\to(\ZZ/\bar{n}\ZZ)^\times$ which determines the action
of $\Gammab_v$ on $\bar{G}$ by conjugation.  The claim
(\ref{Gammabpropeq2}) states that the character $\chi$ is trivial.
%The character $\chi$ determines the conjugation action of $\Gammab_v$
%on the cyclic group $\bar{G}$.

To prove that $\chi$ is trivial, we consider the action of $\psi_\tau$
on the polynomial $\hb_v'$.  Recall that $\hb_v'$ is a polynomial which does
not have any nontrivial factors that are $\bar{n}$th powers. It
follows that the roots of $\hb_v'$ are branched in the Kummer cover
$\Zb_{v,\FF_L}\to \Wb_{v,\FF_L}$.  More precisely, the roots of
$\hb_v'$ are  the images of the branch points of the cover
$Y\to X$ that specialize to $\Xb_v$. In particular, it follows that
$\Gamma_v$ acts on the set of roots of $\hb_v'$.  

It also follows that the order of vanishing of a zero of $\hb_v'$ is
equivalent $\pmod{\bar{n}}$ to the order of vanishing of the
corresponding zero of the polynomial $f$ describing the Kummer cover
$Y\to X$.  Since $Y\to X$ is defined over $K$ it follows that any two
roots of $\hb_v'$ which are conjugate under the action of $\Gamma_v$
have the same order of vanishing in $\hb_v'$. The coordinate $\wb_v'$
is already invariant under $\tau$. We conclude that
\[
\psi_\tau(\hb_v')=\bar{q}_\tau\cdot\hb_v'\qquad \text{ with
}\bar{q}_\tau\in \FF_L^\times,
\]
for all $\tau\in \Gammab_v$. With \eqref{Gammabpropeq1} it follows
that $m_\tau$ in (\ref{Gammabpropeq4}) is trivial for all $\tau\in
\Gammab_v$, and hence that the character $\chi$ is trivial. This
proves the claim \eqref{Gammabpropeq2}.

Replacing $\zb_v'$ with $\gamma\zb_v'$, for some
$\gamma\in\FF_L^\times$, has the effect of replacing $\bar{q}_\tau$
with $\bar{q}_\tau\tau(\gamma)\gamma^{-1}$. Using again Hilbert's
Theorem 90, we may assume that $\bar{q}_\tau=1$, i.e.\ that $\zb_v'$
is invariant under the action of $\Gammab_v$.

The extension of function rings $F(\Zb_v)/F(\Wb_v)=\FF_v(\wb_v')$ has degree
$\bar{n}$, which is the same as the degree of the Kummer equation for
$\zb_v'$. We conclude that $\zb_v'$ is a generator of the extension of
function rings $F(\Zb_v)/F(\Wb_v)=\FF_v(\wb_v')$. The proof of the
proposition is now complete. 
\end{proof}

Proposition \ref{Gammabprop} gives an explicit description of the
(possibly reducible) curves $\Zb_v=\Zb|_{\Wb_v}$.  Remark
\ref{admissrem2}.(2) implies that $\Zb_v$ is smooth. It follows that
the normalization $\pi:\Zb^{(0)}\to \Zb$ is the disjoint union of the
curves $\Zb_v$, where $v$ runs over a subset of $V(\Delta)$
representing the $\Gamma$-orbits.  We therefore have
an explicit description of the normalization $\Zb^{(0)}$ as well.

 As explained in \S~\ref{cohomology}, it remains to describe the
 singular locus $\Zb^{(1)}:=\pi^{-1}(\Zb^{\rm sing})\subset\Zb^{(0)}$.
 Remark \ref{admissrem2} implies that $\Zb^{(1)}$ is the inverse image
 of $\Wb^{(1)}\subset\Wb$ under the map $\Zb\to\Wb$, where $\Wb^{(i)}$
 is defined analogously to $\Zb^{(i)}$ for $i=0,1$.  Since the map
 $\Zb^{(0)}\to\Wb^{(0)}$ has an explicit description as a disjoint
 union of Kummer covers, it suffices to describe the closed subset
 $W^{(1)}\subset\Wb^{(0)}$. Since $\Wb=\Xb/\Gamma$, an explicit
 description of $\Wb^{(1)}\subset\Wb^{(0)}$ can immediately be derived
 from the inclusion $\Xb^{(1)}\subset\Xb^{(0)}$. This is easy using
 the description of $\Xb$ as a tree of projective lines in
 \S~\ref{kummer2}.

% %%%%%%%%%%%%%%%%%%%%%%%%%%%%%%%%%
% \section{Applications \& examples}
% %%%%%%%%%%%%%%%%%%%%%%%%%%%%%%%%%

%\input{example1}

\section{Example I}\label{exa1}

In this section and the next we compute the local $L$-factor and the
exponent of conductor of two  superelliptic curves.

\subsection{}

We consider the Kummer cover $\phi:Y\to X=\P^1_K$ over $K:=\Q_3$ given by the
equation
\[
    y^4 = f(x) = (x^2-3)(x^2+3)(x^2-6x-3).
\]
The branch points of $\phi$ are the six roots of $f$ (with
ramification index $4$) and the point at $\infty$ (with ramification
index $2$). The Riemann--Hurwitz formula shows that the genus of $Y$
is $7$.

The splitting field of $f$ over $K$ is the biquadratic extension
$L_0:=K(i,3^{1/2})$, where $i$ is a fourth root of unity and $3^{1/2}$ is a
square root of three. In fact, the roots of $f$ are
\[
    \pm 3^{1/2}, \pm i 3^{1/2},\alpha,\alpha',
\]
where $\alpha=3-2\cdot 3^{1/2}, \alpha'=3+2\cdot 3^{1/2}\in L_0$ are the two
roots of $x^2-6x-3$. Note that $K(i)/K$ is the maximal unramified subextension
and that the residue field of $K(i)$ (and of $L_0$) is the field $\FF_9$ with
$9$ elements. 

Let $L:=L_0(3^{1/4})$ be the extension obtained by adjoining a square
root $3^{1/4}$ of $3^{1/2}$. Since $K(i)$ already contains all $4$th
roots of unity, we see that $L/K$ is a Galois extension whose Galois
group $\Gamma$ is the dihedral group of order $8$. The inertia
subgroup $I\lhd\Gamma$ is the unique cyclic subgroup of order
$4$. Moreover, $L$ satisfies Assumption \ref{Lass} and $Y_L$ has
semistable reduction over~$L$.

\subsection{}\label{exa12}

Let $(\cX,\cD)$ denote the stably marked model of $(X_L,D_L)$ and $(\Xb,\Db)$
the special fiber of $(\cX,\cD)$, see \S~\ref{admiss2}. We note that 
\[
     \frac{\alpha- 3^{1/2}}{3^{1/2}}\equiv 0\pmod{3^{1/4}},\quad
     \frac{\alpha'-(- 3^{1/2})}{3^{1/2}}\equiv 0\pmod{3^{1/4}},
\]
and that there are no further congruences between the elements of $D_L$.
Following Remark \ref{treerem1} one easily sees that $\cX$ is given by the
three charts $\lambda_i:X_L\to\P^1_L$, $i=1,2,3$ corresponding to the
parameters
\[
    x_1:=3^{-1/2}x,\quad x_2:=\frac{x-3^{1/2}}{3},\quad
    x_3:=\frac{x+3^{1/2}}{3}.
\]
Let $\Xb_i\subset \bar{X}$ be the irreducible component
corresponding to $\lambda_i$. Then $\bar{X}$ looks as follows:
\[
\setlength{\unitlength}{0.7mm}
\begin{picture}(80,50)

\put(0,10){\line(1,0){85}}
\put(15,0){\line(0,1){35}}
\put(30,0){\line(0,1){35}}

\put(15,20){\circle*{2.2}}
\put(15,30){\circle*{2.2}}
\put(30,20){\circle*{2.2}}
\put(30,30){\circle*{2.2}}
\put(45,10){\circle*{2.2}}
\put(60,10){\circle*{2.2}}
\put(75,10){\circle*{2.2}}

\put(93,8){\smaller[1]$\Xb_1$}
\put(12,41){\smaller[1]$\Xb_2$}
\put(27,41){\smaller[1]$\Xb_3$}

\put(3,28.5){\smaller[2]$3^{1/2}$}
\put(3,19){\smaller[2]$\alpha$}
\put(33,28.5){\smaller[2]$-3^{1/2}$}
\put(34,19){\smaller[2]$\alpha'$}
\put(42,2){\smaller[2]$i3^{1/2}$}
\put(56,2){\smaller[2]$-i3^{1/2}$}
\put(73,2){\smaller[2]$\infty$}

\end{picture}
\]
In this picture the dots indicate the position of the points
$\bar{\alpha}_i\in \bar{D}\subset \bar{X}$. Next to the dots one finds
the value of the corresponding point $\alpha_i\in D_L\subset
X_L=\P^1_L$.

\subsection{}\label{exa13}

Let $\cY$ denote the normalization of $\cX$ in the function field of $Y_L$. We
 use Proposition \ref{kummerprop2} to show that $\cY$ is a semistable
model of $Y_L$ and to describe its special fiber $\Yb$.

Let $\eta_i$ denote the discrete valuation corresponding to the
component $\Xb_i$ on the function field $F(X_L)=L(x)$, where we
normalize $\eta_i$ by $\eta_i(3)=1$. Set $N_i:=\eta_i(f)$. For $i=1$
we write
\[
   f(x)=f(3^{1/2}x_1)=3^3(x_1^2-1)(x_1^2+1)(x_1^2-2\cdot 3^{1/2}x_1-1),
\]
from which we conclude that
\[
    \eta_1(f)=3, \quad \bar{f}_1=(\xb_1^2-1)^2(\xb_1^2+1).
\]
Similarly, we check that for $i=2,3$ we have
\[
    \eta_i(f)=4, \quad \bar{f}_i=2\xb_i(\xb_i-1).
\]
By the first part of Proposition \ref{kummerprop2} it follows that
$\cY$ is semistable. The second part of the proposition implies that
there is a unique irreducible component $\bar{Y}_i$ of $\bar{Y}$ lying
above $\bar{X}_i$. The restriction $\Yb_i\to\Xb_i$ is the Kummer
cover with equation $\bar{y}_i^4=\bar{f}_i$, for $i=1,2,3$.  Note that
the genus of $\bar{Y}_1$ is equal to $3$, whereas
$\bar{Y}_2$ and $\bar{Y}_3$ have genus $1$.

To describe $\bar{Y}$ it remains to describe the singular locus of
$\bar{Y}$.  By Remark \ref{admissrem2}.(2), the singular locus of
$\bar{Y}$ is precisely the inverse image of the singular locus of
$\bar{X}$. The latter is contained in the component $\bar{X}_1$, and
consists of the two points with $\bar{x}_1=\pm 1$.  Note that the
points above $\bar{x}_1=\pm 1$ have ramification index $2$ in the
cover $\bar{Y}_1\to \bar{X}_1$. Hence $\Yb$ contains $2\cdot (4/2)=4$
singular points: two intersection points of $\Yb_2$ with $\Yb_1$ and
two intersection points of $\Yb_3$ with $\Yb_1$.
The curve $\bar{Y}$ therefore  looks as follows.
\[
\setlength{\unitlength}{0.7mm}
\begin{picture}(90,45)

\put(0,10){\line(1,0){80}}
\put(15,5){\oval(10,45)[t]}
\put(40,5){\oval(10,45)[t]}

\put(85,9){\smaller[1]$\bar{Y}_1$}
\put(12,33){\smaller[1]$\bar{Y}_2$}
\put(37,33){\smaller[1]$\bar{Y}_3$}

\end{picture}
\]
Note that the arithmetic genus of $\bar{Y}$ equals $3+1+1+2=7$, which is
equal to the genus of $Y$, as it should be. 

\subsection{}\label{exa14}

We now look at the action of $\Gamma=\Gal(L/K)$ on $\bar{Y}$. Let
$\sigma,\tau\in\Gamma$ be the two generators given by
\[\begin{split}
    \sigma(3^{1/4})=i\cdot 3^{1/4}, &\quad \sigma(i)=i,\\
    \tau(3^{1/4})=3^{1/4}, & \quad \tau(i)=-i.
\end{split}\] 
Recall that the inertia subgroup group $I\subset\Gamma$ is cyclic of
order $4$, hence $I$ is generated by $\sigma$.

Following the strategy of \S~\ref{mono} we first study the action of
$I=\langle\sigma\rangle$ on $(\bar{X},\bar{D})$, which is determined
by its action on the set $D_L$.

The element $\sigma\in
I$ acts  as an involution on $D_L$, as
follows:
\[
    3^{1/2} \leftrightarrow -3^{1/2}, \quad  
   i3^{1/2} \leftrightarrow -i3^{1/2}, \quad
   \alpha  \leftrightarrow \alpha'.
\]
It follows that the automorphism $\psi_\sigma$ of $\Xb$ maps the
component $\bar{X}_1$ of $\bar{X}$ to itself and interchanges the two
components $\bar{X}_2,\bar{X}_3$.  We conclude that 
$\psi_\sigma$ also  fixes the component $\bar{Y}_1$
of $\bar{Y}$ and interchanges
$\bar{Y}_2$ with $\bar{Y}_3$.

As a second step we determine the quotients $\Zb_{\FF_L}=\Yb/I\to
\Wb_{\FF_L}=\Xb/I$.
The definition of $x_1$ as $x_1=x/3^{1/2}$ implies that the
restriction of $\psi_\sigma$ to $\Xb_1$ is given by
$\psi_\sigma(\bar{x}_1)=-\bar{x}_1$.  The coordinate $\bar{y}_1$ is the
image in $\FF_L(\bar{Y}_1)$ of $y_1:=\pi^{-N_1/n}y=3^{-3/4}y$ (Proposition
\ref{kummerprop2}.(2)). It follows that
\[
   \psi_\sigma(\xb_1, \bar{y}_1)=(-\xb_1, i\yb_1).
\]
Therefore the Kummer equation for $\Yb_1/I_1\to \Xb_1$ from
Proposition \ref{invprop2}.(2) is given by
\begin{equation}\label{quoIeq1}
\zb_1^2=\wb_1(\wb_1+1), \quad \wb_1=\xb_1^2, \,
\zb_1=\yb_1^2\xb_1/(\xb_1^2-1).
\end{equation}
This implies that $\Zb_{1,\FF_9}\cong\P^1_{\FF_9}$ has genus zero. 

\begin{remark}\label{nbarrem} Note that  $\psi_\sigma$ considered as 
automorphism of $\Yb_1$ has order $4$, which is strictly larger than
the order of the corresponding automorphism of $\Xb_1$. This is the
reason why the quotient Kummer cover $\Zb_{1,\FF_L}\to \Wb_{1,\FF_L}$ has
degree $\bar{n}=4/2=2$.
\end{remark}

A similar analysis shows that $\psi_\sigma(\xb_2)=\xb_3$ and 
$\psi_{\sigma^2}(\xb_2)=\xb_2$.  The restriction of $\psi_{\sigma^2}$
to $\bar{Y}_2\cup \bar{Y}_3$ is the identity since $y_2=y/3$. We have
already seen that $\psi_\sigma$ interchanges $\Yb_2$ and $\Yb_3$. We
conclude that $\bar{Z}_{2,\FF_9}:=(\bar{Y}_2\cup \bar{Y}_3)/I$ is an
isomorphic copy of $\bar{Y}_2$ (or $\bar{Y}_3$). The quotient cover
$\Zb_{2, \FF_L}\to \Wb_{2,\FF_L}$ is the same as the original cover
$\Yb_2\to \Xb_2$, 
i.e.\
\begin{equation}\label{quoIeq2}
\zb_2^4=2\wb_2(\wb_2-1), \qquad \zb_2:=\yb_2=\yb_3, \, \wb_2=\xb_2+\xb_3.
\end{equation}

It follows that the quotient curve $\bar{Z}_{\FF_9}:=\bar{Y}/I$ is a
semistable curve over $\FF_9$ consisting of two irreducible components
$\bar{Z}_{1,\FF_9}$ and $\bar{Z}_{2,\FF_9}$ intersecting each other in
two points, as follows.
\[
\setlength{\unitlength}{0.7mm}
\begin{picture}(60,45)

\put(0,10){\line(1,0){50}}
\put(20,5){\oval(10,45)[t]}

\put(55,9){\smaller[1]$\bar{Z}_{1,\FF_9}$}
\put(17,33){\smaller[1]$\bar{Z}_{2,\FF_9}$}

\end{picture}
\]
The arithmetic genus of $\bar{Z}_{\FF_9}$ is equal to
$g(\bar{Z}_{\FF_9})=g(\bar{Z}_{1,\FF_9})+g(\bar{Z}_{2,\FF_9})+1=0+1+1=2$.

\subsection{}\label{exa15}

It remains to determine the semilinear action of
$\Gammab=\Gamma/I=\langle \bar{\tau}\rangle$ on $\Zb_{\FF_L}$ and the
quotient $\Zb:=\Zb_{\FF_L}/\Gammab=\Yb/\Gamma$.  By considering the
action of $\tau$ on the branch points of $\phi$ as in \S~\ref{exa14},
we see that $\psi_{\bar{\tau}}$ acts trivially on the graph $\Delta$
of components of $\Xb$. Since there is a unique irreducible component
of $\Yb$ above $\Xb$, $\psi_{\bar{\tau}}$ also acts trivially on the
graph of components of $\Yb$.

From the proof of Proposition \ref{Gammabprop} it follows that
$\bar{\tau}$ leaves the coordinates $\zb_i, \wb_i$ defined in
(\ref{quoIeq1}) and (\ref{quoIeq2}) invariant.  We conclude that
$\Zb_{\FF_L}$ is already the correct model over $\FF_3$. Note that the
$\langle\bar{\tau}\rangle\simeq \Gal(\FF_9/\FF_3)$ acts semilinearly
on $\Zb=\Zb_{\FF_L}$. For example, the singular locus of $\bar{Z}$
consists of two geometric points which are conjugate over the
quadratic extension $\F_9/\F_3$. This completes our description of
$\bar{Z}$.

\subsection{}

We can now write down the local $L$-factor of the curve $Y/\Q_3$. By
Corollary \ref{ssredcor1}, the local factor is
\[
       L_3(Y,s)=P_1(\bar{Z},3^{-s}), 
\]
where
\[
     P_1(\bar{Z},T):= \det\big(1-{\rm Frob}_3\cdot T|H^1(\bar{Z},\Q_\ell)\big)
\]
and where ${\rm Frob}_3:\bar{Z}_{\F_3}\to\ \bar{Z}_{\F_3}$ is the
$\F_3$-Frobenius endomorphism.

The normalization of $\bar{Z}$ is equal to the disjoint union of
$\bar{Z}_1\cong\P^1_k$ and $\bar{Z}_2$. Lemma \ref{cohomologylem}.(1)
implies that 
\[
H^1_\et(\Zb_k,\Q_\ell)=H^1(\Delta_{\Zb_k})\oplus H^1_\et(\Zb_{2,k},\Q_\ell).
\]
In \S~\ref{exa15} we have seen that that ${\rm Frob}_3$ fixes the two
irreducible components $\bar{Z}_1$ and $\bar{Z}_2$ of $\bar{Z}$ and
interchanges the two singular points. Lemma \ref{cohomologylem}.(2)
therefore implies that the corresponding factor of $P_1(\bar{Z},T)$ is equal to
\[
          1+T.
\]

The second factor is  the numerator of the  zeta function of the
genus-one curve $\bar{Z}_{2}$ given by (\ref{quoIeq2}).  
Since the number of $\F_3$-rational points is
\[
      |\bar{Z}_{2}(\F_3)|=4=1+3,
\]
it follows that
\[
       P_1(\bar{Z},T) = (1+T)(1+3T^2).
\]

\subsection{}

We use our description of the stable reduction of $Y$ to
compute the exponent of the conductor of the $\Gamma_K$-representation
$H^1(Y_{\bar{K}},\Q_\ell)$. Since $Y$ achieves semistable reduction
over a tame extension of $K=\QQ_3$ it follows from Corollary 
\ref{ssredcor2} and the above calculations that
\[
f_{Y/K}=2g(Y)-\dim H^1_{\et}(\bar{Z}_k,\Q_\ell) = 14 -3 =11.
\]

\section{Example II}\label{exa2} 

As a second example we consider the curve $Y$ over $K=\QQ_2$ given by 
\[
      y^3 = f(x):=x^4-x^2+1.
\]
We will see that in this case the extension $L/\QQ_2$ over which $Y$ acquires
stable reduction  is wildly ramified.  

\subsection{}

The ramification divisor $D\subset X:=\P^1_K$ has degree $5$ and
consists of the zero set of $f$ together with $\infty$, hence
$g(Y)=3$.  As $f$ is the $12$th cyclotomic polynomial, its zero set
is $\{\pm \zeta, \pm \zeta^5\}$, where $\zeta$ is a chosen primitive
$12$th root of unity. The splitting field of $f$ is
$L_0:=\QQ_2(\zeta)$.  We set $L:=L_0(2^{1/3})$, where $2^{1/3}$ is a
$3$rd root of $2$. Since $L_0$ contains the third root of unity
$\zeta_3:=\zeta^4$, the extension $L/K$ is Galois and its Galois group
$\Gamma:=\Gal(L/K)$ is the dihedral group of order $12$. Its inertia
subgroup is $I:=\Gal(L/K(\zeta_3))$, which is the cyclic subgroup of
$\Gamma$ of order $6$. In particular, $L/K$ is wildly ramified. The
residue field $\FF_L$ of $L$ is  $\FF_4$,
and is generated over $\FF_2$ by the image $\zetab$ of $\zeta_3$.
 Assumption \ref{Lass} is satisfied, therefore 
the curve $Y_L$ has semistable reduction over $L$.

As in \S~\ref{exa12}  we find that  the special fiber $\Xb$ 
of the stable model $(\cX,\cD)$ of
$(X_L,D_L)$ looks as follows:
\[
\setlength{\unitlength}{0.7mm}
\begin{picture}(80,50)

\put(0,10){\line(1,0){60}}
\put(15,0){\line(0,1){35}}
\put(30,0){\line(0,1){35}}

\put(15,20){\circle*{2.2}}
\put(15,30){\circle*{2.2}}
\put(30,20){\circle*{2.2}}
\put(30,30){\circle*{2.2}}
\put(50,10){\circle*{2.2}}

\put(65,8){\smaller[1]$\Xb_0$}
\put(12,41){\smaller[1]$\Xb_1$}
\put(27,41){\smaller[1]$\Xb_2$}

\put(6,28.5){\smaller[2]$\zeta$}
\put(3,19){\smaller[2]$-\zeta$}
\put(35,28.5){\smaller[2]$\zeta^5$}
\put(34,19){\smaller[2]$-\zeta^5$}
\put(48,2){\smaller[2]$\infty$}

\end{picture}
\]

We may choose the parameters $x_i$ for the components $\Xb_i$ as follows
\begin{equation}\label{xidef}
x_0:=x,\qquad x_1:=\frac{x-\zeta}{2}, \qquad x_2:=\frac{x-\zeta^5}{2}.
\end{equation}
The choice of $x_1$ differs from the convention in Notation \ref{treenot} by a
unit. This  leads to slightly easier formulas afterwards.

Proposition \ref{kummerprop2} yields as Kummer equation for
$\Yb_i:=\Yb|_{\Xb_i}$:
\begin{align}
y_0&:=y,  &\yb_0^3&=\fb_0(\xb_0):=(\xb_0^2+\xb_0+1)^2,\label{Zb0eq}\\
y_1&:=2^{2/3}y, &\yb_1^3&=\fb_1(\xb_1):=\xb_1(\xb_1+\zetab),\label{Zb1eq}\\
y_2&:=2^{2/3}y,&\yb_2^3&=\fb_2(\xb_2):=\xb_2(\xb_2+\zetab^2)\label{Zb2eq}.
\end{align}
Note that $\Yb_i$ is irreducible and has genus $1$  for $i=1,2,3$

\subsection{}\label{exa23}

We now describe the action of $\Gamma=\Gal(L/K)$ on $\Xb$ and $\Yb$
and determine the quotient curve $\Zb=\Yb/\Gamma$.
 For convenience we
choose generators $\sigma, \tau$ of $\Gamma$ as follows
\begin{align}\label{Gammaeq}
\sigma(i)&=-i, &\sigma(2^{1/3})&=\zeta_32^{1/3},
&\sigma(\zeta_3)&=\zeta_3,\\
\tau(i)&=i, &\tau(2^{1/3})&=2^{1/3},
&\tau(\zeta_3)&=\zeta_3^2.
\end{align}
 Note that $\sigma$ generates $I$
and the image of $\tau$ generates $\Gammab:=\Gamma/I$.

Since $x_0=x$ and $y_0=y$ it follows that $\Gamma$ leaves these
coordinates invariant. We conclude that $\Wb_0:=\Xb_0/\Gamma$ is
isomorphic to the projective line over $\FF_2$ with parameter
$\xb_0$. Similarly, $\Zb_0:=\Yb_0/\Gamma$ is simply the $\FF_2$-model
of $\Yb_0$ given by the equation (\ref{Zb0eq}).

We describe the action of $\Gamma$ on the graph $\Delta$ of
irreducible components of $\Xb$. Since $\Gamma$ permutes the primitive
$12$th roots of unity, the components $\Xb_1$ and $\Xb_2$ are
interchanged. The choice of coordinates in (\ref{xidef}) implies that
$\psi_\tau(\xb_1)=\xb_2$, and conversely.  Since $\zeta^5=\zeta_3\cdot
\zeta$, the stabilizer $\Gamma_i$ of $\Xb_i$ is the inertia group $I$
for $i=1,2$. 

Obviously, $\Gamma$ permutes the components $\Yb_1$ and $\Yb_2$ as
well. We are reduced to computing the quotient $\Zb_1:=\Yb_1/I$. The
definition of the coordinates in (\ref{xidef}) and (\ref{Zb1eq})
implies that
\[
\psi_\sigma(\xb_1, \yb_1)=(\xb_1, \zetab \yb_1),
\]
since $(\zeta-\sigma(\zeta))/2=(\zeta-\zeta^7)/2=\zeta\equiv
\zeta_3\pmod{2}$.  Therefore $\psi_{\sigma^2}$ generates the Galois
group of $\Yb_1\to \Xb_1$ and $\Wb_1=\Yb_1/I$ is a projective line
over $\FF_4$ with coordinate $\wb_1:=\xb_1(\xb_1+\zetab).$ 

The corresponding component of $\Zb=\Yb/\Gamma$ is
$\Zb_3:=(\Zb_1\coprod\Zb_2)/\Gal(\FF_4/\FF_2)$. The $\Zb_3$ is
isomorphic to $\P^1_{\FF_4}$ considered as a curve over $\FF_2$ and
is not absolutely irreducible. Since $\Zb_1$ has
genus $0$, the curve $\Zb_3$ does not contribute to the \'etale
cohomology of $\Zb$. Since
there are no loops, the contraction map $\Zb\to\Zb_0$ induces an
isomorphism on $H^i_\et$.  

The curve  $\Zb_0=\Yb_0/\Gamma$ is the smooth curve of
genus $1$ over $\FF_2$ given by \eqref{Zb0eq} with
$|\Zb_0(\FF_2)|=3.$ 
We conclude that the zeta function of $\Zb$ is
\[
     Z(\Zb,T)=\frac{1+2T^2}{(1-T)(1-2T)}.
\]

\subsection{} \label{exawildconductor}

It remains to compute the exponent of conductor $f_{Y/K}$. Since the
extension $L/K$ is wildly ramified, Corollary \ref{ssredcor2} does not
apply and we have to use the formula of Theorem \ref{ssredthm3}. Recall that 
\[
       f_{Y/K}=\epsilon +\delta,
\]
where $\epsilon=2g_Y-\dim H^1_\et(\Zb_k,\QQ_\ell)$ and  $\delta$ is the
Swan conductor. The results from  \S~\ref{exa23} show that $\dim
H^1_\et(\Zb_k,\QQ_\ell)=2$ and therefore that $\epsilon=4$. 

Let $(\Gamma_i)_{i\geq 0}$ be the filtration of $\Gamma$ by higher
ramification groups. Then $\Gamma_0=I$ is the inertia group and $\Gamma_1=P$
is the Sylow $p$-subgroup of $I$.  In our case $I=\langle \sigma\rangle$ is
cyclic of order $6$ and  $P\subset I$ is generated by the
element $\sigma^3.$ A simple computation using (\ref{Gammaeq}) shows that
\[
   \Gamma_1=\Gamma_2=\Gamma_3=P, \quad \Gamma_4=\{1\}.
\]
Theorem \ref{ssredthm3} implies that
\begin{equation} \label{deltaeq2}
  \delta = 2(g_{\Yb}-g_{\Zb^w}),
\end{equation}
where $g_{\Yb}$ (resp.\ $g_{\Zb^w}$) is the arithmetic genus of $\Yb$ (resp.\
of the quotient curve $\Zb^w:=\Yb/P$). 

The curve $\Yb$ has genus $3$. The
computations of \S~\ref{exa23} show that the curve $\Zb^w$ is a
semistable curve over $\FF_4$ with three smooth irreducible components
$\Zb_0^w,\Zb_1^w,\Zb_2^w$, where $\Zb_1^w$ and $\Zb_2^w$ each
intersect $\Zb_0^w$ in a unique point.   The curve $\Zb_0^w$ is
canonically isomorphic to the genus-one curve $\Yb_0$ (since $I$ acts
trivially on $\Yb_0$), while $\Zb_1^w$ and $\Zb_2^w$ are curves of
genus zero. We conclude that
$g({\Zb^w})=1$, and hence $\delta=4$ by \eqref{deltaeq2}. All in all we obtain
\[
     f_{Y/K} = \epsilon+\delta=4+4=8.
\]

\bigskip\noindent {\em Acknowledgment} We would like to thank Tim Dokchitser
for suggesting the problem motivating this paper and for many helpful
conversations and useful comments. We also want to thank him for inviting
us to Bristol, where some parts of this paper were written. Furthermore, we
would like to thank Qing Liu for a helpful conversation on the proof of Theorem
\ref{ssredthm2}, and the referee for the detailed report.

\vspace{5ex}\noindent {\small Irene Bouw, Stefan Wewers\\ Institut
  f\"ur Reine Mathematik\\ Universit\"at
  Ulm\\ Helmholtzstr.\ 18\\ 89081 Ulm\\ {\tt irene.bouw@uni-ulm.de,
    stefan.wewers@uni-ulm.de}}

\end{document}